\documentclass[usletter]{amsart} 

\usepackage{amsmath,amsthm,amssymb,amsfonts,mathrsfs,color,hyperref, mathtools,crop, graphicx, enumitem, todonotes,subcaption}
\usepackage[left = 3cm, right = 3cm]{geometry}

\theoremstyle{plain}
\begingroup
\newtheorem{theorem}{Theorem}[section]
\newtheorem*{theorem*}{Theorem}
\newtheorem*{"theorem"}{``Theorem''}
\newtheorem{corollary}[theorem]{Corollary}
\newtheorem{conjecture}[theorem]{Conjecture}

\newtheorem{lemma}[theorem]{Lemma}
\endgroup

\theoremstyle{definition}
\begingroup
\newtheorem{definition}[theorem]{Definition}
\endgroup

\theoremstyle{remark}
\begingroup

\newtheorem{example}[theorem]{Example}
\endgroup 

\numberwithin{equation}{section}
\newenvironment{pde}{\left\{\begin{array}{rll} } {\end{array}\right.}

\newcommand{\N}{\mathbb N} 
 
\newcommand{\Z}{\mathbb Z} 
\newcommand{\R}{\mathbb R}

\newcommand{\sign}{\mathrm{sign}}

\newcommand{\M}{{\mathcal M}}

\renewcommand{\L}{{\mathcal L}}

\newcommand{\C}{{\mathcal C}}
\newcommand{\F}{{\mathcal F}}

\newcommand{\LRa} {\Leftrightarrow}
\newcommand{\Ra} {\Rightarrow}
\newcommand{\wto}{\rightharpoonup}
\newcommand{\embeds}{\xhookrightarrow{\quad}}

\renewcommand{\d}{\mathrm{d}}

\newcommand{\dx}{\,\mathrm{d}x}

\newcommand{\ds}{\,\mathrm{d}s}
\newcommand{\dt}{\,\mathrm{d}t}

\newcommand{\eps}{\varepsilon}
\newcommand{\average}{{\mathchoice {\kern1ex\vcenter{\hrule height.4pt
width 6pt depth0pt} \kern-9.7pt} {\kern1ex\vcenter{\hrule
height.4pt width 4.3pt depth0pt} \kern-7pt} {} {} }}

\allowdisplaybreaks

 \makeatletter
\@namedef{subjclassname@2020}{2020 Mathematics Subject Classification}
\makeatother

\DeclareMathOperator*{\argmin}{argmin}

\begin{document}

\title[Overparametrized polynomial interpolation]{The double descent and Runge phenomena in overparametrized polynomial interpolation}

\author{Jason Wein}
\address{Jason Wein\\
Department of Mathematics\\
University of Pittsburgh\\
Pittsburgh, PA 15213}
\email{JSW116@pitt.edu}

\author{Stephan Wojtowytsch}
\address{Stephan Wojtowytsch\\
Department of Mathematics\\
University of Pittsburgh\\
Pittsburgh, PA 15213}
\email{s.woj@pitt.edu}

\date{\today}

\subjclass[2020]{41A10, 62J07}
\keywords{Overparametrization, regularization, double descent, polynomial interpolation, Runge phenomenon}

\begin{abstract}
The Runge phenomenon in polynomial interpolation is often considered a classical analogue of the double descent phenomenon in machine learning. In this note, we explore overparameterized polynomial interpolation in three popular polynomial bases: Monomial, Chebyshev and Legendre basis with coefficients that are minimal in the $\ell^2$-norm (and, for the monomial basis, also those minimal in the $\ell^1$-norm). We present our results primarily for equidistant and Chebyshev data points, but many results are independent of the exact form of sampling.
\end{abstract}

\maketitle


\section{Introduction}

Modern deep learning employs function models whose number of tunable parameters may exceed the number of available data points by several orders of magnitude. This overparametrization is crucial for the trainability of neural networks as the loss landscape of smaller networks is often plagued by the presence of spurious local minima \cite{safran2018spurious, chizat2018global, wojtowytsch2020convergence, chizat2020implicit, jacot2018neural, weinan2020comparative}.

From the perspective of statistical generalization on the other hand, highly expressive function models require subtle distinctions. For instance, neural networks of sufficient size are able to fit any collection of prescribed values $y_i$ not only at the known data points $x_i$, $i=1,\dots,n$, but also a number of additional points $x_{n+1}, \dots, x_{n+N}$. With heavy overparametrization, $N$ may even be significantly larger than $n$. Depending on the choice of $y_{n+1}, \dots, y_{n+N}$, a model trained on the augmented set $\{(x_i, y_i) : i = 1, \dots, n+N\}$ may generalize well to previously unseen data (in the sense of the original problem), or it may output meaningless noise. Thus, there are both `good' empirical risk minimizers and bad ones over the original set $\{(x_i, y_i) : i =1,\dots, n\}$.

In the spirit of structured risk minimization, it has been shown in various settings that minimizers of regularized risk functionals composed of a data fidelity term and a Tikhonov regularizer generalize well to new data if the labels are generated $y_i = f^*(x_i)$ with a target function $f^*$ in a suitable function class \cite{weinan2018priori}. As the number of labeled data points approaches infinity, these solutions approach minimum norm interpolants (in the target function space) also outside of the support of the data distribution \cite{park2023minimum}. Even when relying on implicit regularization by a training algorithm, ERMs often generalize if a `flat' minimum (in the parametrization loss landscape) is found, but the generalization gap may be subject to the curse of dimensionality \cite{liang2026stable}.

Despite classical statistical guidance that suggests avoiding models which can overfit noisy data (or adding a regularizer of positive strength), it has been overparametrized functions in fact generalize better than underparametrized functions in a variety of settings, assuming that the correct `minimum norm' interpolating solution is selected using (explicit or implicit) regularization. While the data is fit exactly (which can lead to `overfitting' the noise), any overfitting is `benign' and does not degrade generalization performance. The performance outside the training dataset tends to be worst in an intermediate regime where it is possible to exactly fit the labeled data, but the interpolating solution is unique. Independent of prior works \cite{loog2020brief}, this {\em double descent phenomenon} (where population loss falls on both sides of the interpolating threshold) was popularized by \cite{belkin2019reconciling, belkin2020two}.

Double descent is often likened to the {\em Runge phenomenon} in numerical analysis where the unique polynomials $P_{n,f}$ of degree $n$ which interpolate a target function $f$ at the $n+1$ equidistant data points $x_0, \dots, x_n$ in an interval $[a,b]$ satisfy $\lim_{n\to\infty}\|f-P_n\|_{L^\infty(a,b)} = +\infty$. The phenomenon occurs even for innocuous functions such as $f(x) = 1/(25x^2+1)$ in the interval $[-1,1]$ \cite{epperson1987runge, stackexchange_runge}. We can think of the Runge function as an entirely deterministic mechanism for overfitting.

In the classical Runge setting, the number of parameters is tethered precisely to the interpolation threshold. If the polynomial degree goes to infinity at a rate $d_n \ll n$ and the least squares solution $P_{d_n, n} = \argmin_{P\in \mathcal P_{d_n}} \frac1n\sum_{i=1}^n |P(x_i) - f(x_i)|^2$ is selected, the phenomenon is not observed. The question addressed in this note is: {\em In the simple setting of polynomial interpolation in one dimension, is there a double-descent phenomenon as $d_n\gg n$? And how does this depend on the selection mechanism for overparametrized solutions?}

Following machine learning practice, we consider regularizers which operate on {\em parameter space}, not the function space directly. More precisely, we select a basis $\mathcal Q_d = \{q_0, \dots, q_d\}$ for the space of polynomials $\mathcal P_d$ of degree at most $ d$ and select a regularizer $R:\R^{d+1}\to [0, \infty)$ like $R(a) = \|a\|_p^p$ for the represented function $P_{\mathcal Q,a}= \sum_{k=0}^d a_k q_k$. Given a dataset $\mathcal X = \{(x_i, y_i): i=0,\dots, n\}$, the polynomial under consideration is $P_{\mathcal Q_d, a^*}$ where
\[
 a^* \in \argmin \left\{R(a) : a\in\R^{d+1} \text{ such that } P_{\mathcal Q_d, a}(x_i) = y_i\text{ for all } i = 0,\dots, n\right\}.
\]
The set is guaranteed to be non-empty if $d\geq n$ (and the points $x_i$ are distinct) and the minimizer is unique if $R(a) = \|a\|_2^2$, but not necessarily if $R(a) = \|a\|_1$.

In this note, we focus primarily on two settings:

\begin{enumerate}
\item $\mathcal Q$ is the monomial basis $q_k(x) = x^k$ and $R(a) = \|a\|_1$ is an $\ell^1$-regularizer.

\item $\mathcal Q$ is the basis of Chebyshev polynomials and $R(a) = \|a\|_2^2$ is an $\ell^2$-regularizer.
\end{enumerate}

Partial results are also obtained for $\ell^2$-minimal coefficient interpolants in monomial basis and Legendre basis. Noted in \cite{ongie2026representation}, in the overparametrized setting, common parameter space regularizers correspond to a norm (or at least quasi-norm) regularizer on a function space. This is also the case for us, in two very different ways.

Consider the Chebyshev basis first. As Chebyshev polynomials are (essentially) orthonormal in $L^2(\nu)$ for a suitable measure $\nu$ on $(-1,1)$, the parameter space regularizer corresponds to a function space regularizer as $d\to\infty$ with
\[
P_{d} = \argmin_{P\in\mathcal P_d} \left\{ \int_{-1}^1 \frac{P(x)^2}{\sqrt{1-x^2}}\dx + \left(\int_{-1}^1 \frac{P(x)}{\sqrt{1-x^2}}\dx\right)^2 : P(x_i) = y_i\text{ for all } i=0,\dots, n\right\}.
\]
If $d$ is large enough compared to the number of data points, we observe that it is possible to fit a given collection of input/output pairs $\{(x_i, y_i) : i=0,\dots, n\}$ by a polynomial of vanishingly small $L^2(\nu)$-norm. Consequently $\lim_{d\to \infty} P_d = 0$ in $L^2(-1,1)$ and in the weighted $L^2$-space in which the Chebyshev basis is orthogonal. Immediately, we find that the population risk remains bounded as $d\to \infty$ and
\[
\lim_{d\to \infty} \|P_{d,\mathcal X} - f\|_{L^2(-1,1)}^2 = \|f\|_{L^2(-1,1)}^2.
\]
If the points $x_i$ are equidistant in $(-1,1)$ and the labels are given by $y_i = f(x_i)$ for a function $f$ which suffers from the Runge phenomenon, this shows that a descent exists towards heavy overparametrization, but with a suboptimal limiting value: Even the zeroth-degree polynomial (constant) approximation $P_0 = \frac1n\sum_{i=1}^nf(x_i)$ typically does better at approximating $f$ than the high degree limit (i.e.\ the constant zero function) if $\int_{-1}^1f(x)\dx \neq 0$. 

Closer to practice, we observe the non-commutation of limits as both the number $n$ of data points $(x_i, f(x_i))$ and the polynomial degree $d$ approach infinity:
\[
0 = \lim_{d\to\infty} \lim_{n\to\infty} \|f-P_{d,n}\|_{L^2(-1,1)} \neq \lim_{n\to\infty}\lim_{d\to\infty}\|f-P_{d,n}\|_{L^2(-1,1)} = \|f\|_{L^2(-1,1)},
\]
for instance if the points $x_i$ are equidistant or distributed independently according to the uniform distribution on $(-1,1)$. This is easy to see as $\lim_{n\to\infty} P_{d,n}$ is simply the $L^2$-orthogonal projection of $f$ onto the space $\mathcal P_d$ of degree $d$ polynomials, independently of the choice of basis for $\mathcal P_d$.

We conjecture that the situation of the Legendre basis is similar, but significantly more subtle, and we do not fully settle the question here. The difference arises as Legendre polynomials form an orthogonal basis whose norm approaches zero at the rate $\|p_d\|^2 = O(1/d)$, which vanishes just slowly enough to prevent effective regularization. We conjecture that in Legendre basis, the $\ell^2$-regularizer on the basis coefficients acts as an $H^s$-type regularizer for the critical case $s=1/2$ which just fails to guarantee continuity. For even large finite degree, empirical regularization is observed, but we conjecture that the large degree limit degenerates in the same way as for Chebyshev polynomials albeit at a slower rate.

In contrast, the monomial basis is possibly most intuitively viewed through the lens of complex analysis. Specifically, we note that if $(a_i)_{i\in \mathbb N_0}$ is an absolutely summable complex sequence, then the power series $f(x) = \sum_{i=0}^\infty a_iz^i$ converges absolutely and uniformly on the unit disk since
\[
\left|\sum_{k=0}^D a_k z^k - \sum_{i=0}^d a_kz^k\right| \leq \sum_{k= \min\{d,D\}+1}^{\max\{d,D\}} |a_k| \,|z|^k \leq \sum_{k= \min\{d,D\}}^\infty |a_k| \qquad \forall\ z\in \overline{B_1(0)}\subset \mathbb C.
\]
In particular, $f$ is a continuous function on the closed unit disk of the complex plane (and holomorphic in its interior). This indicates a bias which is very different from that of the Chebyshev basis and tied to analytic function representation. It is less compatible with an $L^p$- or $C^k$-perspective since the map $a\mapsto \sum_i a_iz^i$ is continuous, but its inverse is not with respect to either topology: two functions may be very close with very different power series, at least when only data on the real axis is observed.

For instance, if $f:[0,1]\to\R$ is a continuous functions satisfying $f(0) = 0$, then we can extend $f$ to $[-1,1]$ by even reflection as $f_e$ or odd reflection as $f_o$. Approximating $f_e, f_o$ by polynomials $P_e, P_o$ in $C^0([-1,1])$ by the Stone-Weierstrass Theorem, we may assume that $P_e$ is even and $P_o$ is odd since
\begin{align*}
\left| f_e - \frac{P_e(x) + P_e(-x)}2 \right| &= \left| \frac{f_e(x) + f_e(-x)}2 - \frac{P_e(x) + P_e(-x)}2 \right| \leq \frac{|f_e(x) - P_e(x)| + |f_e(-x) - P_e(-x)|}2\\
    &\leq \frac{2\,\|f_e-P_e\|_{C^0([-1,1])}}2 = \|f_e-P_e\|_{C^0([-1,1])},
\end{align*}
i.e.\ the even part of $P_e$ approximates the even function $f_e$ at least as well as the generic polynomial $P_e$, and analogously for $P_o, f_o$. Thus, both $P_e$ and $P_o$ approximate $f$ well on the original interval $[0,1]$, but their coefficients $a_{k, e}$ and $a_{k, 0}$ are entirely different: $a_{k,e} =0$ if $i \in 2\mathbb N +1$ and $a_{k, o} = 0$ if $i\in 2\mathbb N$. By integration, the same discrepancy can be achieved approximating $f$ in $C^m([0,1])$ for any $m\in \mathbb N_0$ if $f$ has $m$ continuous derivatives such that we simply approximate $f^{(m)}$.

To understand the selection principle and approximation properties in the monomial basis, we therefore focus on data $y_i = f(x_i)$ for functions which have a convergent power series representation $f(x) = \sum_{k=0}^\infty a_kx^k$. Eschewing the complex plane and using entirely real analytic methods, we demonstrate that heavily overparametrized interpolating polynomials with $\ell^1$-minimal monomial basis coefficients converge rapidly to the target function also between data points if the power series coefficients of $f$ are real and non-negative. For (real or complex) $\ell^2$-minimal coefficients in the monomial basis, we draw connections to the Hardy space $H^2$ of harmonic analysis.

This note illustrates the delicate behavior in a popular introductory example for overparametrization, bias/complexity trade-off and double descent \cite[Section 15.1]{petersen2024mathematical}, \cite[Section 1.3]{ma2020towards}. It matches subtleties in deep learning, where even the choice of penalizing the biases in addition to the weights of a neural network can have significant impact \cite{boursier2023penalising, wojtowytsch2024optimal}, and it matches the observation that stable models often offer generally mediocre performance while less stable models may perform spectacularly well on a smaller set of compatible problems, but not necessarily generic problems.

The article is organized as follows. In the remainder of the introduction, we review classical material on the Runge phenomenon. In Section \ref{section monomial ell1}, we study minimum norm interpolants with respect to the $\ell^1$-norm of monomial basis coefficients. Section \ref{section chebyshev} is dedicated to the study of interpolants with $\ell^2$-minimal coefficients for the basis of Chebyshev polynomials of the first kind (or, more generally, suitable orthogonal bases in general). In Section \ref{section others}, we provide a partial analysis of interpolants with $\ell^2$-minimal coefficients in the monomial basis and the Legendre basis.

To conclude the introduction, let us note that we do not prove that overparametrized interpolants outperform the Lagrange polynomial $P\in\mathcal P_n$, i.e.\ the unique degree $n$ polynomial which interpolates all given data $\{(x_i, y_i) : i=0,\dots, n\}$, in situations when it converges (although we empirically observe an instance of such behavior in Figure \ref{figure monomial l1 and l2}). Rather, our results prove that the quality of solutions does not deteriorate (monomial basis, $\ell^1$-minimal coefficients, and in a restrictive class of target functions) or that the solutions approach the zero function and thus at least do not blow up (Chebyshev basis, $\ell^2$-minimal coefficients). The goal of the article is not to develop new techniques in polynomial interpolation, but rather to explore and explain the nuanced behavior of an example which is often presented as introductory, but remains poorly understood.

In a companion article \cite{second_article}, we illustrate that a double descent phenomenon does arise when we select the solution of least coefficient space $\ell^2$-norm with respect to a scaled Chebyshev basis
\[
q_{k, s}(x) = \frac{T_k(x)}{\sqrt{k^{2s}+1}}, \qquad\text{where }T_k(x) = \cos\big(k\,\arccos(x)\big)
\]
to interpolate the Runge type function $f(x) = 1/((6x)^2+1)$ based on equidistant data points in $[-1,1]$ -- see also Figure \ref{figure second project}. As the technical details are quite distinct, we focus on standard polynomial bases here.

\begin{figure}
\centering
\includegraphics[width = .8\textwidth]{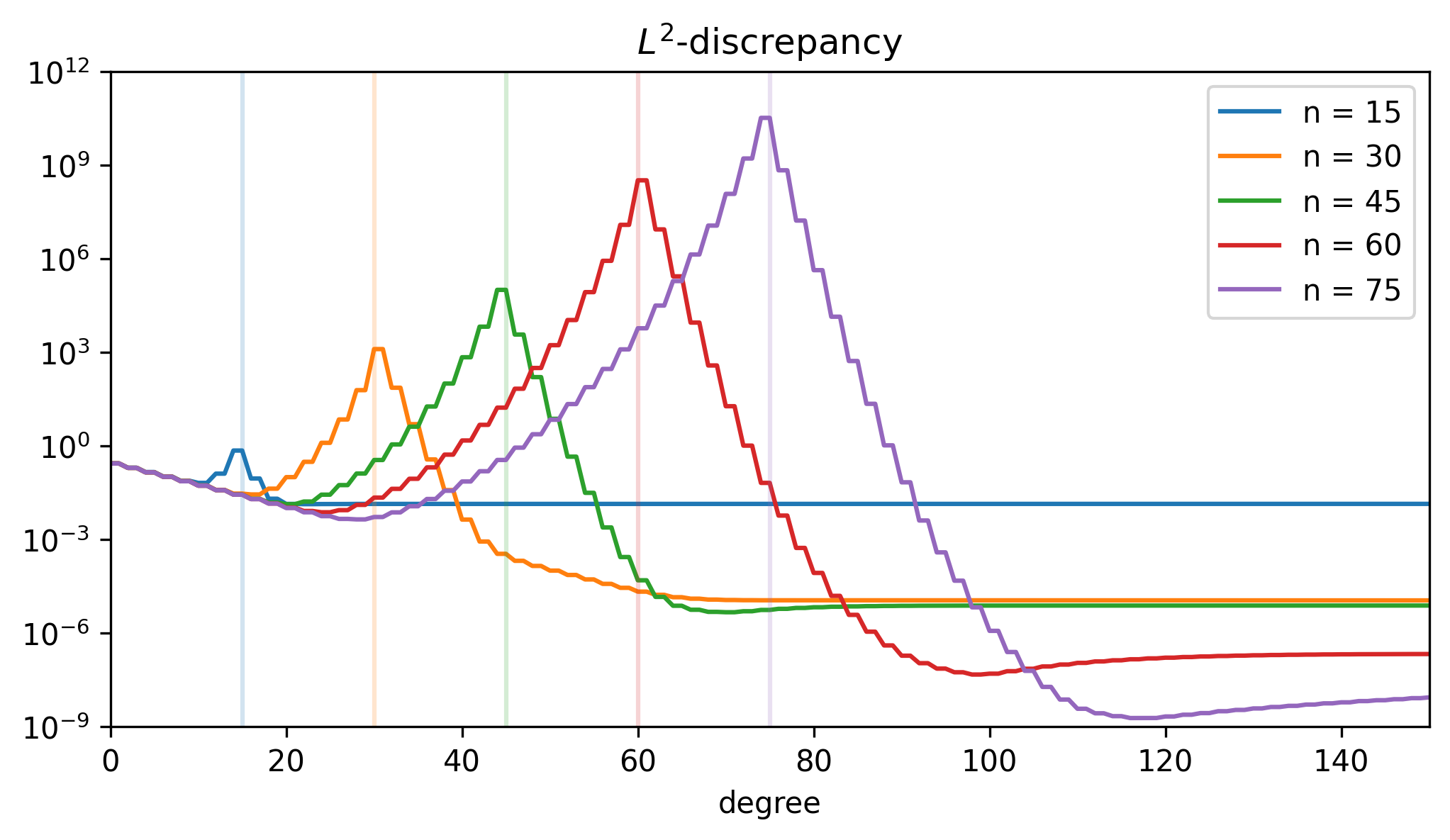}
\caption{\label{figure second project}
$L^2(-1,1)$-discrepancy between the target function $f(x) = (36x^2+1)^{-1}$ and a least squares fit polynomial (underparametrized case) and the unique polynomial which minimizes the $\ell^2$-coefficient norm with respect to a suitably scaled Chebyshev polynomial basis. The vertical line indicates the interpolation threshold for the plot of the corresponding color. For details, see \cite{second_article}.
}
\end{figure}

\subsection{The Runge phenomenon}

For $n+1$ distinct points $\mathcal X_n = \{x_0, \dots, x_n\}$ in a fixed interval $[a,b]$ and a function $f:[a,b]\to\R$, there always exists a unique polynomial $P_n$ of degree $n$ such that $P_n(x_i) = f(x_i)$ for $i= 0,\dots, n$ (the `Lagrange polynomial'). In between the data points, $P$ may or may not be close to $f$, and Faber's Theorem \cite{faber1914interpolatorische, trefethen2019approximation} guarantees that there is no family of finite sets $\mathcal X_n$ such that the Lagrange polynomial converges to $f$ for all continuous functions $f$.

A particularly important instance is that of equidistant grids $\mathcal X_n = \{0, 1/n, \dots, (n-1)/n, 1\}$. Bernstein showed that Lagrange polynomials for $f(x) = |x-1/2|$ based on equi-distant points diverge everywhere except at $1/2$ and the interval ends -- see \cite{revers2000lagrange} for extensions and historic context. The `Runge phenomenon' asserts that $P_n$ does not converge to $f$ for functions which are real analytic on the interval \cite{epperson1987runge}. Convergence guarantees are only available for analytic functions in suitable neighborhoods of the interval \cite{epperson1987runge}.

For interpolants at Chebyshev nodes, on the other hand, convergence is guaranteed, for instance, for all one-dimensional `Barron functions' $f$ (see \cite[Section 4.1]{wojtowytsch2022representation} for context), i.e.\ all continuously differentiable functions whose derivative is of bounded variation \cite[Chapter 7]{trefethen2019approximation}. In particular, this includes all functions which are at least $C^{1,1}$-smooth. 
 

\subsection{Notation}

If $k_1, k_2\in\mathbb N_0$, we denote $[k_1:k_2] = \{k_1, k_1 +1, \dots, k_2\} = [k_1,k_2]\cap \mathbb Z$.

\section{Monomial Basis Regularization} \label{section monomial ell1}

In this section, we demonstrate that polynomials $P_d(x) = \sum_{k=0}^d b_{k,d}x^k$ of high degree $d$ which interpolate data $y_i = f(x_i)$ are close to $f$ (more precise statement below) if 
\begin{enumerate}
\item the target function $f$ belongs to the restrictive function class of {\em absolutely monotonic} and analytic functions,
\item we select $P_d$ such that the coefficient norm $\|b_d\|_{\ell^1} = \sum_{k=0}^d |b_{k,d}|$ is minimized, and
\item $x_i\geq 0$ for all $i$ and $1\in \{x_0, \dots, x_n\}$ (or the data is `balanced').
\end{enumerate}
The final condition arises as any derivative of a polynomial or power series with non-negative coefficients is non-negative on $[0,\infty)$, but not necessarily on $\R$.
Our main result is the following.

\begin{theorem} \label{main theorem}
Let $0 = x_0 < x_1 < \dots < x_n = 1$ such that $x_i - x_{i-1} < K/n$ for some $K\geq 1$ and all $i \in [1:n]$. Assume that $f(x) = \sum_{k=0}^\infty a_kx^k$ is a power series which converges on $[0,1]$ such that $a\in \ell^1$ and $a_k\geq 0$ for all $k\in\mathbb N.$

For $d\geq n$, the `interpolating parameter family'  
\[
\mathcal F_d :=\left\{ b\in \R^{d+1} : \sum_{k=0}^d b_k x_i^k = f(x_i)\ \text{ for } i \in [0:n]\right\},
\qquad \M_d = \argmin_{b\in \F_d}\|b\|_1
\]
is non-empty and if we denote $P_b(x) = \sum_{k=0}^d b_kx^k$ for $b\in\F_d$, then
\begin{enumerate}
\item $\lim_{d\to\infty} \sup_{P_b\in\M_d}\|f-P\|_{L^1(0,1)} \leq K\frac{f(b) - f(a)}n$.
\item If $f^{(m)}(1)<\infty$ for $m \leq n$, then
\[
\lim_{d\to\infty} \sup_{b\in\M_d}\|f-P_b\|_{L^\infty(0,1)} \leq 2\,f^{(m)}(1) \left(\frac Kn\right)^m.
\]
\item In general, if $\zeta = x_J \in (0,1)$ and $J\geq m$, then
\[
\lim_{d\to\infty} \sup_{b\in\M_d}\|f-P_b\|_{L^\infty(0,\zeta)} \leq \frac{2\,m!\,f(1)}{(1-\zeta)^{m+1}}\, \left(\frac Kn\right)^m.
\]
\end{enumerate}
The same bounds hold if $\mathcal X:= \{x_0,\dots, x_n\}$ contains $0, 1$ and is {\em balanced}, i.e.\ $x\in\mathcal X\ \Ra -x\in\mathcal X$. In this case, $K\geq 2$ on the larger interval $[-1,1]$ and we consider $(-\zeta,\zeta)$ instead of $(0,\zeta)$.
\end{theorem}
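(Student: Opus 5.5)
The plan is to reduce to comparing $f$ with an interpolant whose coefficients are asymptotically non-negative, and then to use only properties of absolutely monotonic functions. Non-emptiness of $\F_d$ for $d\ge n$ is classical: pad the Lagrange polynomial with zero coefficients.

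\emph{Step 1: competitor and norm bound.} For $d>n$, let $c^{(d)}$ be the truncation $(a_0,\dots,a_d)$ plus the monomial coefficients of the Lagrange polynomial $L_d\in\mathcal P_n$ that interpolates the residuals $r_i=\sum_{k>d}a_kx_i^k$. Then $c^{(d)}\in\F_d$. Since $n$ and the nodes are fixed, the map from data values to the monomial coefficients of the degree-$n$ interpolant is a fixed linear map, with norm $C_n$ say. Hence $\|c^{(d)}\|_1\le \sum_{k\le d}a_k + C_n\sum_{k>d}a_k\to \|a\|_1=f(1)$. Every $b\in\M_d$ therefore satisfies $\limsup_d\|b\|_1\le f(1)$.

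\emph{Step 2: asymptotic non-negativity.} Since $1=x_n$ is a node, $\sum_k b_k=P_b(1)=f(1)$. Writing $b=b^+-b^-$, we get $\|b^-\|_1=\tfrac12(\|b\|_1-f(1))\to0$ uniformly over $\M_d$. Put $P_\pm=P_{b^\pm}$. Then $\|P_-\|_{L^\infty(0,1)}\le\|b^-\|_1\to0$, so it suffices to bound $f-P_+$. Here $P_+$ is absolutely monotonic, satisfies $P_+(x_i)=f(x_i)+\eps_i$ with $|\eps_i|\le\|b^-\|_1\to0$, and has $P_+(1)\to f(1)$.

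\emph{Step 3: the three estimates.} For (1): on each $[x_{i-1},x_i]$, both $f$ and $P_+$ are nondecreasing and nearly agree at the endpoints. Hence $|f-P_+|\le f(x_i)-f(x_{i-1})+o(1)$ there. Integrating and using $x_i-x_{i-1}<K/n$ telescopes to $K(f(1)-f(0))/n$ (the statement's $f(b)-f(a)$ with $a=0$, $b=1$).

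For (3): first I prove the Cauchy-type bound $h^{(m)}(x)\le m!\,h(1)/(1-x)^{m+1}$ for any absolutely monotonic $h=\sum c_kx^k$ on $[0,1)$. It follows from $c_k\le h(1)$ and $\sum_k\binom km x^{k-m}=(1-x)^{-m-1}$. Apply it to $f$ and to $P_+$, using $P_+(1)\to f(1)$. For $x\in[0,\zeta]$, pick $m$ consecutive nodes in $[0,\zeta]$ near $x$; this is possible because $J\ge m$. The standard interpolation remainder $h(x)-I_mh(x)=h^{(m)}(\xi)\prod_j(x-x_j)/m!$ applies to $h=f$ and $h=P_+$. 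Their degree-$(m-1)$ interpolants agree up to $o(1)$, since the data agree up to $o(1)$. This gives $|f-P_+|\le 2\sup_{[0,\zeta]}h^{(m)}\,\prod_j|x-x_j|/m!$. The node product must then be bounded by $m!(K/n)^m$ by choosing the $m$ nodes adjacent to $x$, so that the $j$-th nearest lies within $jK/n$.

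For (2): run the same remainder argument, but bound $h^{(m)}$ on $[0,1]$ by $h^{(m)}(1)$, which is valid by monotonicity of $h^{(m)}$. For $f$ this is $f^{(m)}(1)$ directly. For $P_+$, I would control $P_+^{(m)}$ through $m$-th divided differences at nodes. These coincide with those of $f$ up to $o(1)$ and are at most $f^{(m)}(1)/m!$.

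\emph{Main obstacle.} I expect Step 3(2) to be the hardest part, namely bounding $P_+^{(m)}$ near $x=1$. A divided difference only bounds $P_+^{(m)}$ from below at the right end of its stencil, not from above. I would first try to exploit that $P_+$ is a nearly non-negative combination whose total mass $\sum b_k^+$ matches $f(1)$. If that fails, I would try to compare the coefficient distributions via the node constraints closest to $1$, accepting possibly a worse constant.

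\emph{Balanced case.} Split $f$ into its even and odd parts, which are again power series with non-negative coefficients. The $\ell^1$-minimization on a symmetric node set decouples into an even and an odd problem, each evaluated on $\mathcal X\cap[0,1]$. Applying the above to each, with spacing now $K/n$ relative to $n\to n/2$ (hence $K\ge2$), yields the same bounds on $(-\zeta,\zeta)$.
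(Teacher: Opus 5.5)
Your Steps 1--2 are a sound and somewhat more elementary substitute for the paper's route. The paper uses Lemma \ref{existence lemma} to produce an interpolant with non-negative coefficients in some $\mathcal P_{d_0}$. Hence for $d\ge d_0$ one has $\min_{\F_d}\|b\|_1=f(1)$ exactly, and \emph{every} minimizer has non-negative coefficients. You only get $\|b^-\|_1\to0$, but you avoid the generalized Vandermonde argument. Parts (1) and (3) then go through as you describe. In (3), the Cauchy-type bound applied to $P_+$, together with $P_+(1)\to f(1)$, is exactly what is needed.

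Part (2), however, has a genuine gap. It cannot be closed along the lines you suggest, because the quantity you want to bound is unbounded. Take $f(x)=x^2$, nodes $0,\frac12,1$ (so $n=m=2$), and $g_d(x)=p_dx+(1-p_d)x^d$ with $p_d=(\frac14-2^{-d})/(\frac12-2^{-d})$. Then $g_d$ interpolates $f$ at the nodes and its non-negative coefficients sum to $f(1)$, so its coefficient vector lies in $\M_d$. Yet $g_d''(1)=(1-p_d)\,d(d-1)\to\infty$. So no upper bound on $P_+^{(m)}$ near $x=1$ that is uniform over $\M_d$ exists, whether via divided differences, total mass, or coefficient comparison. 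The remainder formula $h-I_mh=h^{(m)}(\xi)\prod_j(x-x_j)/m!$ applied to $h=P_+$ is simply too lossy there.

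The missing idea is to use only the \emph{sign} of $P_+^{(m)}$, as in Lemma \ref{lemma sign lagrange polynomials}. For $x\in[x_j,x_{j+m}]$, take the two stencils $I=\{j,\dots,j+m-1\}$ and $I'=\{j+1,\dots,j+m\}$. Since $Q_I(x)$ and $Q_{I'}(x)$ have opposite signs, any function with non-negative $m$-th derivative lies, at $x$, between its degree-$(m-1)$ interpolants on $I$ and on $I'$. Apply this to $f$ and to $P_+$; the interpolants of $P_+$ differ from $P_I,P_{I'}$ by $O(\|b^-\|_1)$, since $n$ is fixed. This gives $|f-P_+|(x)\le|P_I-P_{I'}|(x)+o(1)\le|f-P_I|(x)+|f-P_{I'}|(x)+o(1)$, and here only $f^{(m)}\le f^{(m)}(1)$ enters. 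Combined with the node-product bound $m!(K/n)^m$ you already have, this yields $2f^{(m)}(1)(K/n)^m$. This sandwich is how the paper proves the $L^\infty$ estimates, and it also gives (3) without any estimate on $P_+^{(m)}$.
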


In other words, if an $\ell^1$-minimal solution (with respect to monomial basis coefficients) in the interpolating set is selected, then it is quantitatively close to the function $f$. The result could be stated more generally on an interval $[a,b]\subseteq[0,\infty)$ or $[-a,a]$. Using this, we can easily compare the overparameterized to the underparameterized setting.

\begin{corollary} \label{comparison corollary}
Let $[a,b]\subset [0,\infty)$ and $a = x_0 < x_1 < \dots < x_n = b$ with $1\in \mathcal X_n := \{x_0, \dots, x_n\}$. Assume that $K\geq b-a$ such that $x_{i}-x_{i-1}\leq \frac{K}{n}$ for all $i\in\{1,\ldots, n\}.$ 

Let $f(x) = \sum_{k=0}^\infty a_kx^k$ be an analytic function with convergent power series on $[a,b]$ and non-negative power series coefficients $a_k\geq 0$. Denote by $P_d$

\begin{enumerate}
\item the unique polynomial $P\in \mathcal P_d$ which minimizes the mean squared error $P\mapsto \frac1{n+1}\sum_{i=0}^n \big|P(x_i) - f(x_i)\big|^2$ over the dataset if $d\leq n$ and
\item any polynomial $P(x) = \sum_{k=0}^d b_kx^k$ of degree at most $d$ such that $P(x_i) = f(x_i)$ for all $i=0, \dots, n$ and 
\[
\|b\|_{\ell^1} \in \argmin \left\{\|c\|_{\ell^1} : \sum_{k=0}^d c_k x_i^k = f(x_i) \:\:\forall\ i = 1,\dots, n\right\}
\]
if $d\geq n$.
\end{enumerate}
If $f$ is not constant and
\[
n_0 = K\,\frac{\big(f(b) - f(a)\big)^2}{\int_a^b \big(f- \langle f\rangle\big)^2\dx} \qquad\text{where}\quad  \langle f\rangle = \frac1{b-a}\int_a^bf(x)\dx
\]
then $\limsup_{d\to\infty}\|P_d - f\|_{L^2(a,b)} < \|P_0 - f\|_{L^2(a,b)}$ for all $n> n_0$.  
\end{corollary}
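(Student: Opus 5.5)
We compare the two sides separately, using Theorem \ref{main theorem} for the overparametrized side and an elementary computation for $P_0$.

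\emph{The constant fit.} For $d=0$ the least-squares polynomial is the discrete mean $P_0 = \frac1{n+1}\sum_{i=0}^n f(x_i)$. Since $\langle f\rangle$ minimizes $c\mapsto\|f-c\|_{L^2(a,b)}$, we get
\[
\|P_0-f\|_{L^2(a,b)}^2 = \int_a^b \big(f-\langle f\rangle\big)^2\dx + (b-a)\big(P_0-\langle f\rangle\big)^2 \geq \int_a^b\big(f-\langle f\rangle\big)^2\dx =: V,
\]
and $V>0$ because $f$ is non-constant. So it suffices to show $\limsup_{d\to\infty}\|P_d-f\|_{L^2(a,b)}^2 < V$ for $n>n_0$.

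\emph{The overparametrized fit.} We apply Theorem \ref{main theorem} in its general-interval form on $[a,b]\subset[0,\infty)$; the condition $1\in\mathcal X_n$ replaces the normalization $x_n=1$. The aim is to convert the $L^1$ bound of part (1) into an $L^2$ bound, so we also need uniform control. Since $f$ has non-negative coefficients, it is nondecreasing on $[0,\infty)$. Reexamining the proof of part (1), we expect the following: on each cell $[x_{i-1},x_i]$, every $\ell^1$-minimal interpolant $P_d$ is asymptotically monotone (it has non-negative coefficients in the limit). Since $P_d$ agrees with $f$ at both endpoints of the cell, both $f$ and $P_d$ then take values in $[f(x_{i-1}),f(x_i)]$ there, and hence
\[
|f-P_d|\leq f(x_i)-f(x_{i-1}) \quad\text{on } [x_{i-1},x_i].
\]
This gives, in the limit $d\to\infty$,
\[
\|f-P_d\|_{L^2}^2 \leq \sum_i (x_i-x_{i-1})\big(f(x_i)-f(x_{i-1})\big)^2 \leq \frac Kn\Big(\sum_i \big(f(x_i)-f(x_{i-1})\big)\Big)^2 = \frac Kn\big(f(b)-f(a)\big)^2 .
\]
Alternatively, one can interpolate between the $L^1$ bound of part (1) and the cellwise $L^\infty$ bound $\max_i\big(f(x_i)-f(x_{i-1})\big)\leq f(b)-f(a)$. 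This yields exactly $\frac Kn\big(f(b)-f(a)\big)^2$ again, via $\|g\|_2^2\leq\|g\|_1\|g\|_\infty$.

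\emph{Conclusion.} For $n>n_0$ we have $\frac Kn\big(f(b)-f(a)\big)^2 < V \leq \|P_0-f\|_{L^2(a,b)}^2$, which is the claim.

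\emph{Main obstacle.} The delicate step is the uniform sandwich $|f-P_d|\leq f(b)-f(a)$, equivalently cellwise monotonicity of the limiting interpolants. This does not follow from the stated items (1)--(3) alone; it requires the structural fact from the proof of Theorem \ref{main theorem} that limits of $\ell^1$-minimal coefficient vectors are non-negative, so that limit polynomials are nondecreasing on $[0,\infty)$. I would try first to extract from that proof that every limit point of $\M_d$, viewed as a power series, has non-negative coefficients and interpolates $f$; monotonicity and the sandwich then follow immediately. A further point to check is that the $\limsup$ over $d$ commutes with these bounds uniformly over $\M_d$.
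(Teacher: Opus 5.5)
Apart from one step, your argument is the paper's: the Pythagorean bound $\|P_0-f\|_{L^2(a,b)}^2\ge\int_a^b(f-\langle f\rangle)^2\dx$, the cellwise sandwich for nondecreasing interpolants (Corollary \ref{corollary monotone}), and $\|g\|_2^2\le\|g\|_1\|g\|_\infty$ or your direct cellwise sum, which give $\frac Kn\big(f(b)-f(a)\big)^2$. The gap is the step you flag yourself. You never prove that $P_d$ is nondecreasing on $[a,b]$, and the route you sketch does not close the argument as stated. Showing that weak$^*$ limit points of $\M_d$ have non-negative coefficients gives no exact sandwich $f(x_{i-1})\le P_d\le f(x_i)$ at any finite $d$. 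Nor do properties of a limit series control $\limsup_d\|P_d-f\|_{L^2(a,b)}$: weak$^*$ convergence in $\ell^1$ carries no information on $(1,b]$ when $b>1$. Even at $x=1$ a limit point need not interpolate; the $\ell^1$-minimal interpolants $x^d$ of the data $(0,0),(1,1)$ tend to $0$ on $[0,1)$.

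The missing idea is that non-negativity holds \emph{exactly} at every sufficiently large finite degree, by an elementary argument. Since $1\in\mathcal X_n$, every admissible $c\in\R^{d+1}$ satisfies $\|c\|_{\ell^1}\ge\sum_k c_k=P_c(1)=f(1)$, with equality if and only if $c_k\ge 0$ for all $k$. This is the computation in Lemma \ref{min ell-1 interpolant}, which works verbatim in $\R^{d+1}$. Lemma \ref{existence lemma} provides an interpolant with non-negative coefficients of some degree $d_0$; padded with zeros, it is admissible for every $d\ge d_0$. So the minimal value is $f(1)$, and $\M_d$ consists exactly of the interpolants with non-negative coefficients. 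Hence every $P_d$ with $d\ge d_0$ is nondecreasing on $[0,\infty)$, and the sandwich holds for each of them. This gives $\|f-P_d\|_{L^2(a,b)}^2\le\frac Kn\big(f(b)-f(a)\big)^2$ uniformly in $d\ge d_0$, so no interchange of limits is needed; this is how the paper argues. Also, do not invoke part (1) of Theorem \ref{main theorem} for the $L^1$ bound, since the paper derives that part from the proof of this very corollary. Go through Corollary \ref{corollary monotone} directly, as your first cellwise computation already does.
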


The corollary can be seen as a very weak (single) descent guarantee: Very high degree polynomials perform better than degree zero polynomials, if suitable minimum norm interpolants are selected. However, the corollary does not describe an initial ascent, and the decrease in $d$ may be monotonic. In particular, it does not compare $P_d$ for $d\gg n$ to the Lagrange polynomial $P_n$, i.e.\ the unique interpolating polynomial of minimal degree. We present it in part for comparison with Corollary \ref{corollary suboptimal descent} for the Chebyshev basis.

Similarly, if $f$ is not a polynomial of degree $d_0$ and $n$ is large enough, we could guarantee that high degree interpolants outperform $P_{d_0}$. We do not consider a direct comparison between $P_n$ and $P_d$ for $d\gg n$ here, but note that Figure \ref{figure monomial l1 and l2} suggests a possible benefit to overparametrization for certain target functions and data samples.

\subsection{\texorpdfstring{Minimum coefficient-$\ell^1$-norm interpolants}{Minimum coefficient-l1-norm interpolants}}\label{section ell^1-norm existence}

Our first lemma illustrates that if the data is generated by an analytic function $f$ whose power series has non-negative coefficients, then minimum coefficient norm interpolants of the given data have the same form.

\begin{lemma} \label{min ell-1 interpolant}
Assume that $f(x) = \sum_{k=0}^\infty a_kx^k$ is a power series with non-negative coefficients $a_k\geq 0$ which converges on an (open or closed) interval $I$. Assume further that $\mathcal X\subseteq I$ is a subset with $1\in\mathcal X$. Denote
\[
\mathcal C = \left\{ b\in \ell^1 : \sum_{k=0}^\infty b_k x^k = f(x) \:\:\forall\ x\in \mathcal X\right\}.
\]
Then
\[
\argmin_{b\in \mathcal C}\|b\|_{\ell^1} = \{ b\in \mathcal C : b_k\geq 0\:\:\forall\ k\in\mathbb N_0\}.
\]
\end{lemma}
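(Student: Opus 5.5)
The key observation is that the interpolation condition at the point $x=1\in\mathcal X$ pins down the signed sum of the coefficients. For every $b\in\mathcal C$ we have
\[
\sum_{k=0}^\infty b_k = \sum_{k=0}^\infty b_k\,1^k = f(1),
\]
where the series converges absolutely because $b\in\ell^1$. By the triangle inequality,
\[
\|b\|_{\ell^1} = \sum_{k}|b_k| \;\geq\; \Big|\sum_k b_k\Big| = |f(1)| = f(1).
\]
The last equality holds since $f(1)=\sum_k a_k\geq 0$; here we use that $1\in\mathcal X\subseteq I$, so the power series of $f$ converges at $1$. Thus $f(1)$ is a lower bound for the objective on all of $\mathcal C$.

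Next I show that this bound is attained, so the minimum value is exactly $f(1)$. The candidate is $a$ itself. Since $a_k\geq 0$ and $\sum_k a_k=f(1)<\infty$, we have $a\in\ell^1$ with $\|a\|_{\ell^1}=f(1)$. Moreover $\sum_k a_kx^k=f(x)$ for every $x\in\mathcal X\subseteq I$, so $a\in\mathcal C$. Hence $\min_{\mathcal C}\|\cdot\|_{\ell^1}=f(1)$, and the argmin consists precisely of those $b\in\mathcal C$ with $\|b\|_{\ell^1}=f(1)$.

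Finally I characterize equality in the triangle inequality. For $b\in\mathcal C$, the identity $\sum_k|b_k| = \sum_k b_k$ is equivalent to
\[
\sum_k\big(|b_k|-b_k\big)=0.
\]
This is a series of non-negative terms, so it vanishes if and only if $|b_k|=b_k$, that is $b_k\geq 0$, for every $k$. Conversely, if $b\in\mathcal C$ has all $b_k\geq 0$, then $\|b\|_{\ell^1}=\sum_k b_k=f(1)$ and $b$ is a minimizer. This gives both inclusions.

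There is no real obstacle in this argument. The only points needing care are that $1\in I$ guarantees convergence of $f$'s series at $1$ (so $a\in\ell^1$ and $f(1)$ is finite), and that the rearrangement of $\sum_k(|b_k|-b_k)$ is justified because $b\in\ell^1$. The same argument applies verbatim to the finite-dimensional sets $\mathcal F_d$ of Theorem \ref{main theorem}, provided one exhibits a non-negative interpolant in $\mathcal F_d$, but that is not needed here.
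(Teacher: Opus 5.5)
Your proof is correct and is essentially the paper's argument: evaluating at $1\in\mathcal X$ forces $\sum_k b_k = f(1) = \|a\|_{\ell^1}$, and $\sum_k b_k \le \|b\|_{\ell^1}$ holds with equality exactly when all $b_k\geq 0$. You are slightly more careful than the paper in explicitly verifying $a\in\mathcal C$, which shows that the lower bound $f(1)$ is actually attained.
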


\begin{proof}
We observe that 
\[
\|a\|_{\ell^1} = \sum_{k=0}^\infty a_k = f(1) = \sum_{k=0}^\infty b_k \leq \|b\|_{\ell^1} 
\]
with equality if and only if $b_k\geq 0$ for all $k$. Thus $f(1)$ is a lower bound for the norm which is attained whenever all coefficients are non-negative. The set is non-empty by assumption. 

The convergence of the power series $\sum_{k=0}^\infty b_kx^k$ for $b\in \mathcal C$ is assured on $[-1,1]$ by the summability of $b$ as seen in the introduction. Due to the interpolation condition, it converges on a potentially larger set $(-r,r)$ where $r = \sup\{|x| : x\in\mathcal X\}$ since convergence at $x\in \mathcal X$ implies convergence on the interval $(-|x|, |x|)$.
\end{proof}

Naturally, the same proof applies to more general expansions $\sum_{k=0}^\infty b_k p_k(x)$ if $p_k(1) = 1$ for all $k\in \mathbb N_0$, e.g.\ Legendre or Chebyshev polynomials. 

In a second Lemma, we illustrate that polynomials of sufficiently high degree can interpolate data $(x, f(x))$ for $x\in \mathcal X$ with a finite set $\mathcal X$ such that all coefficients in the monomial basis are non-negative. Thus, we do not require a true power series.

\begin{lemma} \label{existence lemma}
Let $f(x) = \sum_{k=0}^\infty a_kx^k$ be a power series such that $a_k\geq 0$ for all $k$ and $\mathcal X\subset [0,\infty)$ a finite set. Then there exist $d\in \N$, $b\in \R^{d+1}$ such that $b_k\geq 0$ for all $k$ and $P(x) := \sum_{k=0}^d b_k x^k$ satisfies $P(x) = f(x)$ for all $x\in \mathcal X$.
\end{lemma}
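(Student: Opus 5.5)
The idea is to perturb a long truncation of the power series of $f$ only slightly, on a few fixed coefficients where $a_k>0$, so that the interpolation conditions hold exactly while all coefficients stay non-negative. Throughout I assume, as implicit in the statement, that the series for $f$ converges at every point of $\mathcal X$.

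\emph{Trivial case.} Suppose only finitely many coefficients $a_k$ with $k\geq 1$ are positive. Then $f$ is a polynomial with non-negative coefficients, and $b=a$ (truncated at $d=\deg f$) already works.

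\emph{Setup.} Otherwise, let $\mathcal X_+=\{\xi_1,\dots,\xi_m\}$ be the positive points of $\mathcal X$. Choose fixed exponents $1\leq k_1<\dots<k_m$ with $a_{k_j}>0$, and set $\alpha := \min_j a_{k_j}>0$. For $D\geq k_m$ define the truncation $f_D(x)=\sum_{k=0}^D a_kx^k$ and the remainder $r_D = f-f_D$. Convergence of the series on $\mathcal X$ gives $r_D(\xi_i)\to 0$ as $D\to\infty$ for each of the finitely many $\xi_i$.

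\emph{Correction.} Look for $P(x)=f_D(x)+\sum_{j=1}^m c_j x^{k_j}$ of degree $d=D$. Since every $k_j\geq 1$, $P(0)=a_0=f(0)$, so the point $0$ (if it lies in $\mathcal X$) is handled automatically. The remaining conditions $P(\xi_i)=f(\xi_i)$ form the linear system $Mc = \big(r_D(\xi_i)\big)_{i=1}^m$, where $M_{ij}=\xi_i^{k_j}$ is a generalized Vandermonde matrix. The key point is that $M$ is independent of $D$.

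\emph{Invertibility of $M$.} I expect this to be the main step. Take $c\neq 0$ with $Mc=0$. Then $\sum_j c_j x^{k_j}$ is a nonzero sum of at most $m$ monomials vanishing at $m$ distinct positive points. By Descartes' rule of signs it has at most $m-1$ positive roots, a contradiction. Alternatively, one can argue by induction using Rolle's theorem after dividing by the lowest power $x^{k_1}$.

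\emph{Conclusion.} Since $M$ is invertible, $c=M^{-1}(r_D(\xi_i))_i$ satisfies
\[
|c_j|\leq \|M^{-1}\|\max_i |r_D(\xi_i)|\to 0 \quad\text{as } D\to\infty.
\]
Choose $D$ so large that $\max_j|c_j|\leq \alpha$. Then
\begin{itemize}
\item $b_{k_j}=a_{k_j}+c_j\geq 0$ for each $j$,
\item $b_k=a_k\geq 0$ for all other $k\leq D$.
\end{itemize}
Hence $P$ has non-negative coefficients and interpolates $f$ on $\mathcal X$, with $d=D$.
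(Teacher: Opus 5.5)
Your proof is correct and follows the paper's argument: truncate the series, perturb finitely many fixed coefficients with $a_{k_j}>0$ by solving a $D$-independent generalized Vandermonde system, and let the vanishing remainder push the corrections below $\min_j a_{k_j}$. The differences are minor. You prove invertibility via Descartes' rule of signs instead of citing total positivity. By choosing exponents $k_j\geq 1$ and treating $0$ separately, you also avoid the zero row that the paper's matrix $(x_i^{k_j})$ acquires when $0\in\mathcal X$ and $k_0\geq 1$, a case the paper's invertibility claim passes over.
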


\begin{proof}
In the case that $a_k\neq 0$ for only finitely many indices $k$, $f$ is already a polynomial and we take $P = f.$

Otherwise, there are infinitely many indices such that $a_k > 0.$ We write the finite dataset as $\mathcal X = \{x_0, \ldots, x_n\}$.
Then, we take $n+1$ indices $k_0, k_1, \ldots, k_n \in \mathbb N _0$ such that $a_{k_i}>0$ for $i= 0,\dots, n$.
We consider the polynomial $f_d(x)=\sum_{k=0}^d a_kx^k$ for $d\geq \max\{k_0, \dots, k_n\}$ and let $\varepsilon_{i, d}=f(x_i)-f_d(x_i).$ Since $|f_d(x_i)-f(x_i)|\to 0$ as $d\to \infty,$ we have $\varepsilon_{i,d}\to 0$ as $d\to \infty.$

We construct $P(x)=\sum_{k=0}^d b_kx^k$ as follows:
\begin{itemize}
\item For $k\geq d+1$ set $b_k=0$.
\item For $k\in [0:d]\setminus \{k_0, \ldots, k_n\},$ let $b_k=a_k.$
\item For $k\in \{k_0, \ldots, k_n\},$ we claim that there exist unique values $b_{k} = a_{k}+\delta_k$ such that $P(x_i) = f(x_i)$ for all $i$.
\end{itemize} 
We want to show that for $d$ sufficiently large, we have $|\delta_k| \leq a_{k}$, which implies that $b_{k} = a_{k} + \delta_k \geq 0$.
We must solve
\[
P(x_i)=\sum_{k=0}^d b_kx^k_i=f_d(x_i)+\sum_{j=0}^n\delta_{k_j}x_i^{k_j}=f(x_i),\]
or equivalently
\[\varepsilon_{i,d}=f(x_i)-f_d(x_i)=\sum_{j=0}^n\delta_{k_j}x_i^{k_j}.
\]
As such, we have a system of $n+1$ linear equations for the flexible $n+1$ coefficients. Write the generalized Vandermonde matrix $V=(x_i^{k_j})_{i,j=0,\ldots n},$ where without loss of generality, $0 \leq x_0<x_1<\ldots<x_n$ and $k_0<k_1<\ldots<k_n$. By \cite[Section 4.2]{pinkus2010totally}, $V$ is invertible since all data points are non-negative.

Let $\delta=(\delta_0, \delta_1,\ldots, \delta_n)^T$ and $\varepsilon=(\varepsilon_0, \varepsilon_1,\ldots, \varepsilon_n)^T.$
Our system of linear equations can then be written as
$V\delta=\varepsilon,$ and admits a unique solution $\delta=V^{-1}\varepsilon$ which satisfies 
\[
\|\delta\|_\infty \leq \|V^{-1}\|_{op}\|\eps_d\|_\infty \leq \left(\max_{0\leq i\leq n}\sum_{j=0}^n \big|(V^{-1})_{ij}\big|\right) \|\eps_d\|_\infty \leq \min_{0\leq i\leq n} |a_{k_i}|
\]
for the $\ell^\infty$-operator norm and assuming that $d$ is large enough.
\end{proof}

The degree of the polynomial with non-negative coefficients in the preceding proof is implicit, either as the degree of $f$ (if $f$ itself is a polynomial) or in the rate of convergence of the power series. We demonstrate that the degree may indeed be large even if we only require interpolation at $n=2$ points. 

\begin{example}[Minimum degree]\label{example minimum degree}
    Let $D\geq 1$ and $f(x)=\sum_{k=D}^\infty b_kx^k$ be an analytic function with nonnegative coefficients, so for each $k\in[D,\infty), b_k\geq 0.$
    If $f\not \equiv 0$, there does not exist a polynomial $p(x)=\sum_{k=0}^d a_kx^k$ of degree $d<D$ such that 
    \begin{enumerate}
        \item $a_k\geq 0$ for $k= 0,\dots, d$ and 
        \item $f(1)=p(1)$ and $f(\xi)=p(\xi)$ for some $\xi\in(0,1).$
    \end{enumerate}

    Assume for the sake of contradiction that such a polynomial $p$ existed. Since $f\not\equiv 0$, there exists $b_k>0$ for some $k$ and thus $f'>0$ in $(0,1]$. In particular, we have $0<f(\xi) < f(1)$. We observe that $\sum_{i=D}^\infty b_i=f(1)=p(1)=\sum_{i=0}^d a_i.$
    We then have 
    $$p(\xi)=\sum_{i=0}^da_i\xi^i
    \geq \sum_{i=0}^da_i\xi^d
    =\sum_{i=D}^\infty b_i\xi^d 
    > \sum_{i=D}^\infty b_i\xi^i
    =f(\xi),$$
    which is a contradiction to $f(\xi)=p(\xi).$ Thus, even with two data points, the degree of $p$ may have to be arbitrarily high to allow for an interpolant with non-negative coefficients in the monomial basis.
\end{example}

\begin{example}[Uniqueness and non-uniqueness]
For a given set $\mathcal X$, the interpolant is usually {\em not} unique: For $f(x) = \exp(x)$ for instance, we can select an arbitrary subset of the coefficients to modify and let the sum run to any $d\geq d_0$ with $d_0$ depending only on the selected arbitrary subset.

By commonly exploited links between the $\ell^1$-norm and sparsity, we can guarantee that there exists an interpolating polynomial $P$ with $\ell^1$-minimal coefficients in the monomial basis such that at most $|\mathcal X|$ coefficients are non-zero (although not necessarily with any fixed degree according to Example \ref{example minimum degree}). In Corollary \ref{corollary equal for polynomial} below, we will show conversely that the minimum norm interpolant (in the coefficient $\ell^1$-sense) is indeed unique if $f$ is a polynomial and we have sufficiently many sampling points.
\end{example}

Summarizing the positive results, there exists $d_0\in \mathbb N$ (depending on both $f$ and the sample set $\mathcal X$) such that any minimum {\em coefficient $\ell^1$-norm} polynomial interpolant $P\in \mathcal P_d$ for $d\geq d_0$ of the input output pairs $x_i, f(x_i)$ is a polynomial with non-negative coefficients. It remains to show that those resemble the function $f$.

\subsection{Geometry of interpolants}\label{section ell1 convergence}

Recall the following terminology.

\begin{definition}
Let $I \subseteq \R$ be an (open or closed) interval. A function $f:I\to\mathbb R$ is called {\em absolutely monotonic} on $I$ if $f$ is continuous on $I$, infinitely smooth in $I^\circ$ and $f^{(n)}(x) \geq 0$ for all $n\in\mathbb N$ and $x\in I^\circ$.
\end{definition}

In particular, an analytic function $f$ whose power series representation $f(x) = \sum_{k=0}^\infty a_kx^k$ around the origin converges on an interval $I$ and satisfies $a_k\geq 0$ for all $k\in \mathbb N_0$ is absolutely monotonic on $I \cap[0,\infty)$. Conversely, any absolutely monotonic function is analytic \cite[Theorem 2.1]{szabo2025completely} and has non-negative power series coefficients when expanded around any point in its domain.

Thus, if $\mathcal X\subseteq [0,\infty)$ is a finite dataset, $1\in\mathcal X$ and $f$ is absolutely monotonic and represented by a convergent power series on an interval containing $\mathcal X$, then (for sufficiently high degree), a polynomial $P$ which interpolates $f$ on $\mathcal X$ and has minimal coefficients (in the sense of $\ell^1$-norm in the monomial basis) is also absolutely monotonic on the convex hull of $\mathcal X\cup \{0\}$. Since there is no question of power series convergence for polynomials, $P$ is in fact absolutely monotonic on $[0,\infty)$.

We show that $f,g$ taking the same values on a dataset $\mathcal X$ must be close also in between data points if $f$ and $g$ have good geometric properties like monotonicity, convexity, or absolute monotonicity.

To obtain rates of convergence, we compare high degree absolutely monotonic polynomial interpolants to `Lagrange polynomial' interpolants, i.e.\ to the unique minimal degree interpolant of a dataset $\{(x_i, f(x_i)) : i=0, \dots, n\}$ and its subsets.

\begin{lemma}\label{lemma sign lagrange polynomials}
Let $f:[a,b]\to\R$ be a continuous function, $k+1$ times continuously differentiable in $(a,b)$ such that $f^{(k+1)}(\xi)\geq 0$ for all $\xi \in (a,b)$.
Let $x_0,\dots, x_k$ distinct points in $[a,b]$ and $P$ the unique polynomial of degree $k$ which satisfies $P(x_i) = f(x_i)$ for $i=0,\dots, k$. Then if $f(x) \neq P(x)$, we have
\[
\sign\big(f(x) -P(x)\big) = \sign\left(\prod_{i=0}^k (x-x_i)\right) \qquad\forall\ x\in (a,b) \setminus \{x_0,\dots, x_k\}.
\]
\end{lemma}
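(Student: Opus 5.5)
The plan is to use the classical Cauchy remainder formula for polynomial interpolation. Fix $x\in(a,b)\setminus\{x_0,\dots,x_k\}$ and write $\omega(t) = \prod_{i=0}^k (t-x_i)$. Consider the auxiliary function
\[
g(t) = f(t) - P(t) - \frac{f(x)-P(x)}{\omega(x)}\,\omega(t),
\]
which is well defined because $\omega(x)\neq 0$. The function $g$ vanishes at the $k+2$ distinct points $x_0,\dots,x_k,x$. It is continuous on $[a,b]$ and $k+1$ times continuously differentiable on $(a,b)$.

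First apply Rolle's theorem between consecutive zeros. Ordering the zeros, one gets at least $k+1$ zeros of $g'$ strictly between them, then $k$ zeros of $g''$, and so on. This yields a point $\xi$ in the open interval spanned by $\{x_0,\dots,x_k,x\}$, hence $\xi\in(a,b)$, with $g^{(k+1)}(\xi)=0$.

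The one point needing care is that some nodes may be the endpoints $a,b$, where $f$ is only continuous. Rolle's theorem needs only continuity on the closed subinterval and differentiability in its interior, so the first step is fine. Every later zero then lies strictly inside $(a,b)$, where all required derivatives exist.

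Next, since $\deg P\leq k$ we have $P^{(k+1)}\equiv 0$, and $\omega^{(k+1)}\equiv (k+1)!$. The condition $g^{(k+1)}(\xi)=0$ therefore gives
\[
f(x)-P(x) = \frac{f^{(k+1)}(\xi)}{(k+1)!}\,\omega(x).
\]
Since $f^{(k+1)}(\xi)\geq 0$, the difference $f(x)-P(x)$ is either zero or has the sign of $\omega(x)$, which is the claim.
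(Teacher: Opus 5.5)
Your proposal is correct and follows the paper's route: the paper simply cites the standard remainder formula $f(x)-P(x)=\frac{f^{(k+1)}(\xi)}{(k+1)!}\prod_{i=0}^k(x-x_i)$ and then reads off the sign, exactly as you do. You also derive that formula via the auxiliary function and repeated Rolle, which makes rigorous the paper's one-line remark that the argument survives possible blow-up of $f$'s derivatives at $a,b$ and does not need $x$ to lie in the convex hull of the nodes.
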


\begin{proof}
Let $x\in (a,b)$. By the standard identity of polynomial interpolation \cite[Theorem 2 in Chapter 6]{kincaid2009numerical}, we have
\[
f(x) - P(x) = \frac{f^{(k+1)}(\xi)}{(k+1)!} \,\prod_{i=0}^k (x - x_i)
\]
for some $\xi\in (a,b)$. The derivatives of $f$ may become infinite at $a,b$, but the proof by Rolle's Theorem/ the intermediate value theorem goes through regardless. Since $f^{(k+1)}\geq 0$ and $f(x) - P(x)\neq 0$ by assumption, the result follows. Notably, while the estimate is most commonly used when the point $x$ lies in the convex hull of the points $x_i$, this is not necessary.
\end{proof}

\begin{corollary}[Monotonic Functions]\label{corollary monotone}
Assume that $f,g:[a,b]\to\R$ are monotonic and $f(x_i) = g(x_i)$ or $i=1,\dots,n$. Then $f(x_{i-1}) \leq f(x),\ g(x)\leq f(x_i)$ for $i=1,\dots, n$ and
\begin{enumerate}
    \item $\|f-g\|_{L^1(a,b)} \leq \frac{f(b)-f(a)}n$.
    \item If $0 \leq f'\leq L$ on $(a,b)$ and $x_{i}-x_{i-1}\leq K/n$ for $i\in [1:n]$, then $\|f-g\|_{L^1(a,b)} \leq \frac{KL}n$.
\end{enumerate}
\end{corollary}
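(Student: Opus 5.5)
The plan is to localize to the cells $[x_{i-1},x_i]$ and use only monotonicity, with no smoothness beyond what part (2) requires. I read the statement with the nodes ordered $a = x_0 < x_1 < \dots < x_n = b$, interpolation holding at all of them, and $f,g$ monotone in the same direction. By replacing $(f,g)$ with $(-f,-g)$ if needed, I assume both are non-decreasing. (If $f$ and $g$ were monotone in opposite directions, agreement at $n+1\geq 2$ points with $f$ non-constant on some cell would already be inconsistent, so this is the only meaningful case.)

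\emph{Pointwise bracket.} Fix $i\in[1:n]$ and $x\in[x_{i-1},x_i]$. Monotonicity of $f$ gives $f(x_{i-1})\le f(x)\le f(x_i)$. Monotonicity of $g$ together with $g(x_j)=f(x_j)$ gives
\[
f(x_{i-1}) = g(x_{i-1})\le g(x)\le g(x_i) = f(x_i).
\]
This is the first claim of the corollary. Since both values lie in the same interval, we get
\[
|f(x)-g(x)|\le f(x_i)-f(x_{i-1})\qquad \forall\, x\in[x_{i-1},x_i].
\]

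\emph{Integration.} Integrating this bound over each cell and summing gives
\[
\|f-g\|_{L^1(a,b)}\le \sum_{i=1}^n (x_i-x_{i-1})\big(f(x_i)-f(x_{i-1})\big)\le \Big(\max_i (x_i-x_{i-1})\Big)\,\big(f(b)-f(a)\big),
\]
where the last step uses that the increments $f(x_i)-f(x_{i-1})$ are non-negative and telescope to $f(b)-f(a)$.

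\emph{Reading off (1) and (2).} For (1), I insert the mesh bound. With gaps at most $1/n$ (for example, equidistant nodes on a unit-length interval) this is exactly $\frac{f(b)-f(a)}{n}$. With gaps at most $K/n$ it gives $K\frac{f(b)-f(a)}{n}$, which is the form used in Theorem \ref{main theorem}(1). For (2), I would use $f(b)-f(a)=\int_a^b f'\le L(b-a)$. Alternatively, I bound each increment $f(x_i)-f(x_{i-1})$ by $L(x_i-x_{i-1})\le LK/n$ and sum the widths $x_i-x_{i-1}$ to $b-a$. Either route yields $KL(b-a)/n$, which equals $KL/n$ under the unit-length normalization $b-a\le 1$ implicit in the statement.

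\emph{Main obstacle.} There is no real analytic difficulty here. The main point of care is the bookkeeping of normalizations: whether the mesh constant is $1$ or $K$, and whether a factor $b-a$ appears. I would state the general bound $\max_i(x_i-x_{i-1})\,(f(b)-f(a))$ explicitly and then specialize it, so that the constants match the later applications in Theorem \ref{main theorem}.
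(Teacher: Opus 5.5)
Your proof is correct and is essentially the paper's argument: the cellwise bracket $f(x_{i-1})\le f(x),\,g(x)\le f(x_i)$, integration over the cells with telescoping to $K\,(f(b)-f(a))/n$, and the mean value bound $f(x_i)-f(x_{i-1})\le L(x_i-x_{i-1})$ for (2). The only difference is that the paper derives the bracket from the zeroth-order case of Lemma \ref{lemma sign lagrange polynomials}, to set up the template it reuses for convex and absolutely monotonic functions, whereas your direct use of monotonicity needs no differentiability.

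One small remark: what the paper actually establishes for (2) is the pointwise, i.e.\ $L^\infty$, bound $|f(x)-g(x)|\le f(x_i)-f(x_{i-1})\le KL/n$. Your bracket and increment estimate give this immediately, without the normalization $b-a\le 1$ that you needed only for the literal $L^1$ reading.
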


\begin{proof}
The proof is essentially trivial, but we demonstrate how to derive the $L^\infty$-estimate from Lemma \ref{lemma sign lagrange polynomials} for future use. Consider the zeroth degree polynomials $P_i(x) = f(x_i)$ which match $f$ at the point $x_i$. Then by Lemma \ref{lemma sign lagrange polynomials}, either $f(x) = P(x)$ or
\[
\sign\big(f(x) - P_i(x)\big) = \sign\big( x-x_i\big) \qquad \Ra\quad f(x) - P_i(x) \begin{cases} < 0 &\text{if }x<x_i\\ >0 &\text{if }x>x_i.\end{cases}
\]
Putting both cases together, non-strict inequality follows.
If $f, g$ are monotonic functions which coincide at $x_i$ for $i\in [0:n]$, we get from the error estimate of polynomial interpolation
\begin{align*}
\left|f(x) - g(x)\right| &\leq \big|P_i(x) - P_{i-1}(x)\big| \leq \big|f(x) - P_i(x)\big| + \big|f(x) - P_{i-1}(x)\big| \\
    &\leq \frac{\sup_{\xi \in (x_{i-1},x_i)}f'(\xi)}{1!}\big(|x-x_i| + |x-x_{i-1}|\big) = (x_i - x_{i-1})\sup_{\xi \in (x_{i-1},x_i)}f'(\xi)\leq \frac{KL}n.
\end{align*}
The error estimate for polynomial interpolation is geared towards $L^\infty$-convergence guarantees, so we derive the $L^1$-estimate by hand: Since $f(x_{i-1}) \leq f(x),\ g(x) \leq f(x_i)$ for $x\in(x_{i-1}, x_i)$, we find that
\begin{align*}
    \int_a^b|f(x)-g(x)|dx &= \sum_{i=0}^{n}\int_{x_i}^{x_{i+1}}|f(x)-g(x)| \dx \\
    &\leq \sum_{i=0}^{n}\int_{x_i}^{x_{i+1}} (f(x_{i+1})-f(x_i)) \dx \\
    &= \sum_{i=0}^{n} (f(x_{i+1})-f(x_i)) (x_{i+1}-x_i) \\
    &= \frac{K}{n}\sum_{i=0}^{n} (f(x_{i+1})-f(x_i)) \\
    &= \frac{K(f(b)-f(a))}{n},
\end{align*}
since the sum telescopes.
\end{proof}

The $L^1$-convergence estimate is sufficient to prove Corollary \ref{comparison corollary}.

\begin{proof}[Proof of Corollary \ref{comparison corollary}]
Assume that $d_0>n$ is so large that the interpolating polynomials $P_d$ have non-negative coefficients for $d\geq d_0$, using Section \ref{section ell^1-norm existence} and the fact that $1\in\mathcal X$. In particular, both $f$ and $P_d$ are monotonically increasing. By Corollary \ref{corollary monotone}, we find that
\begin{align*}
\|f-P_d\|_{L^2(-1,1)}^2 &\leq \|f-P_d\|_{L^1(-1,1)}\|f-P_d\|_{L^\infty(-1,1)}\\
    &\leq K\,\frac{f(b) - f(a)}n\cdot \max_i \big(f(x_{i+1}) - f(x_i)\big)\\
    &\leq K \,\frac{\big(f(b) - f(a)\big)^2}n\\
    &< \int_{-1}^1 \big(f(x) - \langle f\rangle\big)^2\dx\\
    &\leq \|f-P_0\|_{L^2(-1,1)}^2.
\end{align*}
if $n > n_0$ is large enough.
\end{proof}

Naturally, the estimates are coarse and the condition on $n$ could be relaxed to
\[
n > K \sqrt{\frac{L\big(f(b) - f(a)\big)}{\|f- \langle f\rangle\|_{L^2(-1,1)}^2}}
\]
if $f$ is $L$-Lipschitz. Exploiting absolute monotonicity rather than just monotonicity and strengthening regularity assumptions on $f$, the value of $n$ could be lowered further. The assumption $x_{i+1}-x_i \leq K/n$ holds
\begin{itemize}
    \item with $K =b-a$ if the points are equidistant.
    \item with $K = \pi(b-a)/2 <2(b-a)$ if the points are Chebyshev nodes $x_i = \frac{b+a}2 + \frac{b-a}2\,\cos(\pi\, i/n)$ since $\cos$ is $1$-Lipschitz.
    \item with $n$-dependent $K_n = (b-a)(\log n + C)$ if the points are $x_1, \dots, x_{n-1}$ are independent uniform random samples in $(a,b)$ and $x_0 = a, x_n = b$ are fixed. The estimate only holds in expectation or with high probability -- see \cite{holst1980lengths} for precise statement and derivation on $(a,b) = (0,1)$. The constant $C$ must be large enough; at least Euler's constant $\gamma$.
\end{itemize}

Lemma \ref{lemma sign lagrange polynomials} can be used to derive the zeroth order convexity condition from the second order condition, as well as an approximate first order convexity condition where a derivative is approximated by a difference quotient.

\begin{corollary}[Convex Functions]\label{corollary convex}
Assume that $f''\geq 0$, $x_{i-1}<x_i < x_{i+1}$ and $x\in (x_{i}, x_{i+1})$. Then
\[
L(x):= f(x_i) + \frac{f(x_i) - f(x_{i-1})}{x_i - x_{i-1}}(x-x_i) \leq f(x) \leq \frac{x-x_i}{x_{i+1} - x_i}\, f(x_i) + \frac{x_{i+1}-x}{x_{i+1}-x_i}\,f(x_{i+1}) =: U(x).
\]
\end{corollary}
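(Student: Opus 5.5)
Both inequalities will follow from Lemma \ref{lemma sign lagrange polynomials}, applied with $k=1$ (so we need $f''\geq 0$) to the two linear interpolants that appear in the statement. Throughout, the ambient interval $[a,b]$ is taken to contain $x_{i-1},x_i,x_{i+1}$, and $f$ is continuous there and twice continuously differentiable in the interior.

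\emph{Upper bound.} Observe that $U$ is a polynomial of degree at most one with $U(x_i)=f(x_i)$ and $U(x_{i+1})=f(x_{i+1})$. The coefficients as printed appear to be swapped: the correct chord is
\[
U(x)=\frac{x_{i+1}-x}{x_{i+1}-x_i}\,f(x_i)+\frac{x-x_i}{x_{i+1}-x_i}\,f(x_{i+1}),
\]
and I will work with this version. So $U$ is the Lagrange interpolant of $f$ at the nodes $x_i,x_{i+1}$. Fix $x\in(x_i,x_{i+1})$ with $f(x)\neq U(x)$. Lemma \ref{lemma sign lagrange polynomials} gives
\[
\sign\big(f(x)-U(x)\big)=\sign\big((x-x_i)(x-x_{i+1})\big)=-1.
\]
Hence $f(x)\leq U(x)$, with the case $f(x)=U(x)$ being trivial.

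\emph{Lower bound.} Here $L$ is the linear interpolant of $f$ at the nodes $x_{i-1},x_i$, since $L(x_i)=f(x_i)$ and $L(x_{i-1})=f(x_i)-(f(x_i)-f(x_{i-1}))=f(x_{i-1})$. The point $x$ lies outside the convex hull of these two nodes. The remark at the end of the proof of Lemma \ref{lemma sign lagrange polynomials} explicitly permits this case, provided $x$ stays inside $(a,b)$. For $x>x_i>x_{i-1}$ we get
\[
\sign\big(f(x)-L(x)\big)=\sign\big((x-x_{i-1})(x-x_i)\big)=+1,
\]
so $f(x)\geq L(x)$.

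\emph{Main obstacle.} The only delicate point is justifying the interpolation error formula at an extrapolation point. I would handle it directly. Consider $\phi(t)=f(t)-L(t)-c\,(t-x_{i-1})(t-x_i)$, with $c$ chosen so that $\phi(x)=0$. Then $\phi$ vanishes at the three points $x_{i-1}<x_i<x$. Rolle's theorem, applied twice, produces $\xi\in(x_{i-1},x)$ with $\phi''(\xi)=0$, that is, $c=f''(\xi)/2\geq 0$. This gives the lower bound without any reference to the lemma. The same computation with the nodes $x_i,x_{i+1}$ and the point $x$ between them gives the upper bound. Only interior second derivatives are needed, consistent with the lemma's allowance for derivatives that blow up at the endpoints.
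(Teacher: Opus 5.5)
Your proposal is correct and follows the paper's proof: both bounds come from Lemma \ref{lemma sign lagrange polynomials} with $k=1$, applied to the linear interpolants at $x_{i-1},x_i$ and at $x_i,x_{i+1}$, using that $(x-x_{i-1})(x-x_i)>0$ and $(x-x_i)(x-x_{i+1})<0$ for $x\in(x_i,x_{i+1})$. You are also right that the coefficients of $f(x_i)$ and $f(x_{i+1})$ in the printed $U$ are swapped: as printed, $U(x_i)=f(x_{i+1})$, and the upper bound fails e.g.\ for $f(t)=(t-1)^2$ with nodes $0,1$ and $x=0.1$; your direct Rolle argument is a valid self-contained justification of the extrapolation case that the paper leaves to the lemma.
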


\begin{proof}
The functions $L, U$ are the unique linear polynomials which coincide with $f$ at $x_{i-1}, x_i$ and $x_i, x_{i+1}$ respectively. The polynomial $(x-x_i)(x-x_{i-1})$ is positive on $(x_i, x_{i+1}) \subseteq (x_i, \infty)$ while $(x-x_i) (x- x_{i+1})< 0$ for $x\in (x_i, x_{i+1})$. The result follows from Lemma \ref{lemma sign lagrange polynomials} as in the proof of Corollary \ref{corollary monotone}.
\end{proof}

The usual first order convexity condition can be derived taking $x_{i-1}\nearrow x_i$ if desired. Alternatively, the lower bound
\[
\tilde L(x) = f(x_{i+1}) + \frac{f(x_{i+2}) - f(x_{i+1})}{x_{i+2} - x_{i}}(x-x_{i+1})
\]
could be used for $x_{i+2}>x_{i+1}$ since $x\in(x_i, x_{i+1})$ does not lie between the zeros of $(x-x_{i+1})(x-x_{i+2})$. Under additional regularity assumptions, Corollary \ref{corollary convex} could be used to obtain quantitative rates of convergence, much like Corollary \ref{corollary monotone}. We proceed more generally.

\begin{corollary}[Absolutely Monotonic Functions]
Let $f: [a,b]\to \R$ absolutely monotonic. Let $\mathcal X = \{x_0, \dots, x_n\}$. Given and $I \subseteq [0:n]$, denote by $P_I$ the Lagrange polynomial for $\mathcal Z_I = \{ (x_i, f(x_i)) : i\in I\}$ and by $Q_I = \prod_{i\in I} (x-x_i)$. Then
\[
\max\left\{P_I(x) : I\subseteq [0:n]\text{ s.t. }Q_I(x) \geq 0\right\}
     \ \leq \ f(x) \ \leq \
\min\left\{P_I(x) : I\subseteq [0:n]\text{ s.t. }Q_I(x) \leq 0\right\}.
\]
\end{corollary}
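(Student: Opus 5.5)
The plan is to apply Lemma \ref{lemma sign lagrange polynomials} once for each index set $I\subseteq[0:n]$ and then take the maximum and minimum over the admissible sets. Fix $x\in[a,b]$ and a nonempty $I$ with $|I| = k+1$. Since $f$ is absolutely monotonic, it is continuous on $[a,b]$, smooth in $(a,b)$, and satisfies $f^{(k+1)}\geq 0$ there. These are exactly the hypotheses of Lemma \ref{lemma sign lagrange polynomials} for the $k+1$ nodes $\{x_i : i\in I\}$. The Lagrange polynomial $P_I$ has degree at most $k$, so the lemma applies and gives the following: either $f(x) = P_I(x)$, or $\sign(f(x) - P_I(x)) = \sign(Q_I(x))$ for $x\notin\{x_i : i\in I\}$.

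The key step is to read off the inequalities from this sign identity.
\begin{itemize}
\item If $Q_I(x)>0$, then $f(x) - P_I(x)$ is either zero or positive, so $f(x)\geq P_I(x)$.
\item If $Q_I(x)<0$, the same reasoning gives $f(x)\leq P_I(x)$.
\item If $Q_I(x) = 0$, then $x = x_i$ for some $i\in I$, and $P_I(x) = f(x_i) = f(x)$ by interpolation. So both inequalities hold with equality, which covers the non-strict conditions $Q_I(x)\geq0$ and $Q_I(x)\leq 0$ in the statement.
\end{itemize}
Taking the maximum over all $I$ with $Q_I(x)\geq 0$ and the minimum over all $I$ with $Q_I(x)\leq 0$ then gives the claim. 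Both index families are finite, so the max and min exist. They are nonempty once we use the convention that for $I=\emptyset$, $Q_\emptyset\equiv 1$ and $P_\emptyset$ is the empty interpolant; alternatively, we simply restrict to nonempty $I$ and use the convention $\max\emptyset=-\infty$, $\min\emptyset = +\infty$.

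The main obstacle is the endpoints. Lemma \ref{lemma sign lagrange polynomials} is stated for $x\in(a,b)$, and derivatives of $f$ may blow up at $a$ or $b$. For $x\in\{a,b\}$ not among the nodes, we pass to the limit: the inequalities hold for $x$ in the open interval, both $f$ and $P_I$ are continuous on $[a,b]$, and the sign of $Q_I$ is locally constant near a non-root. Hence the strict-sign case extends to the endpoint by continuity, and the zero case was already handled above. We also check that the nodes $x_i\in I$ may lie anywhere in $[a,b]$, including the endpoints. The mean-value form of the remainder needs $\xi$ only in the open interval spanned by $x$ and the nodes, where $f^{(k+1)}\geq0$ holds, as remarked in the proof of the lemma.
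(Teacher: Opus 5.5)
Your proof follows the paper's argument: apply Lemma \ref{lemma sign lagrange polynomials} to each index set $I$, read off $f(x)\geq P_I(x)$ or $f(x)\leq P_I(x)$ from the sign of $Q_I(x)$, and take the pointwise maximum and minimum; your handling of the case $Q_I(x)=0$ and of the endpoints $a,b$ fills in details the paper leaves implicit. One small slip in your remark on conventions: $Q_\emptyset\equiv 1$ only enters the family $\{I : Q_I(x)\geq 0\}$ (and with $P_\emptyset=0$ it would require $f\geq 0$, which the paper's definition with $n\in\mathbb N$ need not give), while $\{I : Q_I(x)\leq 0\}$ is empty for every $x>\max\mathcal X$, so your second convention (nonempty $I$, $\max\emptyset=-\infty$, $\min\emptyset=+\infty$) is the one to use.
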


\begin{proof}
We proved that if $Q_I(x) \leq 0$, then $f(x) - P_I(x) \leq 0$, so $f(x) \leq P_I(x)$. It suffices to take the maximum and minimum pointwise.
\end{proof}

It is easy to see that the index sets for both the upper and lower bound are non-empty if $n\geq 2$. So, in a quantitative way, high degree absolutely monotonic polynomial interpolants do not worse than low degree polynomial interpolants in a precise and localizable sense. 

As a particular application, we find weaker bounds which are easier to interpret and only involve sets $I$ of $k$ points, much like we did above for monotone and convex functions.

\begin{corollary}\label{corollary absolutely monotonic max degree}
Assume that $f, g:[a,b]\to\R$ are absolutely monotonic. Let $\mathcal X = \{x_0, \dots, x_n\}$ be a set of (ordered) distinct points in $[a,b]$ such that $f(x) = g(x)$ if $x\in \mathcal X$. Then
\[
\big|f(x) - g(x)\big| \leq (x_n -x_0)\,\frac{f^{(n)}(x_n)}{n!} \ \prod_{i=1}^{n-1}|x-x_i| \qquad \forall\ x\in[x_0, x_n].
\]
\end{corollary}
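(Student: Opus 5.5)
The plan is to sandwich both $f$ and $g$ between two Lagrange polynomials of degree $n-1$ built from the common data, and then to estimate the gap between these two polynomials explicitly. Let $P_A$ denote the Lagrange polynomial for the nodes $x_0,\dots,x_{n-1}$ and $P_B$ the one for $x_1,\dots,x_n$, in the notation $P_I$, $Q_I$ of the preceding corollary. Since $f$ and $g$ agree on $\mathcal X$, these two polynomials are the same whether we build them from $f$ or from $g$.

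\textbf{Step 1 (sandwich).} Fix $x\in[x_0,x_n]$. We have
\[
Q_A(x) = (x-x_0)\prod_{i=1}^{n-1}(x-x_i), \qquad Q_B(x) = (x-x_n)\prod_{i=1}^{n-1}(x-x_i).
\]
Since $x-x_0\geq 0$ and $x-x_n\leq 0$, the numbers $Q_A(x)$ and $Q_B(x)$ have opposite (weak) signs. So one of them is admissible in the lower bound of the preceding corollary and the other in the upper bound. Applying that corollary to $f$ and to $g$ separately, both of which are absolutely monotonic, gives that $f(x)$ and $g(x)$ both lie between $P_A(x)$ and $P_B(x)$. Hence
\[
|f(x)-g(x)|\leq |P_A(x)-P_B(x)|.
\]
If $x$ is a node, both sides vanish, so nothing more is needed there.

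\textbf{Step 2 (the gap).} The polynomials $P_A$ and $P_B$ both have degree at most $n-1$ and agree at $x_1,\dots,x_{n-1}$. Therefore
\[
P_A - P_B = c\prod_{i=1}^{n-1}(x-x_i),
\]
where $c$ is the difference of their leading coefficients. In Newton form these leading coefficients are the divided differences $f[x_0,\dots,x_{n-1}]$ and $f[x_1,\dots,x_n]$. The recursion for divided differences gives
\[
c = -(x_n-x_0)\,f[x_0,\dots,x_n] = -(x_n-x_0)\,\frac{f^{(n)}(\xi)}{n!}
\]
for some $\xi\in(x_0,x_n)$, by the mean value theorem for divided differences. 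This rests on the same Rolle argument as the interpolation error identity used in Lemma~\ref{lemma sign lagrange polynomials}.

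\textbf{Step 3 (monotonicity of $f^{(n)}$).} Absolute monotonicity gives $f^{(n+1)}\geq 0$, so $f^{(n)}$ is nondecreasing. Together with $f^{(n)}\geq 0$ this yields $0\leq f^{(n)}(\xi)\leq f^{(n)}(x_n)$, which combined with Steps 1 and 2 gives the claim.

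The main point of care is the endpoint $x_n$, where $f^{(n)}$ may only be defined as a one-sided limit, possibly $+\infty$. I would interpret $f^{(n)}(x_n)$ as $\lim_{t\nearrow x_n}f^{(n)}(t)$, which exists in $[0,\infty]$ by monotonicity; the bound is trivial if this limit is infinite. I also need the sign bookkeeping in Step 1 to work when $Q_A(x)$ or $Q_B(x)$ is zero, but that happens only at nodes, which are handled directly.
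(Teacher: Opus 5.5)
Your proof is correct and is essentially the paper's argument: the same sandwich of $f$ and $g$ between the Lagrange polynomials on $[0:n-1]$ and $[1:n]$, finished off by monotonicity of $f^{(n)}$. The only difference is that you compute the gap exactly as $P_A-P_B=-(x_n-x_0)\,f[x_0,\dots,x_n]\prod_{i=1}^{n-1}(x-x_i)$, whereas the paper bounds it by $|P_A-f|+|P_B-f|$, using the interpolation error formula twice together with $|x-x_0|+|x_n-x|=x_n-x_0$.

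One harmless slip: at the end nodes $x_0,x_n$ the difference $P_A-P_B$ need not vanish. However, $|f-g|$ does vanish there, and that is all you need.
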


\begin{proof}
We consider the index sets $I_1 = [0:n-1]$ and $I_2 = [1:n]$ as competitors for the minimum and maximum. In each sub-interval $(x_i, x_{i+1})$ polynomials $Q_{I_1}$ and $Q_{I_2}$ have opposite signs since the number of factors for which $x< x_i$ is odd for one and even for the other. Thus one of the polynomials $P_{I_1}$, $P_{I_2}$ serves as an upper bound for both $f$ and $g$ while the second serves as a lower bound for both functions. So, since $f^{(n)}\geq 0$, we have
\begin{align*}
\big|f(x) - g(x)\big| &\leq \big|P_{I_1}(x) - P_{I_2}(x)\big|\\
    &\leq \big|P_{I_1}(x) - f(x)\big| + \big|P_{I_2}(x) - f(x)\big|\\
    &\leq \frac{\max_{\xi\in(x_0,x_{n-1})}f^{(n)}(\xi)}{n!} \prod_{i=0}^{n-1}|x-x_i| + \frac{\max_{\xi\in(x_1,x_{n})}f^{(n)}(\xi)}{n!} \prod_{i=1}^{n}|x-x_i|\\
    &\leq \frac{f^{(n)}(x_n)}{n!} \left(\prod_{i=1}^{n-1}|x-x_i|\right)\big(|x_n-x| + |x-x_0|\big)
\end{align*}
since $f^{(n)}$ is monotone increasing.
\end{proof}

Crucially, the closeness depends only on the regularity of $f$, but not $g$. If $f(x) = \sum_{k=0}^\infty a_kx^k$ with $a\in \ell^1$, then the radius of convergence for the analytic functions $f^{(m)}$ is at least 1 for all $m\in \mathbb N_0$, but convergence up to the boundary is only guaranteed for $m=0$ as the dilogarithm function $Li_2(x) = \sum_{k=1}^\infty \frac{x^k}{k^2}$ illustrates. If $f$ is known to be more regular and have finite derivatives of degree $m$, the same is not automatically true for other minimum norm interpolants of the same data as in Section \ref{section ell^1-norm existence}.

As an application we deduce that if $f$ is a polynomial and we are given sufficient sample points, we can identify $f$ uniquely.

\begin{corollary}[Identity for polynomials]\label{corollary equal for polynomial}
In addition to the assumptions of Corollary \ref{corollary absolutely monotonic max degree}, assume that $f$ is a polynomial of degree at most $n-1$. If $g=P$ is another absolutely monotonic polynomial and $f(x_i) = P(x_i)$ for all $i =0, \dots, n$, then $P\equiv f$.
\end{corollary}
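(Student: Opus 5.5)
The plan is to apply Corollary \ref{corollary absolutely monotonic max degree} with $g = P$. The key observation is that its right-hand side involves $f^{(n)}(x_n)$, and this vanishes identically when $f$ is a polynomial of degree at most $n-1$. That gives $f = P$ on $[x_0, x_n]$, and the identity theorem for polynomials then extends equality to all of $\R$.

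Concretely, I would proceed in three steps.

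\emph{Step 1 (hypotheses).} Both $f$ and $P$ are absolutely monotonic on $[a,b]$ by assumption, and they agree on $\mathcal X = \{x_0, \dots, x_n\}$. Hence all assumptions of Corollary \ref{corollary absolutely monotonic max degree} are met.

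\emph{Step 2 (apply the corollary).} Since $\deg f \leq n-1$, we have $f^{(n)} \equiv 0$, in particular $f^{(n)}(x_n) = 0$. The corollary then yields, for every $x \in [x_0, x_n]$,
\[
|f(x) - P(x)| \leq (x_n - x_0)\,\frac{f^{(n)}(x_n)}{n!}\prod_{i=1}^{n-1}|x-x_i| = 0 .
\]

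\emph{Step 3 (conclude).} So $f - P$ vanishes on the interval $[x_0, x_n]$. This interval is nondegenerate, since $n \geq 1$ and the points are distinct. A polynomial vanishing on infinitely many points is the zero polynomial, so $P \equiv f$.

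The one point needing care is that the corollary's bound must really depend only on $f$'s derivatives and not on $P$'s. Its proof confirms this: the two Lagrange polynomials $P_{I_1}$ and $P_{I_2}$ sandwich both $f$ and $g$, and the distance between them is estimated using only $f^{(n)}$. Alternatively, one can argue directly. The Lagrange polynomials $P_{I_1}$ and $P_{I_2}$ on the $n$-point sets $[0:n-1]$ and $[1:n]$ both coincide with $f$, since $\deg f \leq n-1$. Because they sandwich $P$ on each subinterval, we get $P = f$ on $[x_0, x_n]$.

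No real obstacle is expected beyond noting that the degree of $P$ is unrestricted. That is exactly why the sandwich argument through $f$ alone is what makes the statement work.
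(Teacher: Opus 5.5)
Your proposal is correct and follows the paper's proof. Both use $f^{(n)}\equiv 0$ in the bound of Corollary \ref{corollary absolutely monotonic max degree} to get $f=P$ on $[x_0,x_n]$ and then extend the identity. The only difference is that you extend using the fact that a polynomial with infinitely many zeros vanishes, while the paper cites the identity theorem for analytic functions; since both functions are polynomials, your version is the more elementary of the two.
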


In particular, the minimum $\ell^1$-norm interpolant of an absolutely monotonic power series is unique if and only if the power series is in fact a polynomial (assuming sufficient data and high enough degree).

\begin{proof}
We have $f^{(n)}(\xi) = 0$ for all $\xi$, so $f\equiv g$ on $(x_0, x_n)$. The identity holds on the potentially larger interval $[a,b]$ since $f, P$ are analytic and coincide in a non-empty open set \cite[Theorem 2.4.8]{stein2010complex}.
\end{proof}

\begin{corollary}[Error estimate for Chebyshev points]
Let $[a,b]\subset [0,\infty)$, $f:[a,b]\to\R$ an analytic function with convergent power series $f(x) = \sum_{k=0}^\infty a_kx^k$ on $[a,b]$ where $a_k\geq 0$ for all $k\in\mathbb N_0$. Let $x_0, \dots, x_n$ Chebyshev nodes (of the first kind) in $[a,b]$. If $f^{(n)}(b)<\infty$ and $g$ is an absolutely monotonic function such that $f(x_i) = g(x_i)$ for $i=0,\dots, n$, then
\[
|f(x) - g(x)| \leq \frac{f^{(n)}(b)}{n!}\,\left(\frac{b-a}4\right)^{n}\,\frac{1}{(b-x)(x-a)}.
\]
\end{corollary}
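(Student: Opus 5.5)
This follows from Corollary \ref{corollary absolutely monotonic max degree} once the node product is controlled for Chebyshev points. That corollary gives, for $x\in[x_0,x_n]=[a,b]$ (with the nodes ordered increasingly),
\[
|f(x)-g(x)| \le (x_n-x_0)\,\frac{f^{(n)}(x_n)}{n!}\prod_{i=1}^{n-1}|x-x_i|.
\]
Here $x_n-x_0=b-a$ and $f^{(n)}(x_n)=f^{(n)}(b)$, using that the Chebyshev points including endpoints satisfy $x_0=a$, $x_n=b$. So everything reduces to bounding the interior node polynomial $\prod_{i=1}^{n-1}|x-x_i|$.

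The plan for this product is as follows.
\begin{itemize}
\item Write the full node polynomial $\omega(x)=\prod_{i=0}^n (x-x_i)$, so that $\prod_{i=1}^{n-1}|x-x_i| = |\omega(x)|/\big((x-a)(b-x)\big)$ for $x\in(a,b)$. This produces the factor $\frac{1}{(b-x)(x-a)}$ in the claim.
\item Bound $|\omega|$ using the extremal property of Chebyshev-type polynomials. For the nodes $x_i=\frac{a+b}2+\frac{b-a}2\cos(\pi i/n)$ (extrema points), substitute $x=\frac{a+b}2+\frac{b-a}2 t$. Then $\omega(x)=\left(\frac{b-a}2\right)^{n+1}\prod_i (t-\cos(\pi i/n))$.
\item Recall that $\prod_{i=0}^n(t-\cos(\pi i/n)) = (t^2-1)U_{n-1}(t)/2^{n-1}$, a monic polynomial of degree $n+1$ whose sup norm on $[-1,1]$ is at most $2^{1-n}$. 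This gives $|\omega(x)|\le 2\left(\frac{b-a}{2}\right)^{n+1}2^{-n}\cdot$(constant).
\end{itemize}

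Combining with the prefactor $b-a$, we want the total to collapse to $\left(\frac{b-a}4\right)^n$. The constants need care. It may be cleaner to bound the interior product directly via $|U_{n-1}(t)|\,\frac{1}{2^{n-1}}\left(\frac{b-a}{2}\right)^{n-1}$, using $(x-a)(b-x)=\left(\frac{b-a}2\right)^2(1-t^2)$.

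If instead first-kind nodes (zeros of $T_{n+1}$) are intended, then $\omega=\left(\frac{b-a}2\right)^{n+1}2^{-n}T_{n+1}(t)$ and $|\omega|\le 2\left(\frac{b-a}4\right)^{n+1}$. In that case the endpoints are not nodes. One then applies the argument of Corollary \ref{corollary absolutely monotonic max degree} with the interval $[x_0,x_n]$, noting that $f^{(n)}$ is bounded by $f^{(n)}(b)$ by monotonicity.

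The main obstacle is reconciling this bookkeeping. Specifically:
\begin{itemize}
\item which node convention is used, and whether $x_0=a$ and $x_n=b$;
\item the division by $(x-a)(b-x)$, which is only valid in the interior and is possibly lossy;
\item matching the exact constant $\left(\frac{b-a}4\right)^n$, where I expect factors of $2$ and $(b-a)$ to cancel against $(b-a)/(b-x)(x-a)$ only up to harmless slack.
\end{itemize}
I would fix the extrema-node convention consistent with the earlier bullet on Chebyshev nodes ($x_i=\frac{b+a}2+\frac{b-a}2\cos(\pi i/n)$), do the computation there, and absorb any leftover constant $\le 1$ in the final inequality.
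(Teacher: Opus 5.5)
Your route is the paper's: Corollary \ref{corollary absolutely monotonic max degree}, then the interior node product written as the full node polynomial divided by $(x-a)(b-x)$, then a Chebyshev sup-norm bound. The gap is the step you defer, ``absorb any leftover constant $\le 1$''. The leftover factor is not an absolute constant.

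Carry out your own computation for $x_i=\frac{a+b}2+\frac{b-a}2\cos(\pi i/n)$ and $x=\frac{a+b}2+\frac{b-a}2\,t$ with $t=\cos\theta$. You get
\[
\prod_{i=1}^{n-1}|x-x_i|=\Big(\frac{b-a}4\Big)^{n-1}\frac{|\sin n\theta|}{\sin\theta},\qquad (x-a)(b-x)=\frac{(b-a)^2}{4}\,\sin^2\theta .
\]
Use $\sin\theta\,|\sin n\theta|\le 1$ (with equality at $\theta=\pi/2$ for odd $n$, so there is no slack here) and the prefactor $x_n-x_0=b-a$. This gives
\[
|f(x)-g(x)|\le (b-a)^2\,\frac{f^{(n)}(b)}{n!}\Big(\frac{b-a}4\Big)^n\frac{1}{(b-x)(x-a)}.
\]
This is the claim only if $b-a\le 1$, e.g.\ for $[a,b]\subseteq[0,1]$.

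For first-kind nodes (zeros of $T_{n+1}$) the division step is not available at all. There $a<x_0$ and $x_n<b$, so $|x-x_0|\,|x_n-x|\le (x-a)(b-x)$. Dividing $|\omega|$ by $(x-a)(b-x)$ therefore \emph{underestimates} the interior product, and Corollary \ref{corollary absolutely monotonic max degree} only covers $x\in[x_0,x_n]$ anyway.

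The factor $(b-a)^2$ cannot be removed, because the inequality as stated fails on long intervals. Replace $f,g,[a,b]$ by $f(\cdot/L),g(\cdot/L),[La,Lb]$. This preserves every hypothesis (the Chebyshev nodes scale along, in either convention) and leaves the left-hand side unchanged at corresponding points. It multiplies the right-hand side by $L^{-2}$.

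Concretely, take $n=1$, $[a,b]=[0,3]$, nodes $0,3$ (your convention), $f(x)=x/3$ and $g(x)=x^2/9$. At $x=3/2$ the left-hand side is $\frac14$, while the right-hand side is $\frac13\cdot\frac34\cdot\frac49=\frac19$.

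The paper's proof reaches the stated constant only through two slips. First, it identifies the degree-$(n+1)$ node polynomial with $2^{-n}T_{n;a,b}$. For these nodes it is actually $(\frac{b-a}2)^{n+1}2^{1-n}(t^2-1)U_{n-1}(t)$, whose supremum is $(b-a)(\frac{b-a}4)^n$. Second, it silently drops the factor $x_n-x_0$ from Corollary \ref{corollary absolutely monotonic max degree}.

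What your argument legitimately proves is the display above, or the sharper scale-invariant form $|f(x)-g(x)|\le 4\,\frac{f^{(n)}(b)}{n!}\big(\frac{b-a}4\big)^n\,|U_{n-1}(t)|$. Either yields the stated bound under the extra hypothesis $b-a\le 1$. No bookkeeping will recover it in general.
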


The corollary in particular applies when $g$ is an interpolating polynomial of high degree and with $\ell^1$-norm minmial monomial basis coefficients.

\begin{proof}
The estimate follows directly by noting that $f^{(n)}$ is monotone increasing, so $\sup_{\xi \in (a,b)} f^{(n)}(\xi) = f^{(n)}(b)$ and
\[
\left|\prod_{i=1}^{n-1}(x-x_i)\right| = \frac{\left|\prod_{i=0}^n(x-x_i)\right|}{(b-x)(x-a)} = \frac{2^{-n}\,|T_{n;a,b}(x)|}{(b-x)(x-a)}\leq \frac{2^{-n}\,\big((b-a)/2\big)^n}{(b-x)(x-a)}
\]
where $T_{n;a,b}$ is the $n$-th Chebyshev polynomial of the first kind on $[a,b]$.
\end{proof}

Despite the use of Chebyshev points, the error estimate deteriorates at the boundary due to several crude approximations. Still, we find that at least in the interior of $(a,b)$ we should expect {\em exponentially fast} convergence of $P_{d,n}\to f$ as the number of data points increases, assuming that $d>n$ is large enough.

Next, we prove {\em polynomial order convergence} also at the interval boundary. To this end, we consider competitors far below the interpolation threshold and general data points. If $f^{(n)}$ remains uniformly bounded in $L^\infty(a,b)$ (e.g.\ for the exponential function), then we establish convergence of any polynomial order.

\begin{proof}[Proof of Theorem \ref{main theorem} for non-negative data]
The $L^1$-estimate follows directly from our proof of Corollary \ref{comparison corollary}.

For the $L^\infty$-estimate, note that if $x\in (0, x_J)$ and $J\geq m+1$, then we can find two index $I=\{j, \dots, j+m-1\}$ and $J=\{j+1,\dots, j+m\}$ of cardinality $m$ each such that $x\in [x_j, x_{j+m}] \subseteq[x_0, x_J]$. Without loss of generality $x\notin \{x_i: i\in [0:J]\}$. The polynomials $Q_I, Q_J$ take opposite signs at $x$, so $P_I \leq f, g\leq P_J$ for any two absolutely monotonic functions $f,g$ which coincide at all $x_j, \dots, x_{j+m}$. In particular, this applies to all minimum norm interpolants $P$ if $d$ is large enough. We conclude that
\[
|f-P| \leq |P_I - P_J| \leq |f-P_I| + |f-P_J| \leq \max_{\xi\in (0,\zeta)}\frac{f^{(m)}(\xi)}{m!}\left(\prod_{k=0}^{m-1}(x-x_{j+k}) +\prod_{k=1}^m (x-x_{j+k})\right).
\]
Since $f$ is absolutely monotonic, $f^{(m)}$ is increasing, i.e.\ the maximum is attained at $\zeta$. For the product, we note that the closed point to $x$ is at most $K/n$ removed, the next point $2K/n$, and so on. Thus either product has size at most $m!\,(K/n)^m$ as we multiply $m$ factor. The second claim of the Theorem follows immediately by considering $\zeta = 1$.

The third claim follows from the observation that
\begin{align*}
f^{(m)}(\zeta) &= \sum_{k=0}^\infty a_k\left(\prod_{j=0}^{m-1} (k-j)\right)\zeta^{k-m}
    \leq \|a\|_1 \sum_{k=0}^\infty \left(\prod_{j=0}^{m-1} (k-j)\right)\zeta^{k-m}
    = f(1)\frac{d^m}{d\zeta^m}\sum_{k=0}^\infty \zeta^k = \frac{m!\,f(1)}{(1-\zeta)^{m+1}}.
\end{align*}
\end{proof}

In general, the bound on the products $\prod_j (x-x_{k+j})$ is overly pessimistic: If $x$ is roughly in the middle of the interval, we can bound the product by a term scaling only like $(\lceil m/2\rceil !)^2$ rather than $m!$. Also the bound by $f(1)$ is pessimistic, bounding $\|a\|_\infty$ by $\|a\|_1$.

\begin{figure}
    \centering
    \begin{subfigure}[b]{0.32\textwidth}
        \centering
        \includegraphics[width=\linewidth]{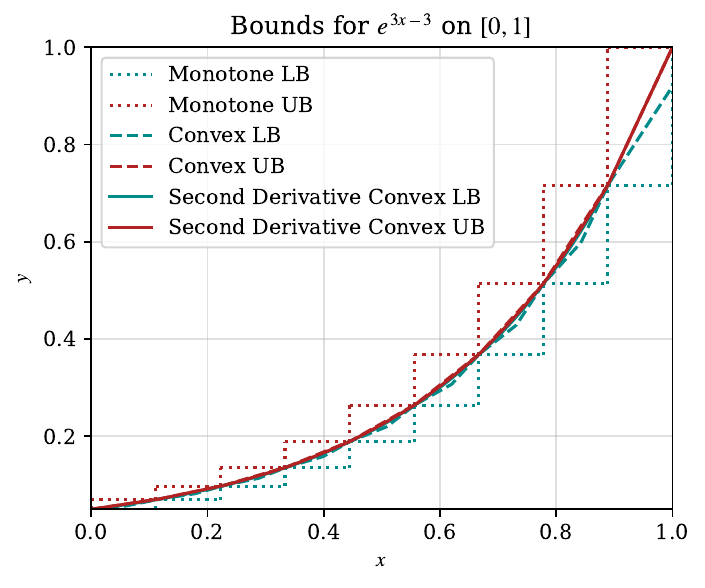}
    \end{subfigure}
    \hfill
    \begin{subfigure}[b]{0.32\textwidth}
        \centering
        \includegraphics[width=\linewidth]{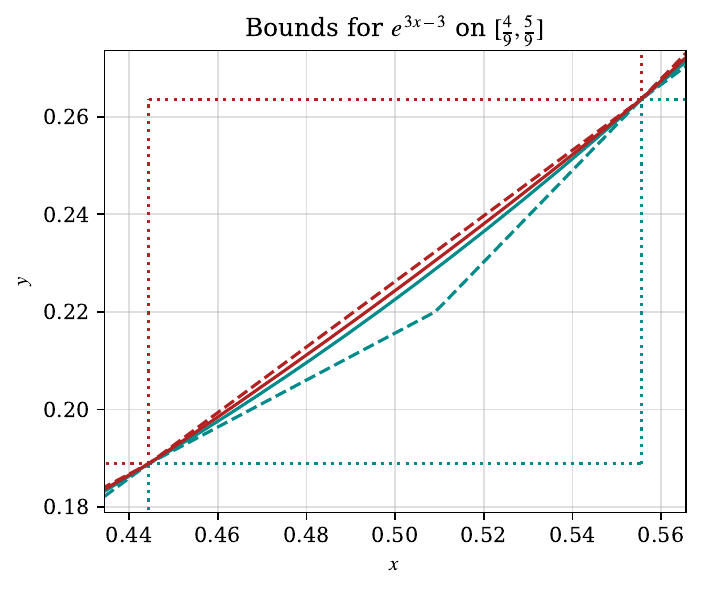}
    \end{subfigure}
    \hfill
    \begin{subfigure}[b]{0.32\textwidth}
        \centering
        \includegraphics[width=\linewidth]{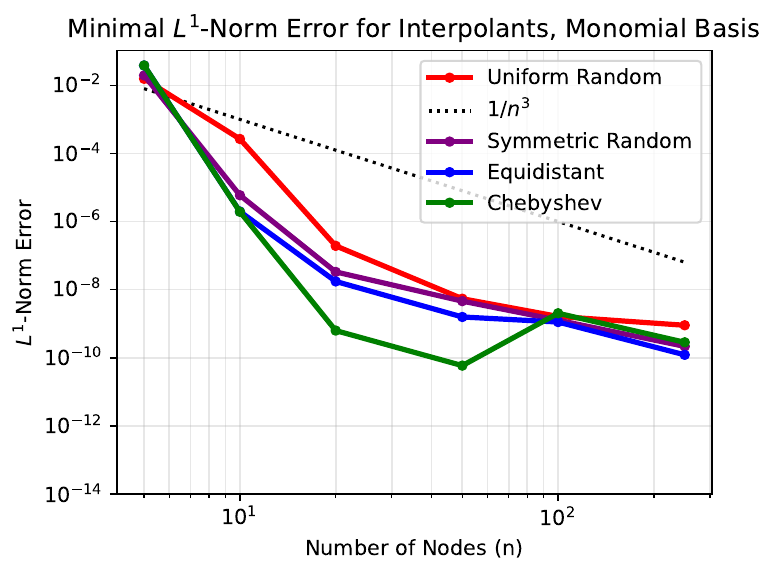}
    \end{subfigure}
    
    \vspace{2ex}

    \begin{subfigure}[b]{0.32\textwidth}
        \centering
        \includegraphics[width=\linewidth]{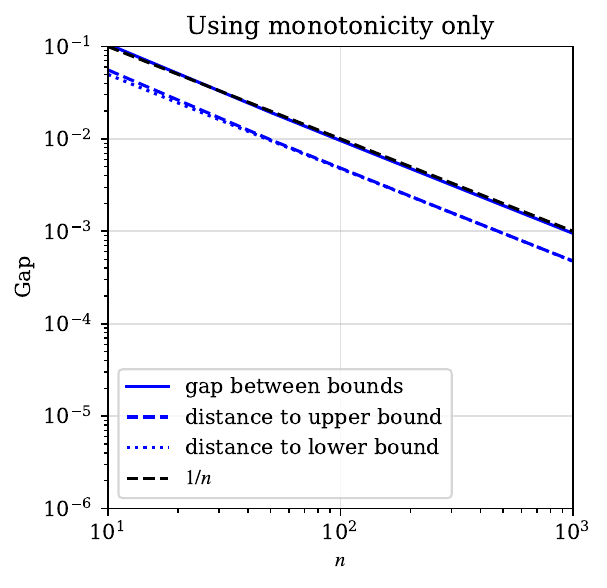}
    \end{subfigure}
    \hfill
    \begin{subfigure}[b]{0.32\textwidth}
        \centering
        \includegraphics[width=\linewidth]{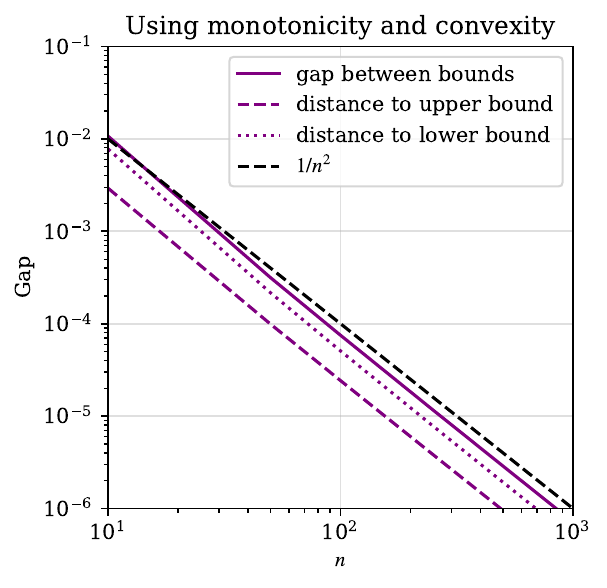}
    \end{subfigure}
    \hfill
    \begin{subfigure}[b]{0.32\textwidth}
        \centering
        \includegraphics[width=\linewidth]{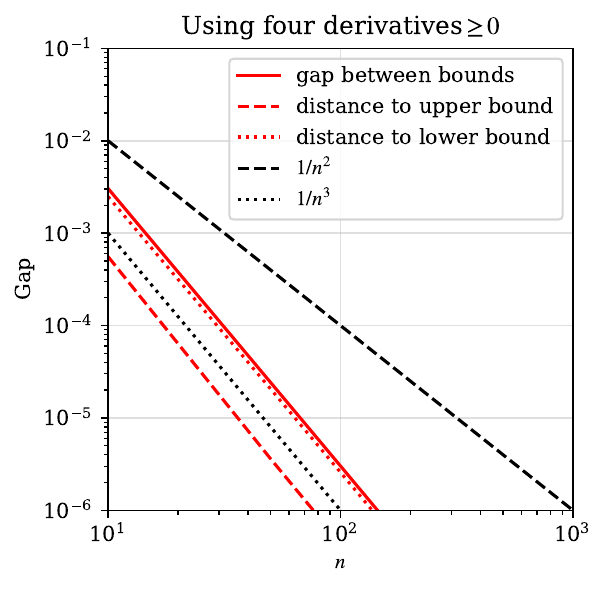}
    \end{subfigure}

    \caption{\label{figure geometrically constrained}
    {\bf First row:} The left $2$ graphs show the bounds in which interpolating functions with the monomial basis may exist with $10$ equidistant points in $[0,1]$ generated by the function $\exp(3x-3)$ using monotonicity, convexity, or positivity of three derivatives. The left graph is of the whole interval $[0,1]$ while the middle graph is zoomed in to $[\frac{4}{9}, \frac{5}9].$ The right graph shows the rates of convergence of a degree $d = 1200$ interpolating polynomial given $n \in \{5, 10, 20, 50, 100, 250\}$ (equidistant, Chebyshev, uniform random) data points  for the function $\exp(3x-3)$ in $[-1,1]$. For sufficiently large problems, the accuracy of the linear program solver appears to be governing approximation accuracy.
    {\bf Second row:} Convergence of bounds given monotonicity, monotonicity and convexity, and three derivatives being nonnegative respectively and the associated rates of convergence for comparison.}
\end{figure}

\subsection{Balanced data}

We can consider absolutely monotonic functions on general intervals $[a,b]$, but to link the results to polynomial interpolation, we were forced to consider $\mathcal X\subset [a,b]\subset [0,\infty)$ in \ref{section ell1 convergence}. In contrast, we only required that $1\in\mathcal X$ and placed no other restrictions on the dataset in Section \ref{section ell^1-norm existence}. In this section, we generalize the results of Section \ref{section ell1 convergence} to datasets which are {\em balanced} about the origin, i.e.\ $x\in \mathcal X \LRa -x\in \mathcal X$.

\begin{lemma} \label{lemma for balanced existence}
Let $f(x) = \sum_{k=0}^\infty a_kx^k$ be a power series such that $a_k\geq 0$ for all $k$ and $\mathcal X\subset \R$ a finite set such that $1\in\mathcal X$ and $f$ converges on $\mathcal X$. Then there exist $d\in \N$, $b\in \R^{d+1}$ such that $b_k\geq 0$ for all $k$ and $P(x) := \sum_{k=0}^d b_k x^k$ satisfies $P(x) = f(x)$ for all $x\in \mathcal X$.
\end{lemma}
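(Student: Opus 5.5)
The plan is to reuse the perturbation argument from the proof of Lemma \ref{existence lemma}, but without relying on invertibility of a generalized Vandermonde matrix. For points of both signs, the matrix $(x_i^{k_j})$ may well be singular. For instance, if only even coefficients $a_k$ are positive and $\pm x\in\mathcal X$, two rows coincide. Instead, I will work inside the subspace in which the truncation error actually lives. As before, if only finitely many $a_k$ are nonzero, $f$ is itself a polynomial and we take $P=f$. Otherwise let $S=\{k\in\N_0 : a_k>0\}$, which is infinite. Write $\mathcal X=\{x_0,\dots,x_n\}$ and $v_k=(x_0^k,\dots,x_n^k)^T\in\R^{n+1}$. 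Define the truncation $f_d(x)=\sum_{k\le d}a_kx^k$ and the error vector $\eps_d$ with entries $\eps_{i,d} = f(x_i)-f_d(x_i)=\sum_{k>d,\,k\in S}a_kx_i^k$.

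First I note that $\eps_d\to 0$ as $d\to\infty$, since the power series converges at each of the finitely many points $x_i$. Next, set $W_d=\mathrm{span}\{v_k : k\in S,\ k>d\}$. These subspaces of $\R^{n+1}$ are nested and decreasing in $d$, so their dimensions stabilize: there are $D$ and a subspace $W$ with $W_d=W$ for all $d\ge D$. Since $W_{D}=W_{D+1}=W$, I can choose indices $D<k_0<\dots<k_r$ in $S$ such that $v_{k_0},\dots,v_{k_r}$ form a basis of $W$. The key observation is that $\eps_d\in W_d$. This needs a short justification because $\eps_d$ is an infinite sum. It holds because $\eps_d=\lim_{N\to\infty}\sum_{d<k\le N,\,k\in S}a_kv_k$ and finite-dimensional subspaces are closed. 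Hence $\eps_d\in W$ for every $d\ge D$.

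Now take $d\ge k_r$ and set $b_k=a_k$ for $k\le d$, $k\notin\{k_0,\dots,k_r\}$, and $b_{k_j}=a_{k_j}+\delta_j$. The interpolation conditions become $\sum_j\delta_j v_{k_j}=\eps_d$. Since $\eps_d\in W$, this system has a unique solution. Let $L$ be a fixed left inverse of the full-rank matrix $V=(v_{k_0}|\cdots|v_{k_r})$. Then $\delta = L\eps_d$, so $\|\delta\|_\infty\le\|L\|_{op}\|\eps_d\|_\infty\to0$. Because $L$ does not depend on $d$, for $d$ large we get $|\delta_j|\le\min_j a_{k_j}$, hence $b_{k_j}\ge0$. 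The resulting $P=\sum_{k\le d}b_kx^k$ has non-negative coefficients and interpolates $f$ on $\mathcal X$.

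The main obstacle I expect is precisely the possible degeneracy of the generalized Vandermonde system for points of both signs. This is the step that the subspace-stabilization argument is designed to replace. The delicate point is choosing the basis exponents inside the tail beyond $D$, which makes the solvability of $V\delta=\eps_d$ automatic for all large $d$. The hypothesis $1\in\mathcal X$ is not needed for this existence statement. It matters only when the lemma is combined with Lemma \ref{min ell-1 interpolant} to identify the $\ell^1$-minimizers.
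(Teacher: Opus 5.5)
Your argument is correct, but it takes a different route from the paper's proof.

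\textbf{The paper's route.} It splits $f=f_e+f_o$ into even and odd parts, both with non-negative coefficients. It applies Lemma \ref{existence lemma} (invertibility of generalized Vandermonde matrices at non-negative nodes via total positivity) to $f_e$ and $f_o$ separately on the reflected set $(\mathcal X\cup-\mathcal X)\cap[0,\infty)$. Only even, resp.\ odd, exponents are perturbed, so $P_e$ is even and $P_o$ is odd. Symmetry then transfers the interpolation conditions from $|x|$ to $-|x|$, and $P=P_e+P_o$ works.

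\textbf{Your route.} You avoid both the symmetry and any sign structure of the nodes. Stabilization of the nested spans $W_d$, together with closedness of finite-dimensional subspaces, places $\eps_d$ in the range of one fixed full-rank matrix. A fixed left inverse then gives $\delta\to0$.

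\textbf{What your route buys.} You only use that the tails of the series vanish at the finitely many given points, so you prove the lemma exactly as stated. The even/odd route tacitly needs $f_e$ and $f_o$ to converge at $|x|$ for $x\in\mathcal X$, i.e.\ absolute convergence of $f$ on $\mathcal X$. That is automatic for $\mathcal X\subseteq[-1,1]$, since $1\in\mathcal X$ forces $\sum_k a_k<\infty$. It fails, e.g., for $f(x)=-\log(1-x/2)$ and $\mathcal X=\{1,-2\}$: $f$ converges conditionally at $-2$, but $f_e(2)=+\infty$.

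Your argument also absorbs degenerate nodes such as $x=0$ automatically, because the corresponding coordinate of every $v_k$ with $k\ge 1$, and of $\eps_d$, vanishes. By contrast, a square generalized Vandermonde matrix with positive exponents has a zero row at $x=0$. That node must be discarded by hand, which is harmless because the truncation error vanishes there.

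\textbf{What the paper's route buys.} It is shorter once Lemma \ref{existence lemma} is available. It gives a concrete invertibility statement in place of your non-explicit stabilization threshold $D$. It also produces an interpolant whose even and odd parts separately interpolate $f_e$ and $f_o$, mirroring the splitting used in the proof of Theorem \ref{main theorem} for balanced data.
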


\begin{proof}
Building on Lemma \ref{existence lemma}, we present an existence result which avoids generalized Vandermonde matrices for datasets which may contain negative points. Such matrices generally fail to be invertible since we cannot fit an odd function using even powers of $x$ only, for instance.

Decompose $f$ into its even and odd part
\[
    f_e(x)=\frac{f(x)+f(-x)}2=\sum_{k=0}^\infty a_{2k}x^{2k},\qquad f_o(x)=\frac{f(x)-f(-x)}2=\sum_{k=0}^\infty a_{2k+1}x^{2k+1}.
\]
Crucially, both $f_o$ and $f_e$ are analytic functions with non-negative coefficients in their power series.
We apply Lemma \ref{existence lemma} to $f_e(x)$ and $f_o(x)$ on $(\mathcal X \cup -\mathcal X)\cap [0,\infty)$ to attain interpolating polynomials $P_e(x)=\sum_{k=0}^{d_1}b_kx^k$ for $f_e$ and $P_o(x)=\sum_{k=0}^{d_2}c_kx^k$ for $f_o$ respectively. Following the proof of Lemma \ref{existence lemma}, we can choose coefficients $b_k, c_k$ for $P_e, P_o$ which are non-zero only if the corresponding power series coefficient of $f_e$ or $f_o$ respectively is non-zero. Thus, $P_e, P_o$ are even and odd respectively since the odd/even terms in their expansions vanish. 

We find that $P_e(x_i) = P_e(-x_i) = f_e(-x_i) = f_e(x_i)$ if $x_i \in \mathcal X\cap (-\infty,0]$ is a data point, using the fact that $-x_i \in (-\mathcal X) \cap [0,\infty)$ is in the set on which we prescribed interpolation conditions, so $f_e(-x_i) = P_e(-x_i)$. In particular, $P_e, P_o$ interpolate $f_e, f_o$ over the entire dataset $\mathcal X$, not just $\mathcal X\cap [0,\infty)$. Consequently $P:= P_e+P_o$ interpolates $f$ on $\mathcal X$. 
\end{proof}

A power series with non-negative coefficients may be absolutely monotonic also on $(-\infty,0]$ such as $\exp$, have derivatives of alternating sign such as $\exp$ or $\cosh$, or behave in a variety of complicated ways. We therefore continue treating the even and odd part of $f$ separately, since we can study their approximation properties easily on the positive half-axis. To identify them accurately, we require symmetry of our dataset. 
Specifically, we deduce closeness of the interpolants to the target function under the stronger condition that $\mathcal X$ is {\em balanced}, i.e.\ that $-\mathcal X = \mathcal X$.

\begin{proof}[Proof of Theorem \ref{main theorem} for balanced data]
Consider the even and odd parts of functions
\[
f_e(x) = \frac{f(x) + f(-x)}2, \quad f_o(x) = \frac{f(x) - f(-x)}2, \quad P_e(x) = \frac{P(x) + P(-x)}2, \quad P_o(x) = \frac{P(x) - P(-x)}2.
\]
As in Lemma \ref{lemma for balanced existence}, we find that the monomial coefficients of $f_e, f_o, P_e$ and $P_o$ are non-negative. Furthermore, $P_e$ interpolates $f_e$ on the dataset $\mathcal X$ since $\mathcal X$ is balanced, so
\[
P_e(x) = \frac{P(x) + P(-x)}2 = \frac{f(x) + f(-x)}2 = f_e(x) \qquad \forall\ x\in\mathcal X.
\]
Thus 
\begin{align*}
\|f-P\|_{L^\infty(-\rho, \rho)} &= \|f_e + f_o - (P_e+P_o)\|_{L^\infty(-\rho,\rho)}\\
    &\leq \|f_e - P_e\|_{L^\infty(-\rho,\rho)} + \|f_o - P_o\|_{L^\infty(-\rho,\rho)}\\
    &=  \|f_e - P_e\|_{L^\infty(0,\rho)} + \|f_o - P_o\|_{L^\infty(0,\rho)}\\
    &\leq f_e^{(m)}(1) \,\left(\frac Kn\right)^m + f_o^{(m)}(1)\,\left(\frac Kn\right)^m\\
    &= f^{(m)}(1)\,\left(\frac Kn\right)^m
\end{align*}
if $\rho = 1$ by applying the bounds of Theorem \ref{main theorem} on $[0,1]$. The other bounds generalize similarly.
\end{proof}

With the guarantees extended to balanced data, we compare interpolation on equidistant nodes and Chebyshev nodes in $[-1,1]$ in Figure \ref{figure norm decrease with degree}.

\begin{figure}
    \centering
    \begin{subfigure}[b]{0.48\textwidth}
        \centering
        \includegraphics[width=\linewidth]{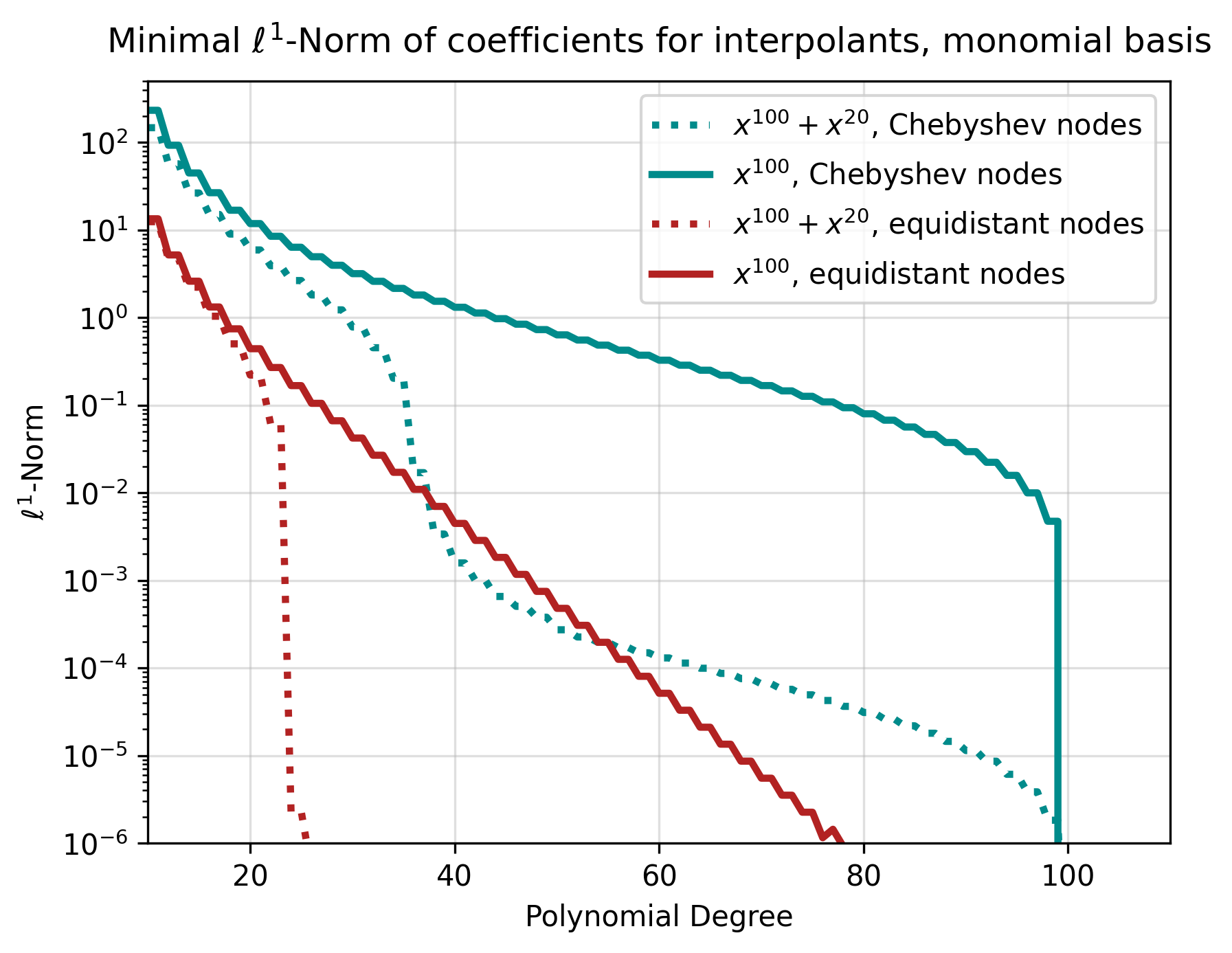}
    \end{subfigure}
    \hfill
    \begin{subfigure}[b]{0.48\textwidth}
        \centering
        \includegraphics[width=\linewidth]{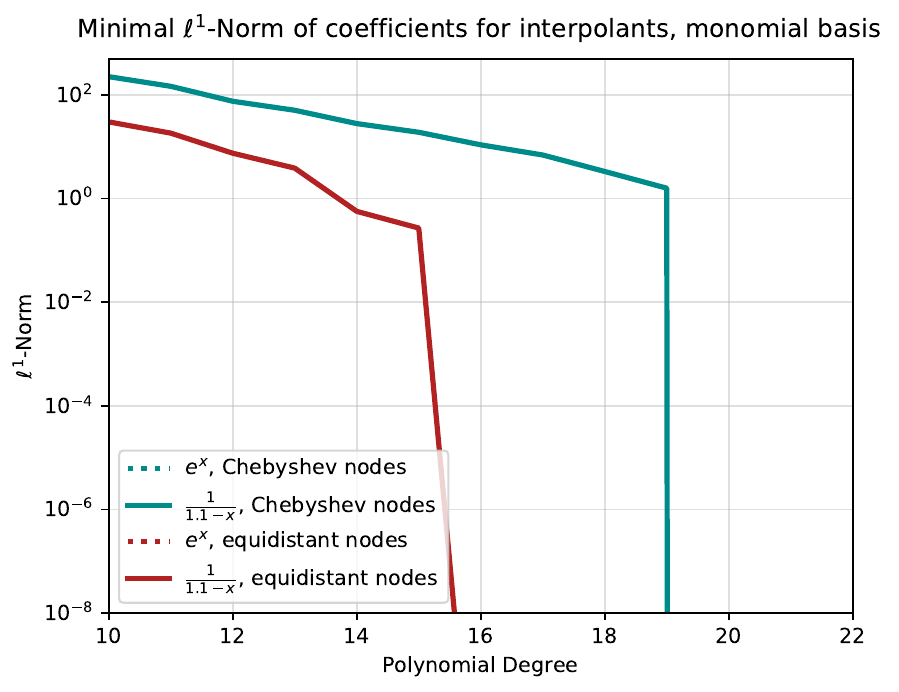}
    \end{subfigure}

    \caption{\label{figure norm decrease with degree}
    We plot $\min\{\|a\|_1 : a\in\R^{d+1} \text{ s.t. }\sum_{k=0}^da_kx^k_i = f(x_i)\} - f(1)$ as a function of $d$ on a semilog scale for functions $f_1(x) = x^{100}+ x^{10},\ f_2(x) = x^{100},\ f_3(x) = e^x$ and $f_4(x) = 1/(1.1-x)$. We explore interpolation constraints at both Chebyshev nodes and equidistant nodes, always with $11$ nodes in the interval $[-1,1]$. For $f_3(x)$, in the right plot, the values were too small to register on the scale of the plot, which was chosen above the threshold where optimizer errors for solving the linear program dominate.
    }
\end{figure}

\subsection{On relaxing the sign condition}

Naturally, Theorem \ref{main theorem} holds in spirit also if all coefficients are non-positive (replacing $f$ by $-f$) or if the even coefficients and odd coefficients have opposing signs and the dataset is balanced (replacing $f(x)$ by $\pm f(-x)$). In the spirit of the introduction it is natural to wonder whether a sign condition is in fact required, or if the coefficients indeed could be complex. We note that analytic difficulties are anticipated and point to the example of the sinc function in Figure \ref{figure monomial l1 and l2} to suggest that these may reflect true complications.

We note the following minimal asymptotic guarantee in the setting of taking $d\to+\infty$ separately before doing the same for $n$.

\begin{theorem}\label{theorem no sign condition}
Let $f:\mathbb D^\circ \to \mathbb C$ and $\mathcal Z_n = \{z_{n,0}, \dots, z_{n,n}\}$ a family of subsets of $\mathbb D^\circ$ and for $d\geq n$ denote
\[
a(d,n) \in \argmin\{ \|a\|_1 : a\in \mathbb C^{d+1} : P_a(z_i) = f(z_i)\:\text{ for } i \in[0:n]\}, \qquad P_a(z) = \sum_{k=0}^da_kz^k.
\]
Then, there exists $f_n(z) = \sum_{k=0}^\infty a^{(n)}_k z^k$ with $\|a^{(n)}\|\leq \liminf_{d\to\infty} \|a(d,n)\|_1 \leq \|a(n,n)\|_1$ such that a subsequence of $P_{a(d,n)}$ (not relabeled) converges to $f_n$ as $d\to\infty$
\begin{itemize}
    \item uniformly on $\overline{\mathbb D_r}$ for all $r<1$,
    \item pointwise at $\pm 1$, and
    \item weakly in $L^2(\partial \mathbb D)$.
\end{itemize}
Furthermore:
\begin{enumerate}
    \item If $f(z) = \sum_{k=0}^\infty c_kz^k$ with $\|c\|_1 <+\infty$, then $\|a^{(n)}\|_1\leq \|c\|_1$ for all $n\in\mathbb N$ and there exists $\tilde f(z) = \sum_{k=0}^\infty \tilde c_kz^k$ such that $\|\tilde c\|_1\leq \|c\|_1$ and $f^{(n)}\to g$ in the same notion of convergence.
    \item If $\mathcal Z_n$ converges in Hausdorff distance to a compact set $K\subseteq \overline{\mathbb D}$ which has an accumulation point in $\mathbb D^\circ$, then $\tilde f = f$.
\end{enumerate}
\end{theorem}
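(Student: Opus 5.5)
The plan is a compactness argument in $\ell^1$, viewed as the dual of $c_0$. Extend each $a(d,n)$ by zeros to an element of $\ell^1(\mathbb N_0)$. Since $\mathbb C^{d+1}\subset\mathbb C^{d+2}$ and the interpolation constraints are preserved under zero-padding, the minimal value $\|a(d,n)\|_1$ is non-increasing in $d$. Hence $\|a(d,n)\|_1\leq M_n:=\|a(n,n)\|_1$, and $\|a(n,n)\|_1$ is finite because the square Vandermonde system is solvable for distinct $z_{n,i}$. By Banach--Alaoglu (sequential, since $c_0$ is separable), I would first take a subsequence realizing $\liminf_{d}\|a(d,n)\|_1$. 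From it I would extract a further subsequence converging weak-$*$, equivalently coordinatewise on bounded sets, to some $a^{(n)}$. Weak-$*$ lower semicontinuity of the norm gives $\|a^{(n)}\|_1\leq\liminf_d\|a(d,n)\|_1\leq M_n$. Set $f_n(z)=\sum_k a^{(n)}_kz^k$.

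The three modes of convergence then follow from the coefficient bounds.
\begin{itemize}
\item Uniform convergence on $\overline{\mathbb D_r}$: split $\sum_k (a(d,n)_k-a^{(n)}_k)z^k$ at an index $N$. The tail is at most $2M_nr^N$, and the finitely many head coefficients converge.
\item Weak convergence in $L^2(\partial\mathbb D)$: by Parseval, $\|P_{a(d,n)}\|_{L^2(\partial\mathbb D)}=\|a(d,n)\|_2\leq M_n$. Coordinatewise convergence of Fourier coefficients plus boundedness gives weak convergence. The limit is the boundary function of $f_n$, since $a^{(n)}\in\ell^1\subset\ell^2$.
\item Pointwise convergence at $\pm1$: the values $P_{a(d,n)}(\pm1)$ are bounded by $M_n$, so a further subsequence converges.
\end{itemize}
The last item is where I expect the main obstacle. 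Pairing with $(1,1,\dots)$ or $((-1)^k)$ is not weak-$*$ continuous, because $\ell^1$-mass can escape to infinitely high degree. So identifying the limit with $f_n(\pm1)$ needs an argument that no mass escapes. I would try to obtain this from minimality: compare $a(d,n)$ with the competitor $a^{(n)}$, suitably truncated and corrected. If that fails, I would state the pointwise limit at $\pm1$ as existing and equal to $f_n(\pm1)$ plus an escaped-mass term.

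For (1), I would construct competitors. Truncate $c$ at degree $d$ and correct the coefficients of indices $0,\dots,n$ by solving the Vandermonde system with right-hand side the tail errors $f(z_{n,i})-\sum_{k\le d}c_kz_{n,i}^k$. These errors tend to zero as $d\to\infty$, so the correction has $\ell^1$-norm at most $\|V^{-1}\|\,o(1)$, exactly as in Lemma \ref{existence lemma}. This gives $\liminf_d\|a(d,n)\|_1\leq\|c\|_1$, hence $\|a^{(n)}\|_1\leq\|c\|_1$ uniformly in $n$. Repeating the weak-$*$ compactness argument in $n$ yields $\tilde c$ with $\|\tilde c\|_1\leq\|c\|_1$ and $f_n\to\tilde f$ in the same senses.

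For (2), I would use that uniform convergence on compact subsets of $\mathbb D^\circ$ preserves point values, so $f_n=f$ on $\mathcal Z_n$. The family $\{f_n\}$ is equi-Lipschitz on $\mathbb D_r$, since $|f_n'|\leq\|c\|_1/(1-r)^2$. Given $z\in K\cap\mathbb D^\circ$, choose $z_n\in\mathcal Z_n$ with $z_n\to z$ and estimate
\[
|\tilde f(z)-f(z)|\leq|\tilde f(z)-f_n(z)|+|f_n(z)-f_n(z_n)|+|f(z_n)-f(z)|\to0,
\]
using continuity of $f$. Thus $\tilde f=f$ on a set with an accumulation point in $\mathbb D^\circ$, and the identity theorem gives $\tilde f=f$ there, assuming $f$ is holomorphic.
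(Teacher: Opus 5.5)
Your proposal is correct and follows essentially the paper's proof. Both use weak-$*$ compactness in $\ell^1=c_0^*$ under the monotone bound $\|a(d,n)\|_1\leq\|a(n,n)\|_1$, the truncate-and-correct Vandermonde competitor of Lemma~\ref{existence lemma} for (1), a second compactness step in $n$, and the identity theorem for (2). Your direct tail estimate, Parseval argument and equi-Lipschitz bound stand in for the paper's Arzel\`a--Ascoli step, $L^\infty(\partial\mathbb D)$ bound and uniform-convergence argument.

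At $\pm1$ the paper also only extracts a convergent subsequence of the bounded values without identifying the limit, and the fix you sketch does work. Since $|z_{n,i}|<1$, the weak-$*$ limit $a^{(n)}$ is itself admissible. Truncating and correcting it gives $\lim_{d}\|a(d,n)\|_1=\|a^{(n)}\|_1$. Coordinatewise convergence together with convergence of $\ell^1$-norms then forces $\|a(d,n)-a^{(n)}\|_1\to0$ along the subsequence. So in fact $P_{a(d,n)}\to f_n$ uniformly on all of $\overline{\mathbb D}$, and in particular $P_{a(d,n)}(\pm1)\to f_n(\pm1)$.
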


The existence of a subsequential Hausdorff limit $K$ is  guaranteed by compactness.

An analogous statement for power series with $\ell^2$-bounded coefficients is stated in Theorems \ref{theorem hardy} and \ref{theorem hardy 2}. The current proof is a slightly more technical version of that proof since $\ell^1(\mathbb C)$ is not reflexive while $\ell^2(\mathbb C)$ is a reflexive and separable Hilbert space.

The notion of convergence is natural: For instance, the function $f_d(z) = z^d$ interpolates the data pairs $(0,0), (1,1)$ for all $d\in \mathbb N_0$ with minimal coefficient $\ell^1$-norm 1. Naturally $f_d\to 0$ locally uniformly inside the unit disk but merely weakly on the boundary where $z^d = (e^{i\phi})^d = e^{id\phi}$ and $|z^d| \equiv 1$. The choice of boundary points $\pm 1$ was arbitrary and any countable set could have been selected, utilizing a diagonal sequence argument.
We observe in the third line of Figure \ref{figure monomial l1 and l2} that the focus on merely locally uniform convergence may well be required and that boundary layers may develop without a sign condition on the power series coefficients. In particular, non-asymptotic error bounds as in Theorem \ref{main theorem} appear out of reach.

\begin{proof}
{\bf Step 1. Limit $d\to\infty$.} We embed $\mathbb C^{d+1}$ into $\ell^1(\mathbb C)$ as by $Ia = (a_0, \dots, a_d, 0, 0, \dots)$ and note that $\|Ia(d,n)\|_1 = \|a(d,n)\|_1 \leq \|a(n,n)\|_1$ for all $d\geq n$ since the class over which we take the infimum increases with $d$. Since $\ell^1$ is the dual space of the separable Banach space $c_0$ \cite[Exercise 7.33]{einsiedler2017functional}, the weak* topology of $\ell^1$ is locally metrizable \cite[Theorem 3.28]{brezis2011functional}. By the Banach-Alaoglu theorem \cite[Theorem 3.16]{brezis2011functional} there exists a subsequence of $Ia(d,n)$ which converges in the weak* topology to a limit $a^{(n)}$ as $d\to\infty$ with $\|a^{(n)}\|_1 \leq \liminf_{d\to\infty}\|a(d,n)\|_1$.

The pointwise convergence of $P_{a(d,n)}(z) \to f^{(n)}(z) = \sum_{k=0}^\infty a^{(n)}_kz^k$ for $|z|<1$ follows immediately from the definition of weak* convergence since
\[
P_{a(d,n)}(z) = \sum_{k=0}^d a(d,n)_k z^k = \sum_{k=0}^\infty \big(Ia(d,n)\big)_k z^k = \left\langle Ia(d,n), z^{\mathbb N}\right\rangle_{\ell^1;c_0}
\]
where $z^{\mathbb N} := (1,z,z^2,\dots) \in c_0$ since $|z|<1$. We note that the polynomials $P_{a(d,n)}$ have Lipschitz constants with uniform bounds on $\mathbb D_r$ since
\[
\max_{z\in\mathbb D_r}\big|P_{a(d,n)}'(z)\big| \leq \sum_{k=0}^d \big|ka(d,n)_k\,r^{d-1}\big| \leq \max_{k\in\mathbb N} k r^{k-1} \sum_{k=0}^d |a(d,n)_k| \leq C\, \|a(d,n)\|_1 \leq C\, \|a(n,n)\|_1.
\]
By the Arzela-Ascoli theorem, we can upgrade the pointwise convergence to uniform convergence on disks $\overline{\mathbb D_r}= \{z\in\C: |z|\leq r\}$ for any $r<1$ since the uniform Lipschitz bound implies equicontinuity of the sequence and thus uniform convergence of a subsequence. As this holds for any subsequence as well, the entire sequence converges uniformly.

On the boundary of the disk, we note that 
\[
\big|P_{a(d,n)}(z)\big| =\left|\sum_{k=0}^d a(d,n)_k z^k\right| \leq \sum_{k=0}^d \big|a(d,n)_k\big| = \|a(d,n)\|_1 \leq \|a(n,n)\|_1,
\]
so $\|P_{a(d,n)}\|_{L^\infty(\partial \mathbb D)} \leq \|a(n,n)\|_1$ for all $d\geq n$. Since $\partial\mathbb D$ has finite measure, we conclude that for any $q<\infty$, we can find a further subsequence of $P_{a(d,n)}$ which converges weakly in $L^q(\partial \mathbb D)$. Similarly, we can take a further subsequence which converges pointwise at $-1$ and $1$ on the boundary (or indeed, on any given finite subset of $\partial \mathbb D$).

{\bf Step 2. Limit $n\to\infty$.} The functions 
\[
f_n(z) = \sum_{k=0}^\infty a^{(n)}_k z^k, \qquad f(z) = \sum_{k=0}^\infty c_k z^k
\]
coincide on the set $\mathcal Z_n$ and $f_n$ is approximated by a sequence of polynomials which satisfy a minimality condition on the $\ell^1$-norm. Arguing like in Lemma \ref{existence lemma}, if the degree of the polynomial is high enough, $\|a(d,n)\|_1$ is not much larger than $\|c\|_1$, so we conclude that $\|a^{(n)}\|_1 \leq \|c\|_1$. As in the previous step, $f_n$ has a subsequence which converges suitably to a limit $\tilde f$. 

Assume that $z_n\in \mathcal Z_n$ converges to a limit point $z\in \mathbb D^\circ$. Then $z\in \mathbb D_r$ for some $r<1$ and all but finitely many points $z_n$ lie in the slightly larger disk $\mathbb D_{(1+r)/2}$ on which $f_n \to \tilde f$ uniformly. Hence 
\[
\widetilde f(z) = \lim_{n\to \infty} \tilde f(z_n) = \lim_{n\to \infty} \big(\big\{\tilde f(z_n) - f_n(z_n)\big\} + f_n(z_n)\big) = \lim_{n\to\infty} f_n(z_n) = \lim_{n\to \infty} f(z_n) = f(z).
\]
The Hausdorff limit $K$ is the set of all limit points of convergent sequences $z_n\in\mathbb D$ such that $z_n\in \mathcal Z_n$, so we conclude that $\tilde f \equiv f$ on $K \cap \mathbb D^\circ$. Since both $f$ and $\tilde f$ are analytic, they coincide if $K\cap \mathbb D^\circ$ has an accumulation point inside $\mathbb D^\circ$ \cite[Theorem III.3.2]{busam2009complex}.
\end{proof}

\subsection{Numerical Illustration}

For all experiments, Moore-Penrose solutions (i.e.\ least squares best solutions of minimal $\ell^2$-norm) were found by the {\tt numpy.linalg.lstsq} method \cite{harris2020array}. In the monomial basis, $\ell^1$-minimal solutions were found using the {\tt scipy.optimize.linprog} method \cite{2020SciPy-NMeth}. In particular in cases where positive coefficient solutions do not exist, the problem was cast as the linear program
\[
\min\sum_{k=0}^d a_k + b_k \qquad\text{subject to}\quad \begin{pde}
a_k,\ b_k &\geq 0 &\forall\ k = 0, \dots, d\\
\sum_{k=0}^d (a_k-b_k)x_i^k &= y_i & \forall\ i = 0, \dots, n.
\end{pde}
\]
If positive coefficient solutions exist, they automatically correspond to minimum norm solutions and the $b_k$ variables can be eliminated as $b\equiv 0$.

In Figure \ref{figure monomial l1 and l2}, we visualize the interpolating polynomials with $\ell^1$- and $\ell^2$-minimal coefficients interpolating a set of 15 data points generated by a function $f$ for varying degrees. In the first line, we consider the dilogarithm function 
\[
Li_2(x) = - \int_0^x \frac{\log(1-s)}s\ds = \sum_{k=1}^\infty \frac{x^k}{k^2}
\]
which has summable positive coefficients, but whose derivative at the boundary point $x=1$ becomes infinite. The conditions of Theorem \ref{main theorem} are therefore met without satisfying strong additional bounds. The second function is a monomial which fails to be absolutely monotonic for $x<0$. The third function is a scaled sinc function $\sin(3x)/(3x)$, which has a globally convergent power series, but whose coefficients change sign. For this function, the assumptions of Theorem \ref{main theorem} are {\em not} met and indeed, we observe empirically polynomial interpolants of minimal norm develop boundary layers as the degree increases. We take this as an indication that the sign condition in Theorem \ref{main theorem} is not just an artifact of the proof, but indeed required for non-asymptotic error bounds, at least at the edge of the interval.

Out of general curiosity, we also include interpolating polynomials whose coefficients are $\ell^2$-minimal rather than $\ell^1$-minimal in Figures \ref{figure monomial l1 and l2} and \ref{figure overparametrized runge} -- see Section \ref{section monomial ell2} for further background.

\begin{figure}
    \centering
    \includegraphics[width=0.24\linewidth]{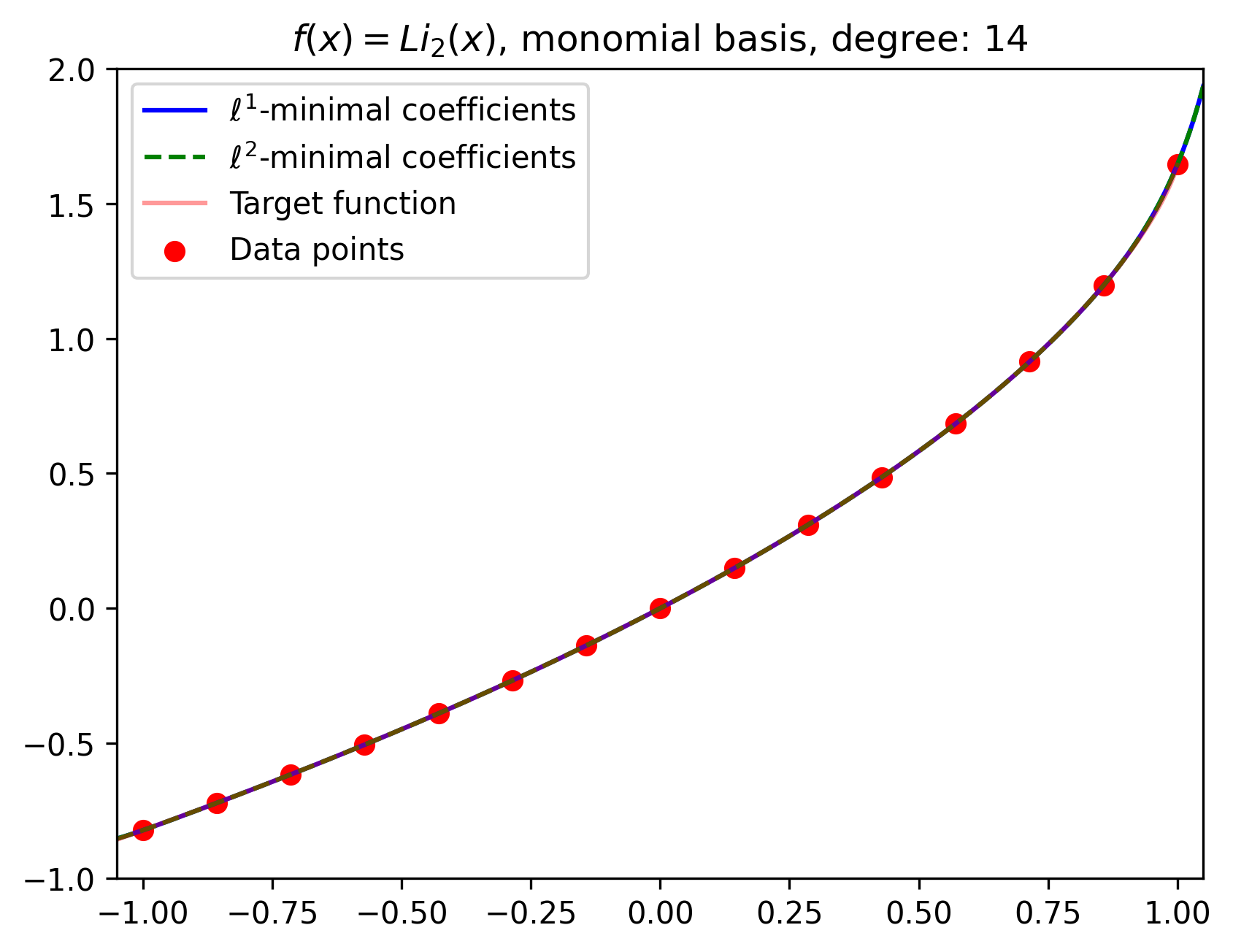}
    \includegraphics[width=0.24\linewidth]{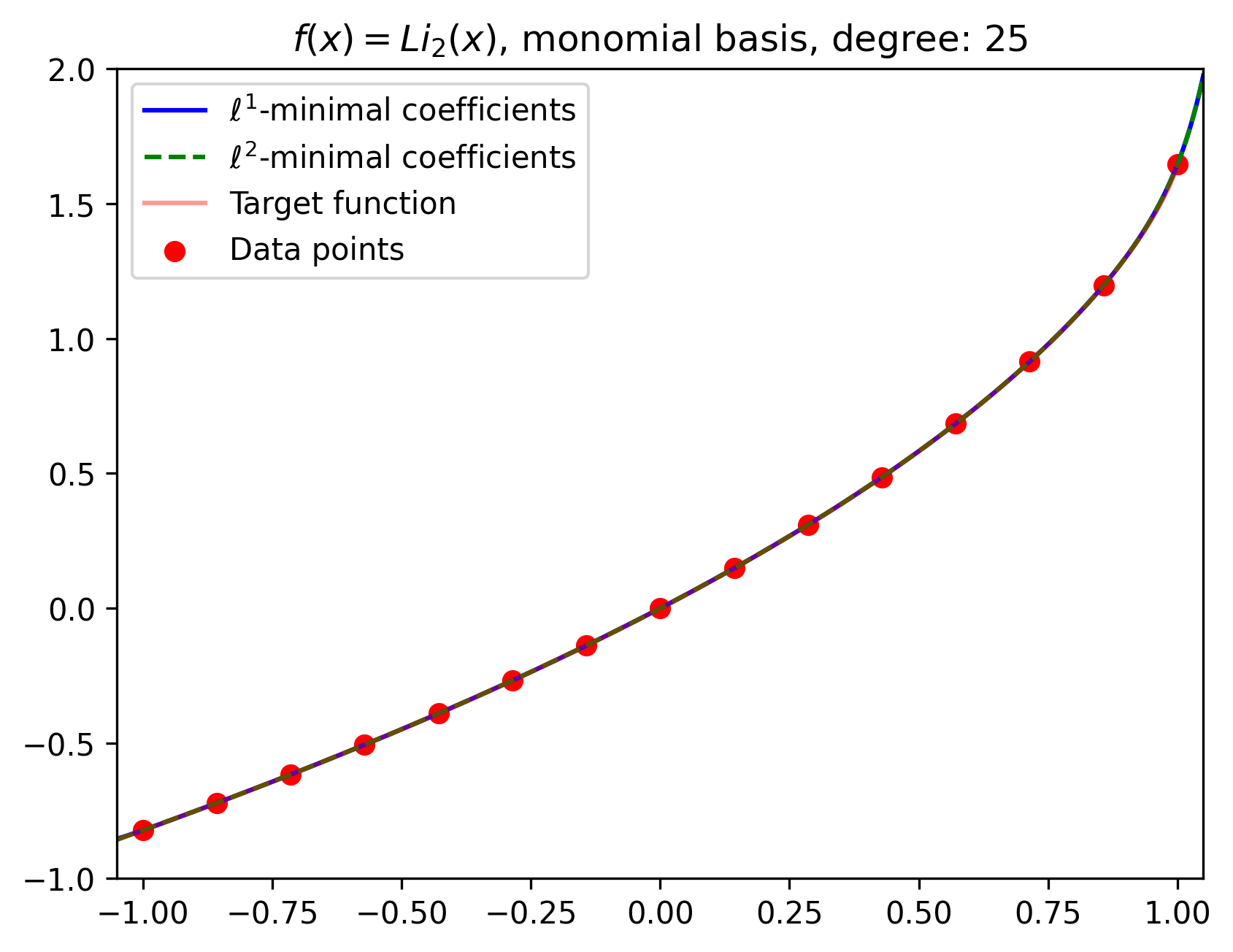}
    \includegraphics[width=0.24\linewidth]{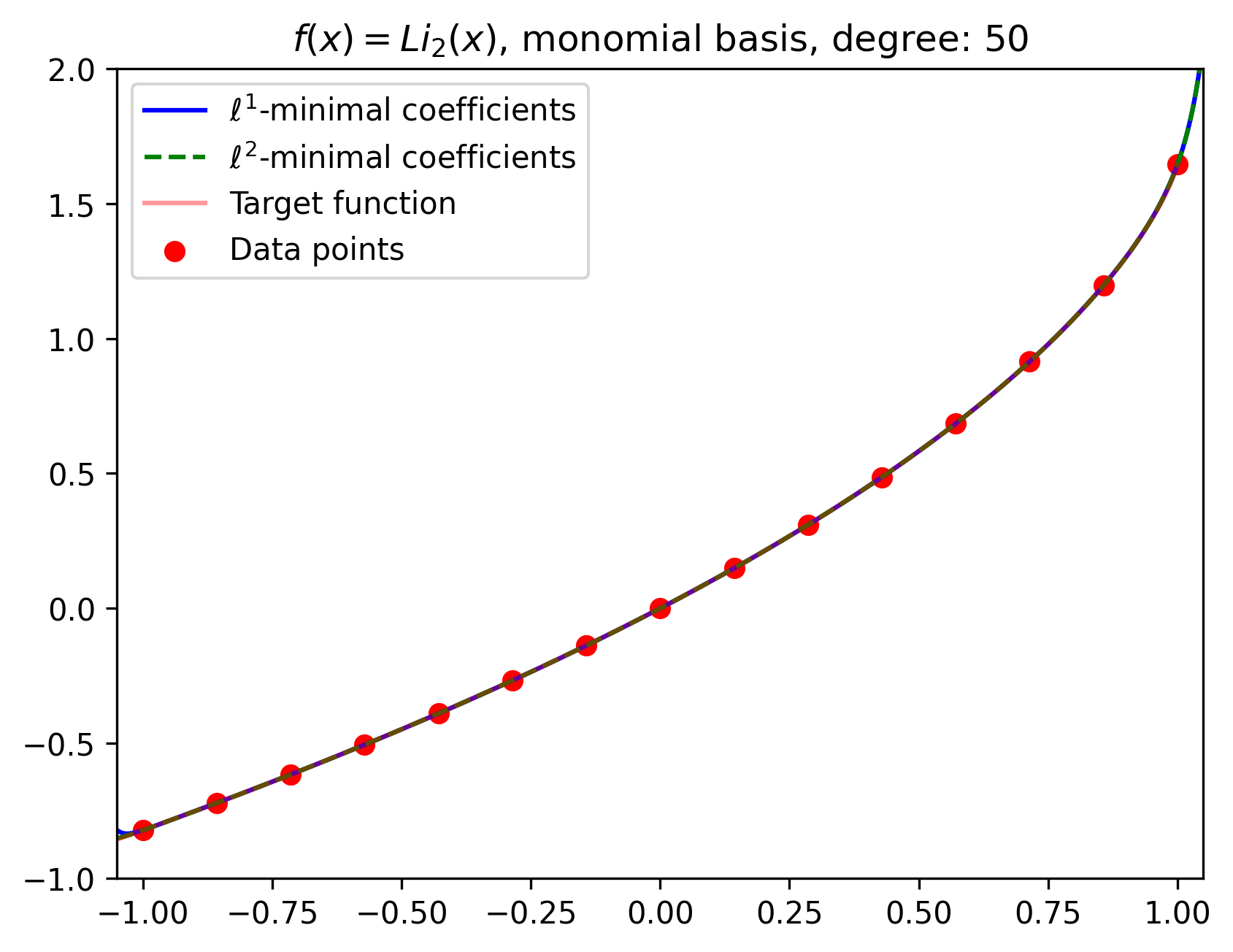}
    \includegraphics[width=0.24\linewidth]{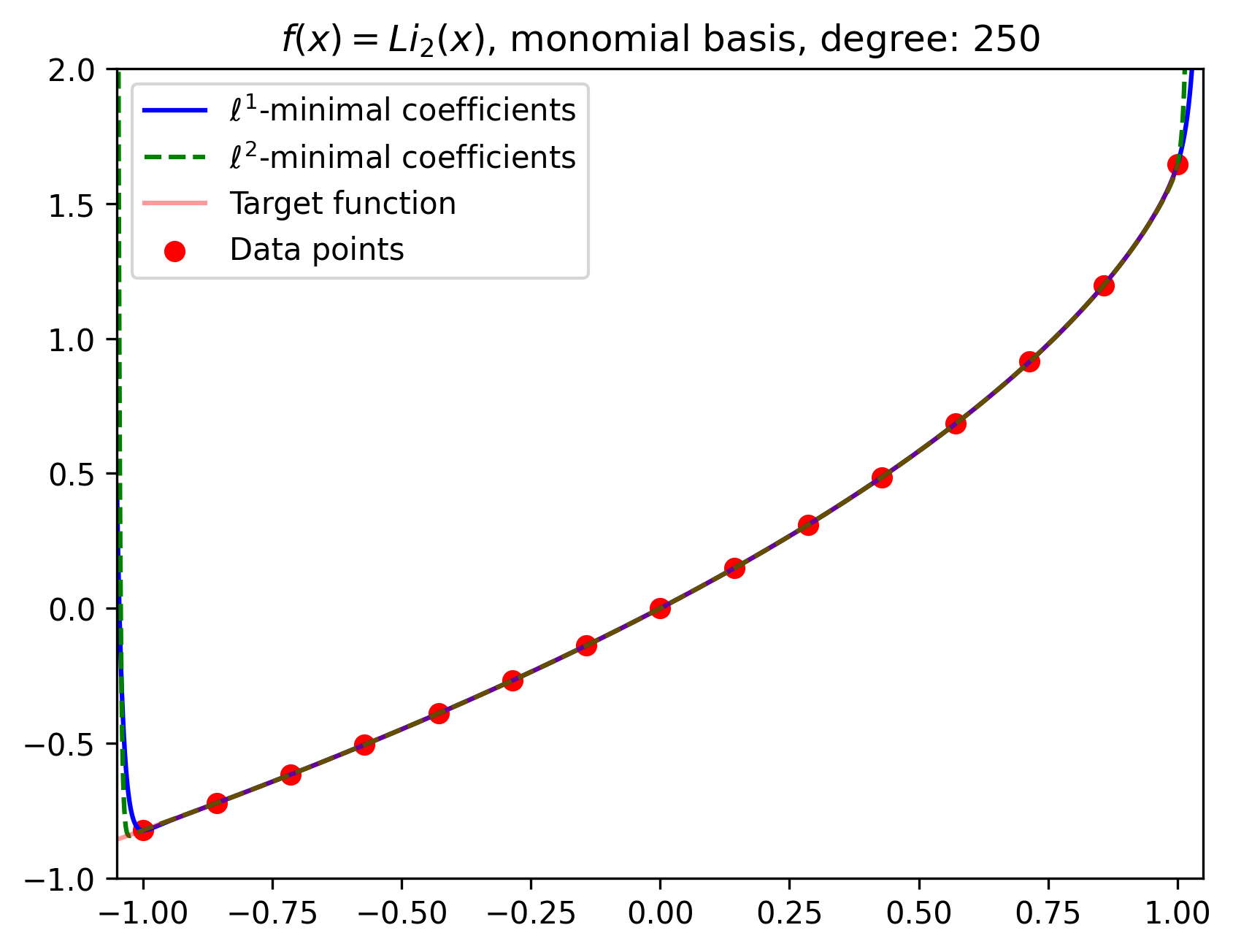}

    \includegraphics[width=0.24\linewidth]{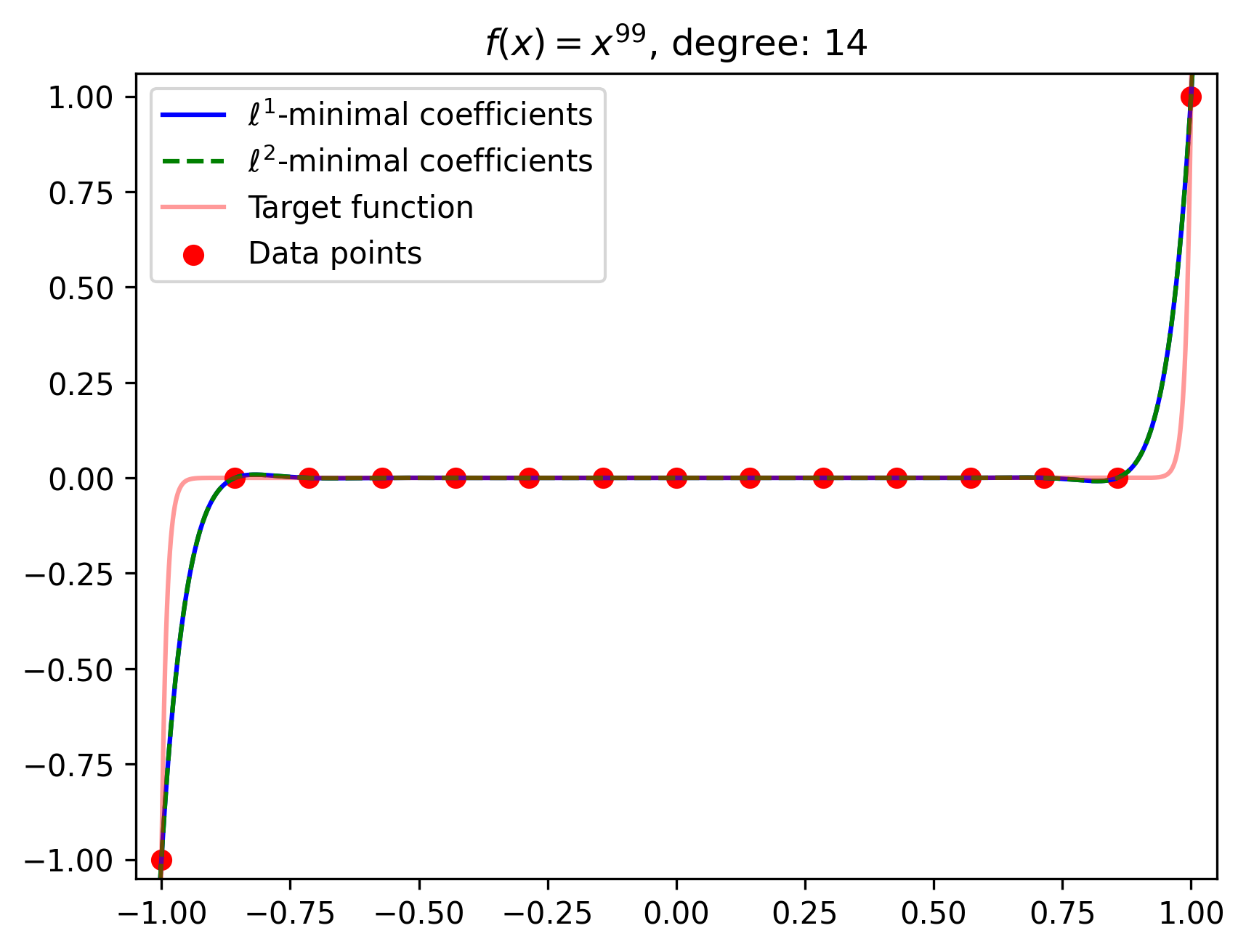}
    \includegraphics[width=0.24\linewidth]{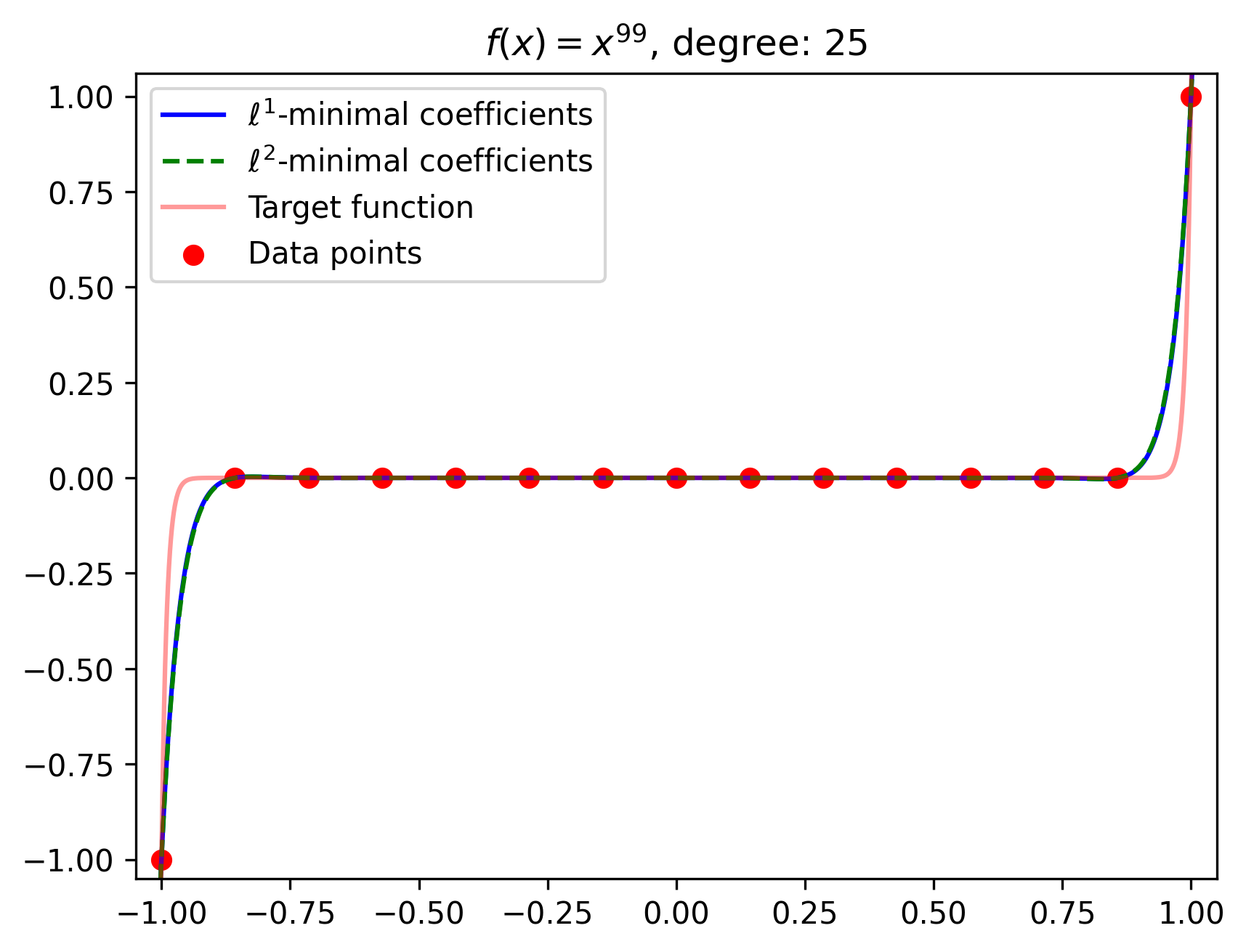}
    \includegraphics[width=0.24\linewidth]{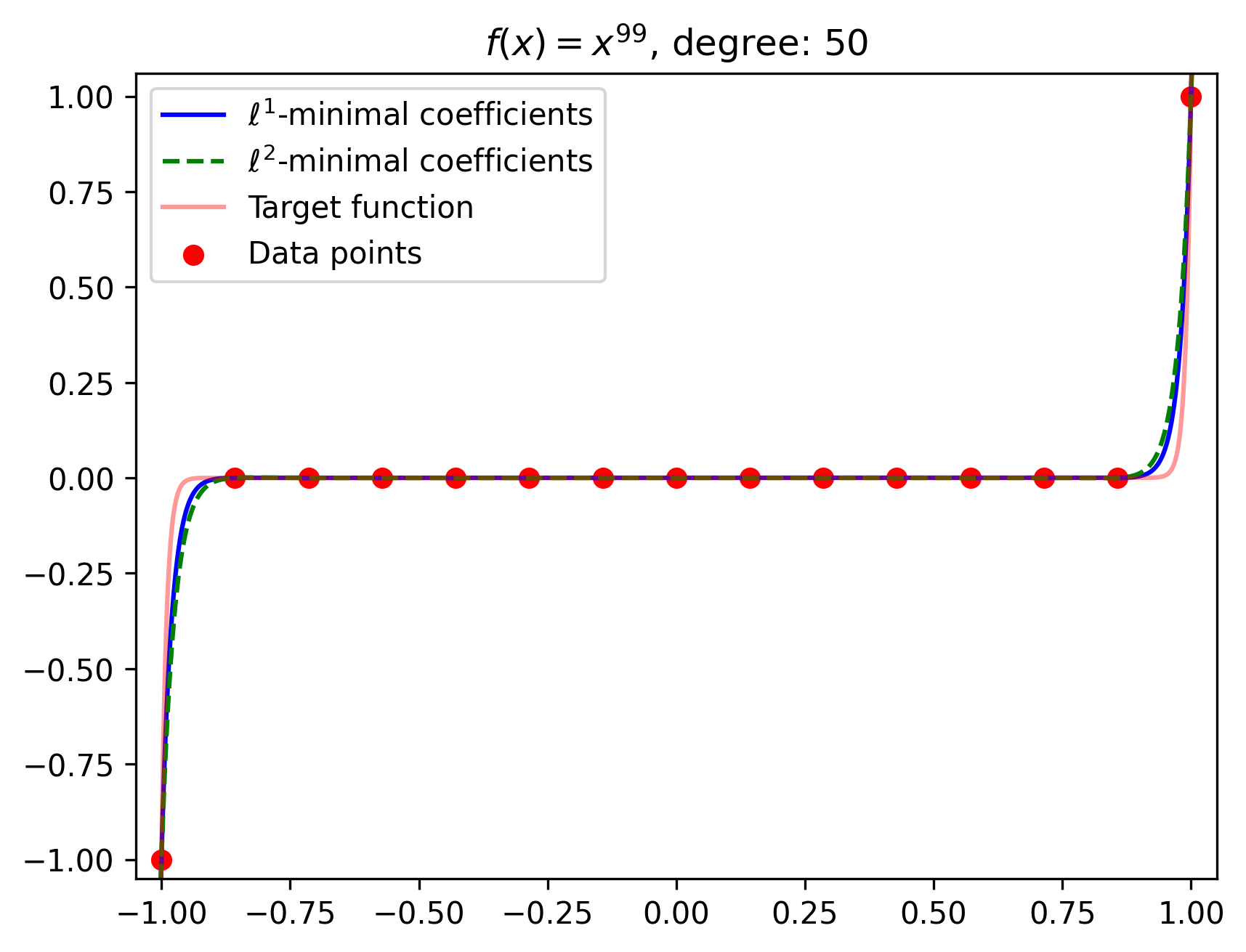}
    \includegraphics[width=0.24\linewidth]{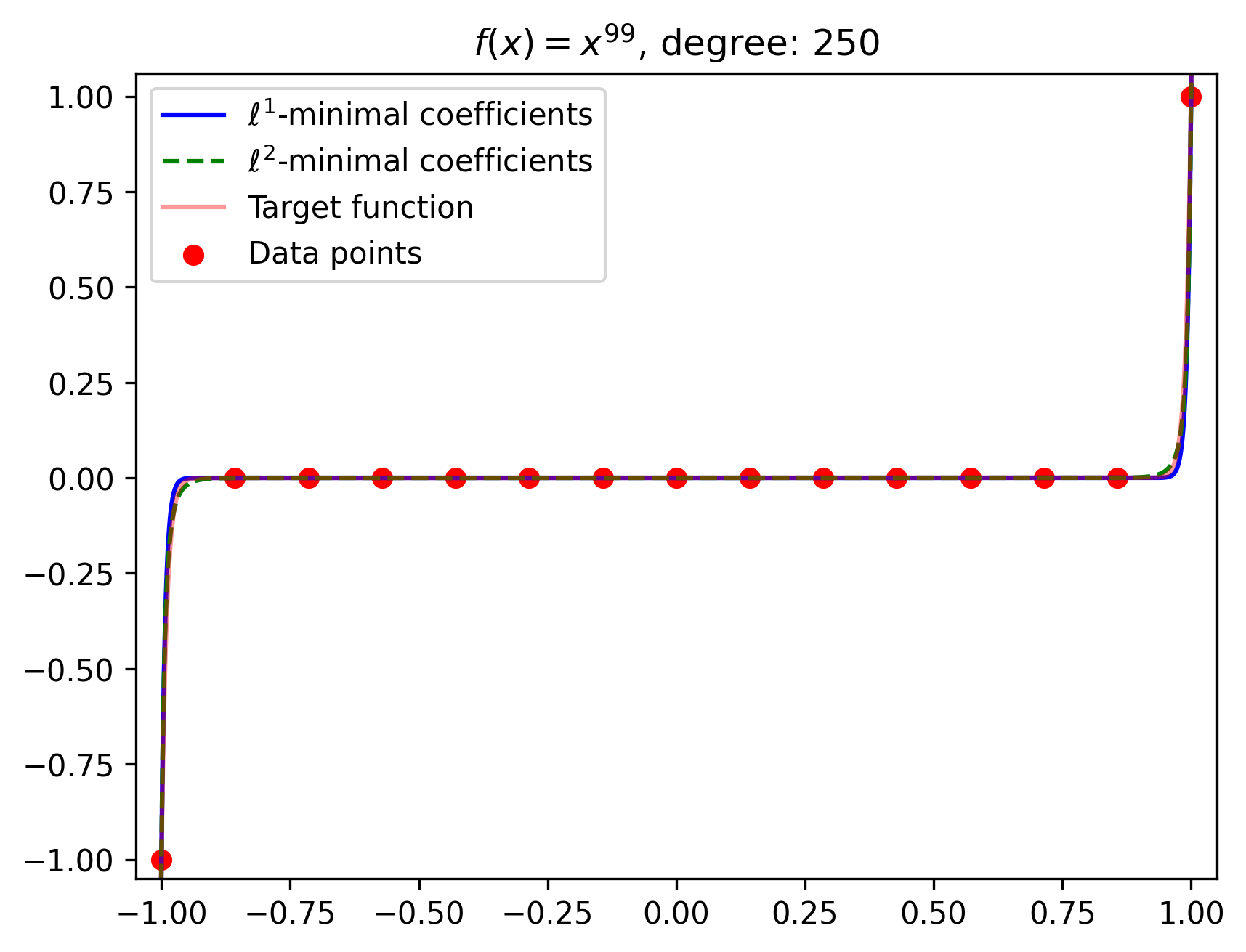}

    \includegraphics[width=0.24\linewidth]{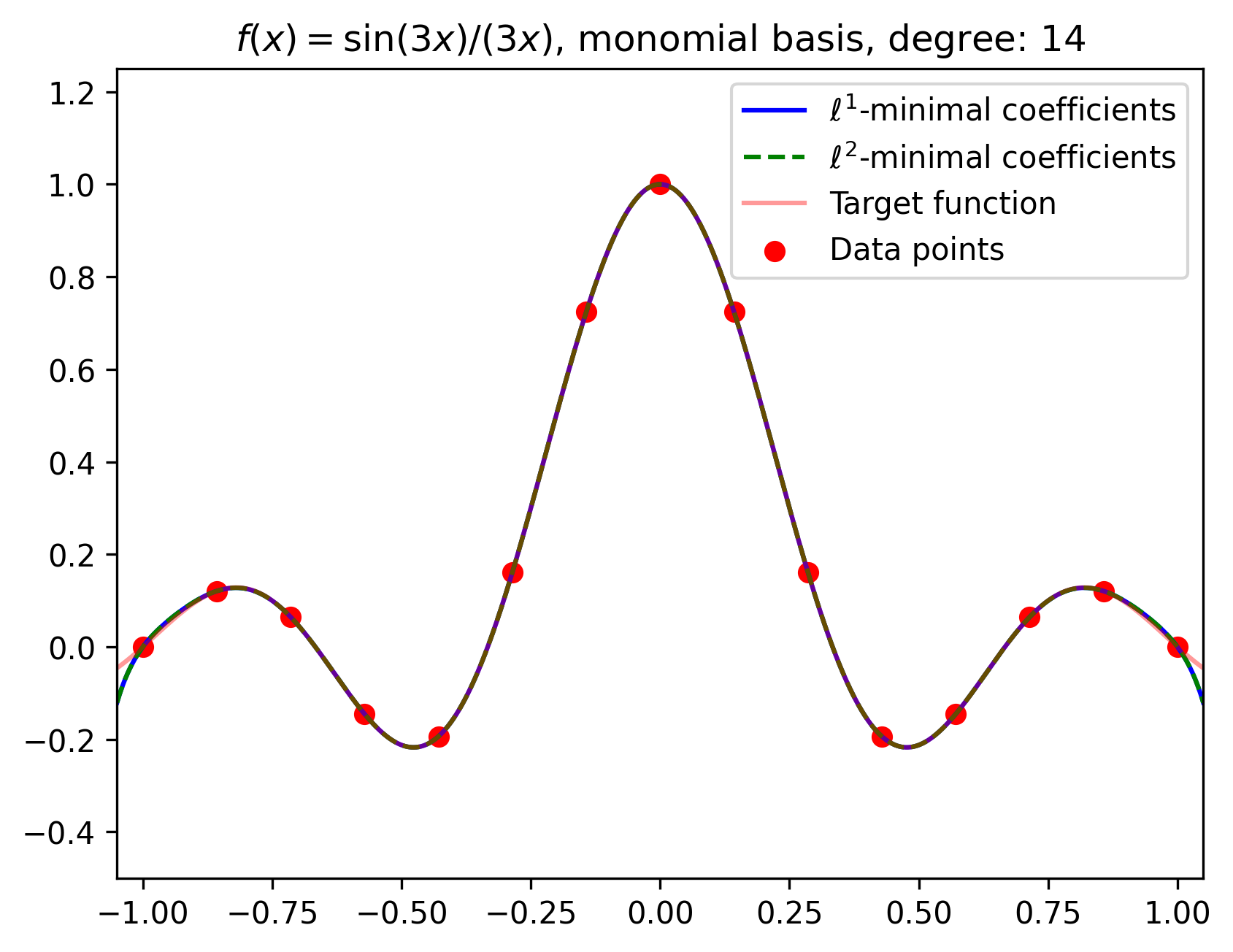}
    \includegraphics[width=0.24\linewidth]{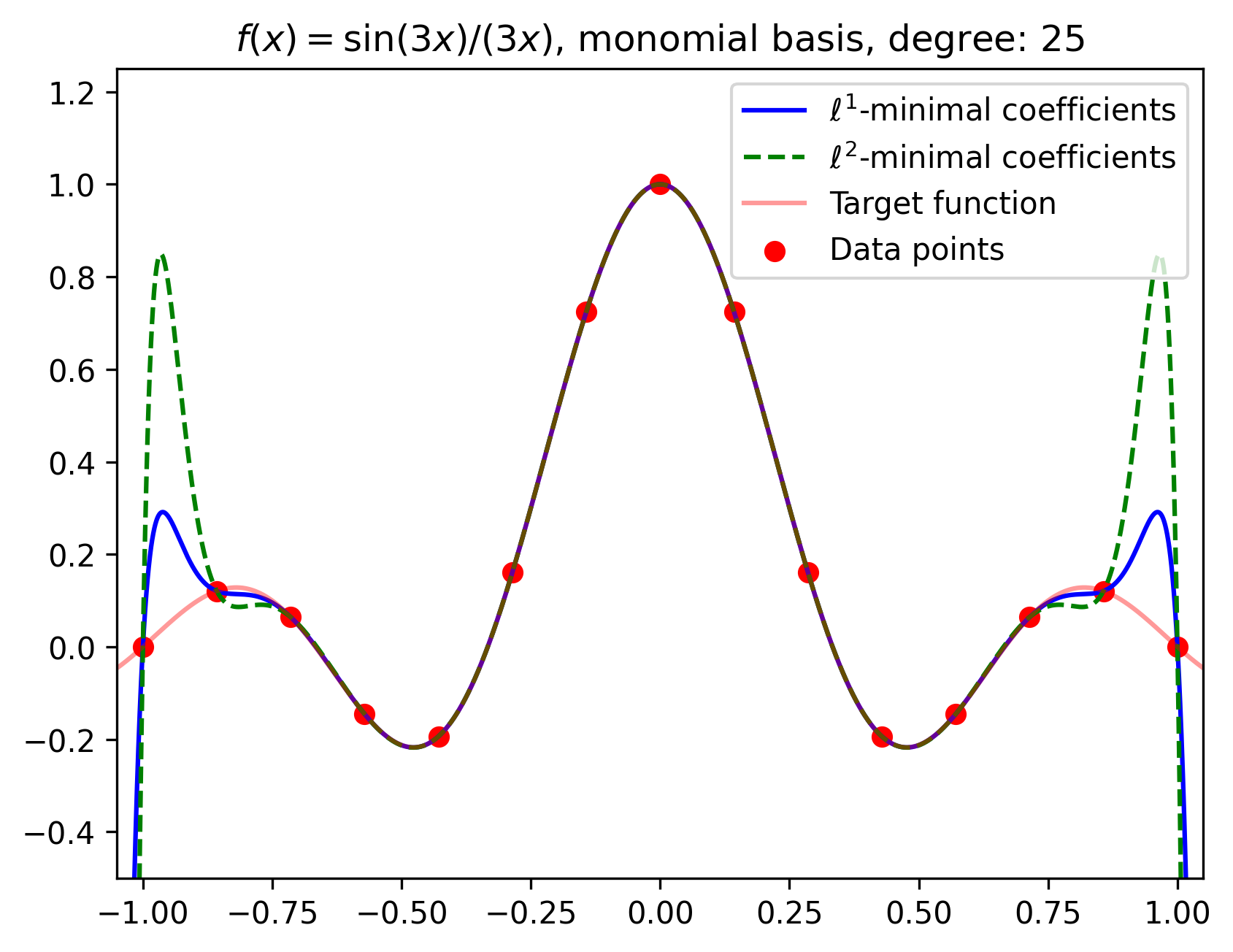}
    \includegraphics[width=0.24\linewidth]{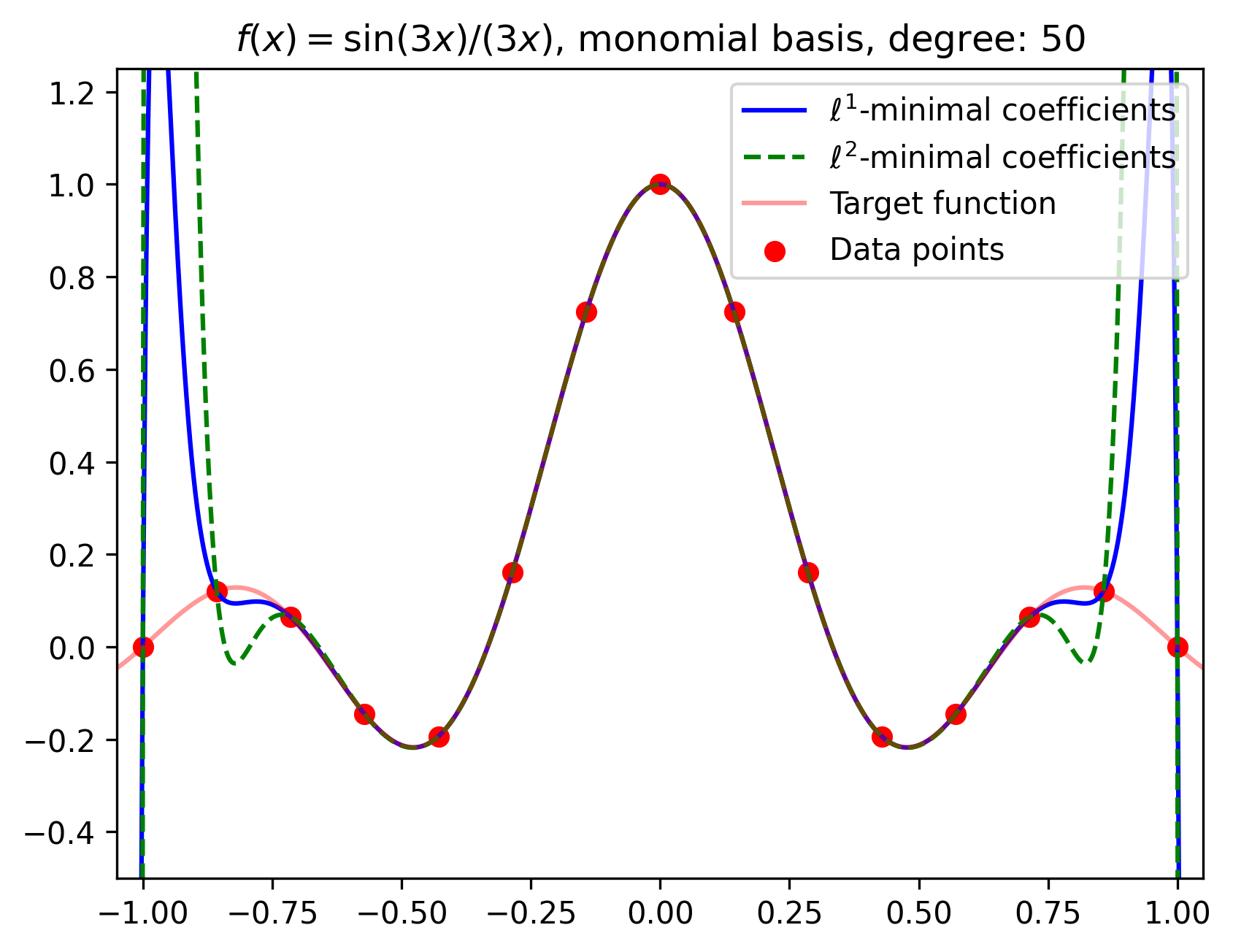}
    \includegraphics[width=0.24\linewidth]{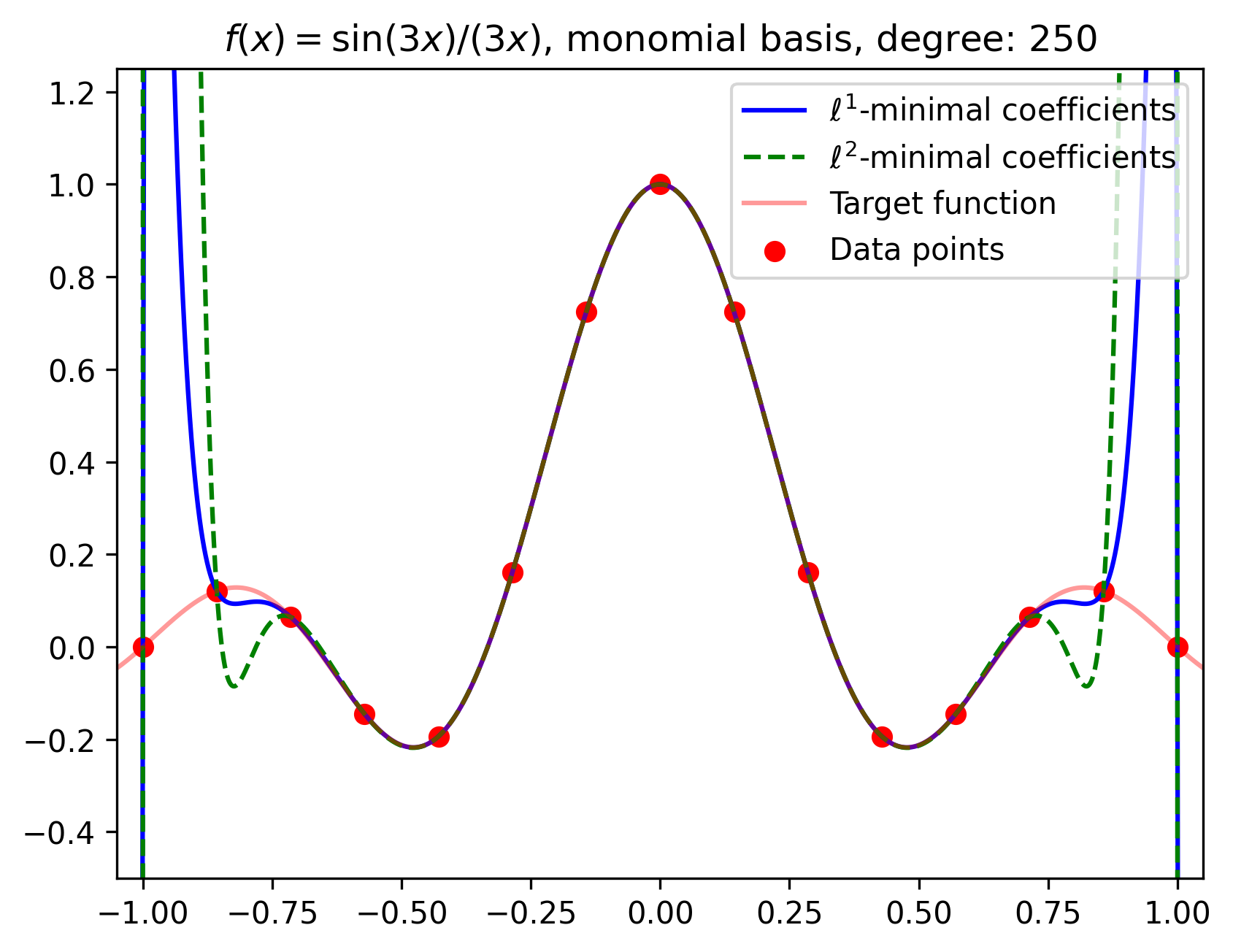}
    
    \caption{\label{figure monomial l1 and l2}
    We compare interpolating polynomials of degree $d$ with minimal monomial basis coefficients in the $\ell^1$- and $\ell^2$-sense for $n=15$ data points $(x_i, f(x_i))$ with equidistant $x_i$ in the interval $[-1,1]$. Left to right, the degree of interpolants increases from 14 (unique interpolating polynomial) to 25, 50 and 250. Top to bottom, we consider the target functions $f_1(x) = Li_2(x)$, $f_2(x) =x^{99}$ and $f_3(x) = \mathrm{sinc}(3x) = \sin(3x)/(3x)$.\\ In the top line, high degree interpolants are visually indistinguishable from the target function between the initial and final data points, but change rapidly outside the given interval, just below $-1$. In the second line, we see that the overparametrized interpolants outperform Lagrange polynomials. In the third line, both interpolants develop large boundary oscillation at the interval edge. 
    }
\end{figure}

In Figure \ref{figure overparametrized runge}, we present empirical evidence that overparametrized polynomials with minimal basis coefficients (in the $\ell^1$- or the $\ell^2$-sense) may fail to converge to target functions even in settings where the Lagrange polynomial (the unique minimal degree interpolating polynomial) is guaranteed to converge. Such a convergence guarantee holds for instance when $y_i = f(x_i)$ for a $C^2$-function $f:[a,b]\to\R$ if the points $x_i = (a+b)/2 + (b-a)/2 \,\cos(\pi i/n)$ are Chebyshev points in $[a,b]$ \cite[Chapter 7]{trefethen2019approximation}. For both the absolute value function $f(x) = |x|$ and the Runge function $f(x) = 1/ (1+25x^2)$ on $[-1,1]$, we observe that the interpolating solutions develop large boundary oscillations if the admissible degree is sufficiently large compared to the number of data points. We do not prove rigorously that a Runge type phenomenon occurs.

\begin{figure}
    \centering
    \includegraphics[width=0.24\linewidth]{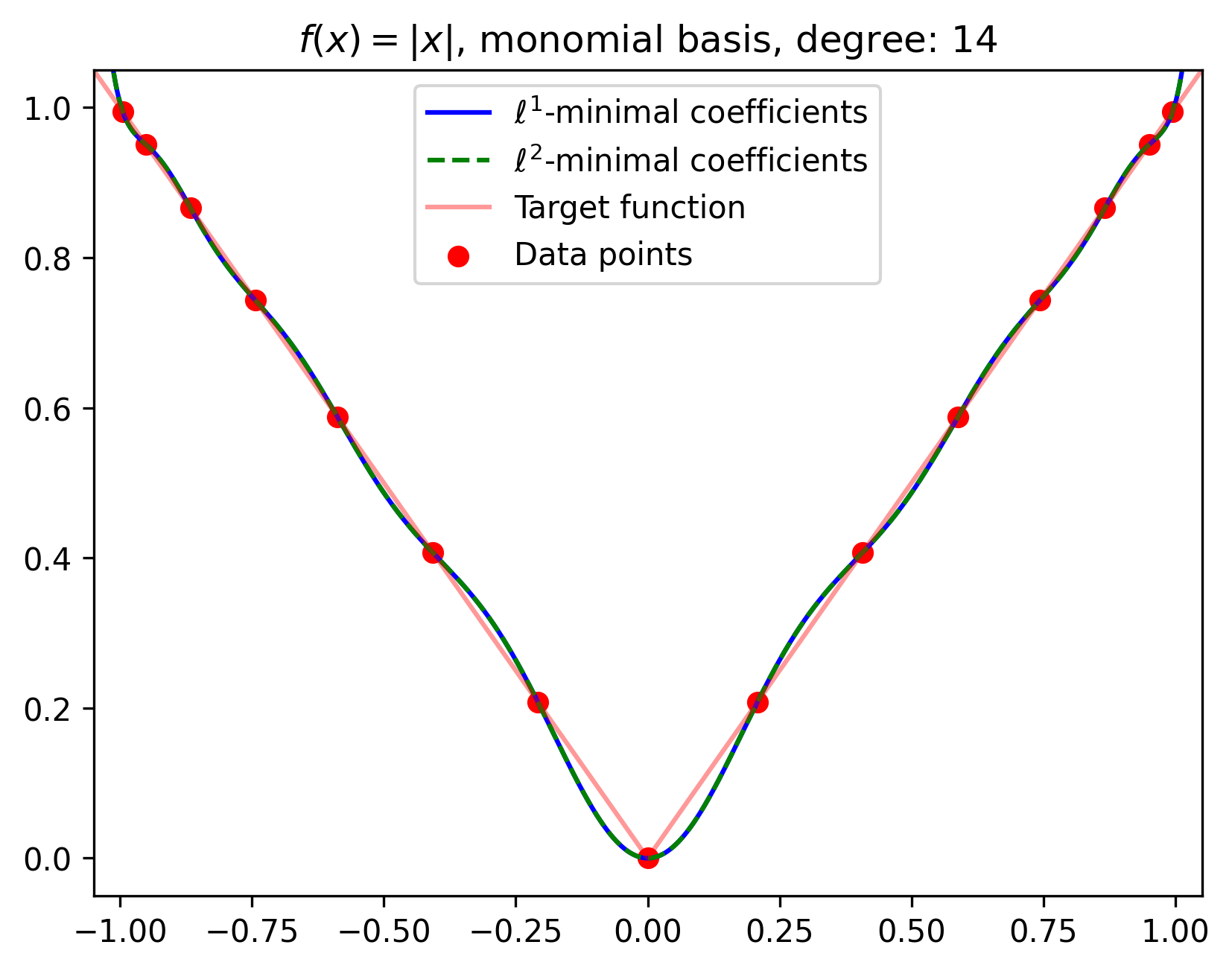}
    \includegraphics[width=0.24\linewidth]{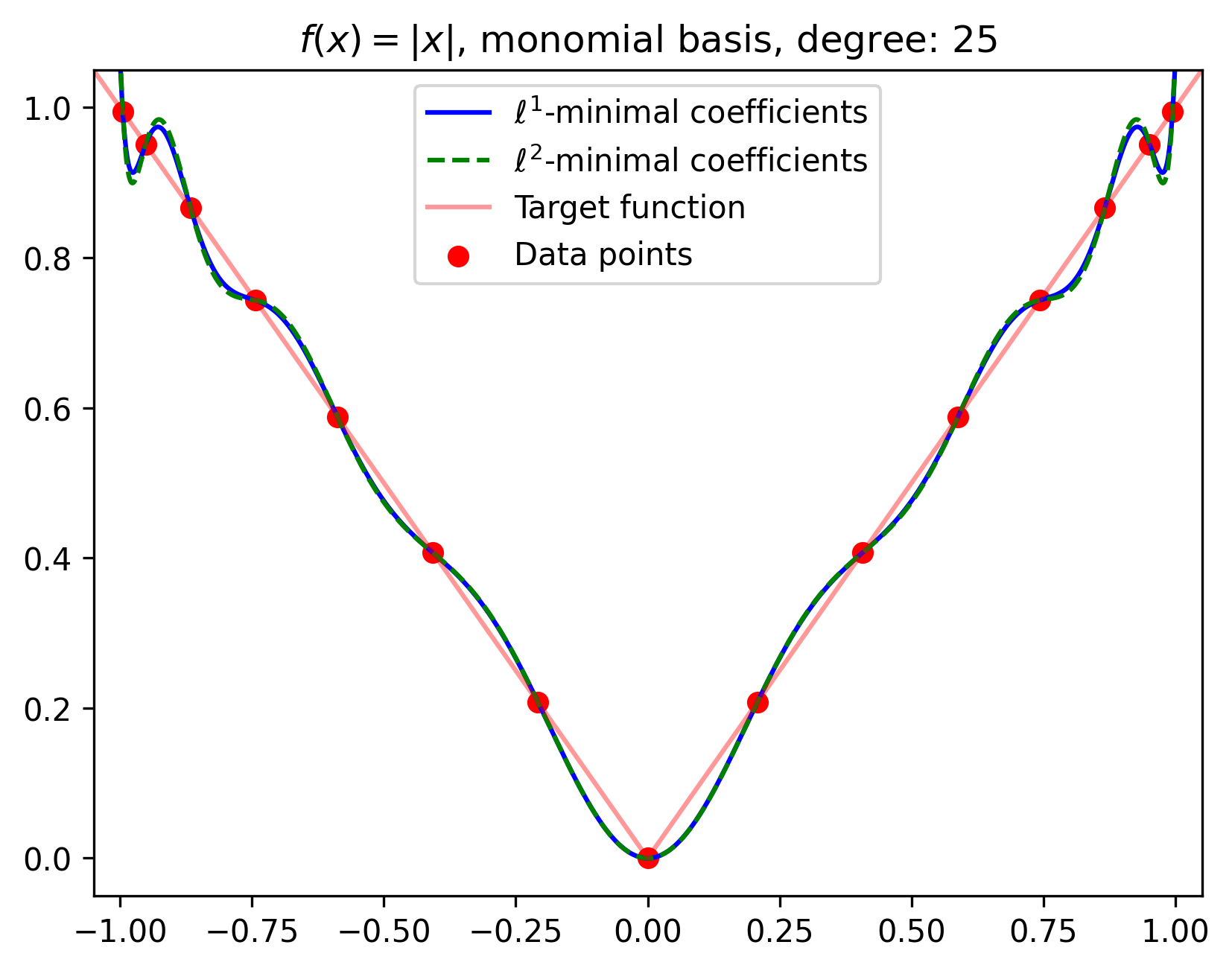}
    \includegraphics[width=0.24\linewidth]{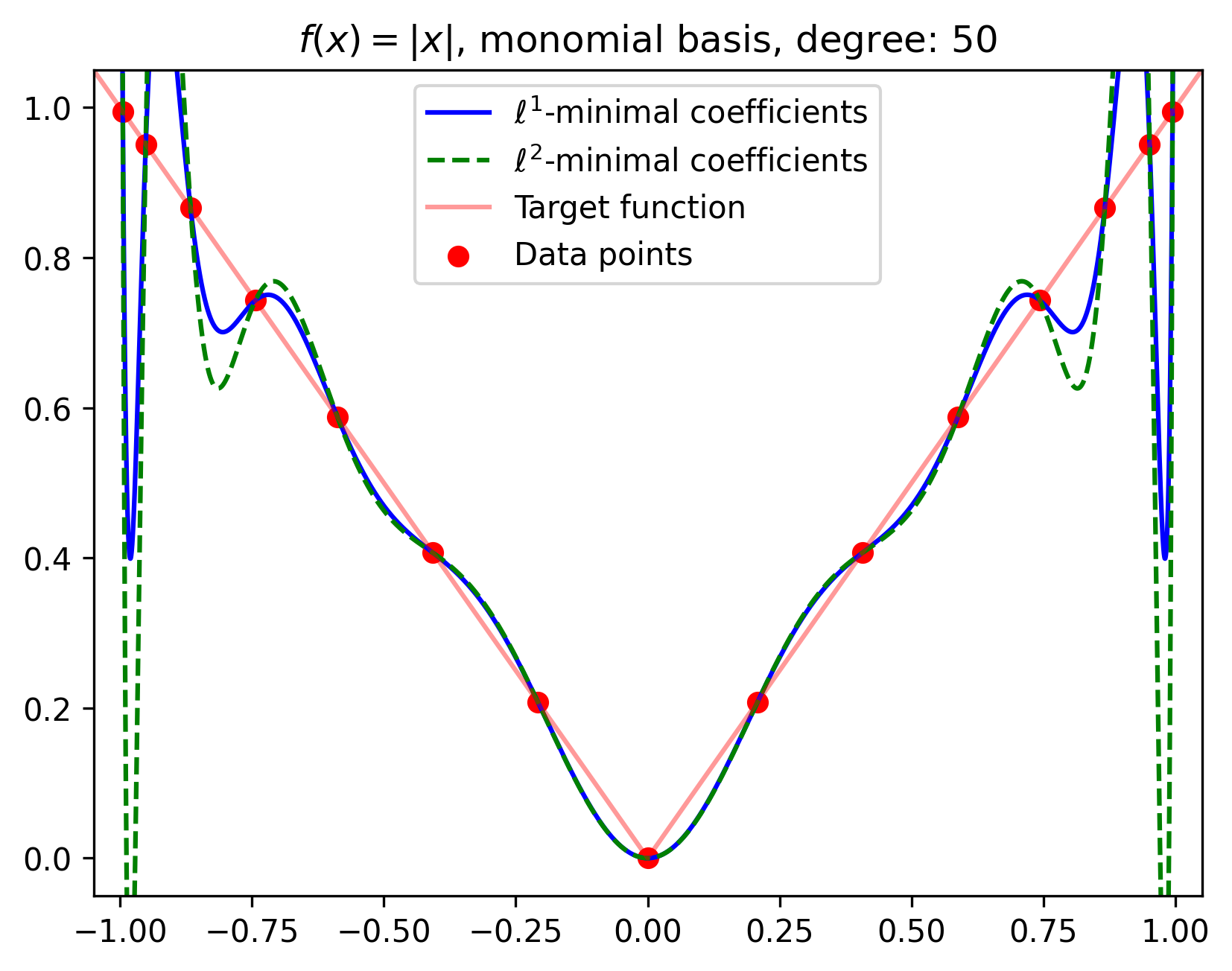}
    \includegraphics[width=0.24\linewidth]{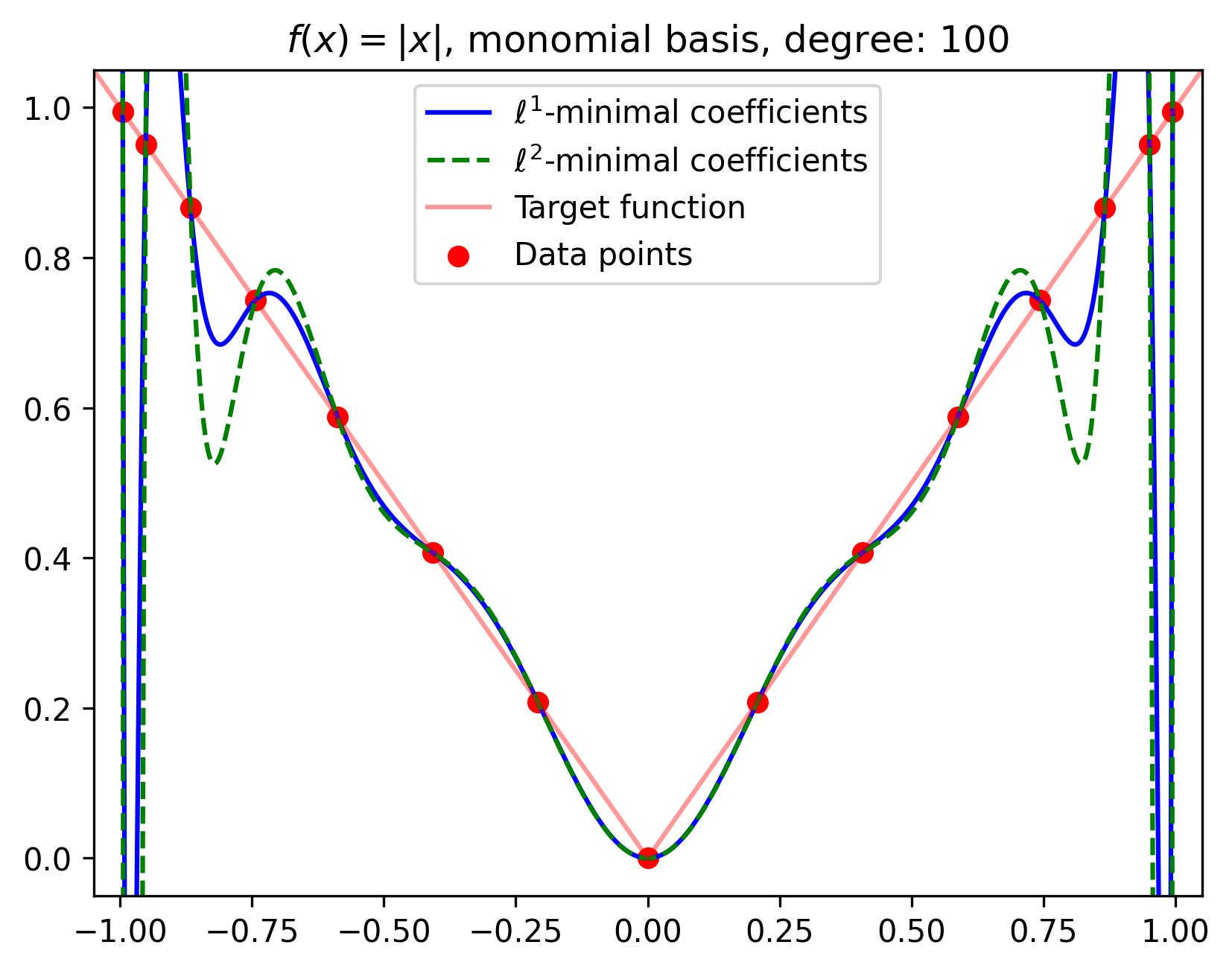}

    \includegraphics[width=0.24\linewidth]{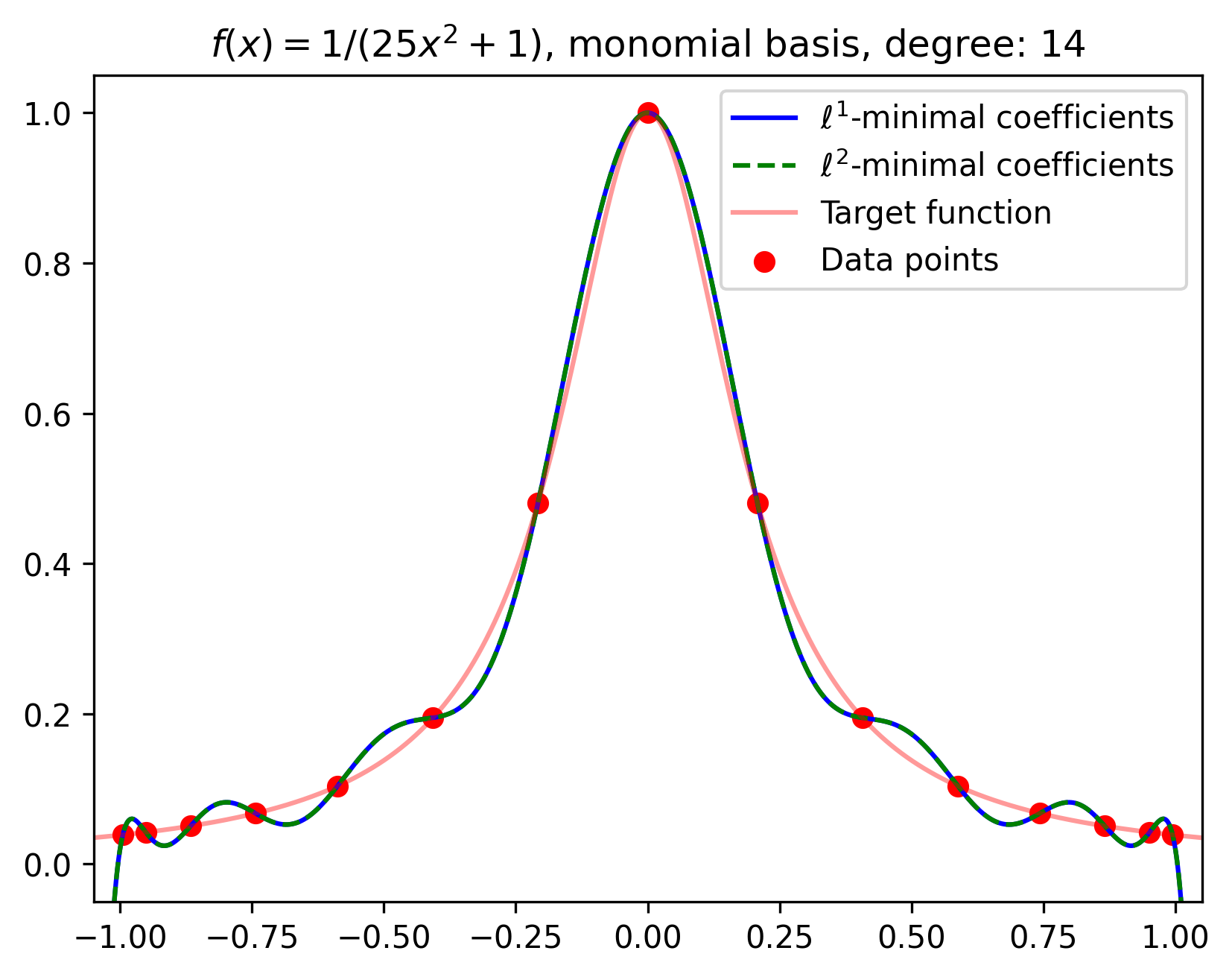}
    \includegraphics[width=0.24\linewidth]{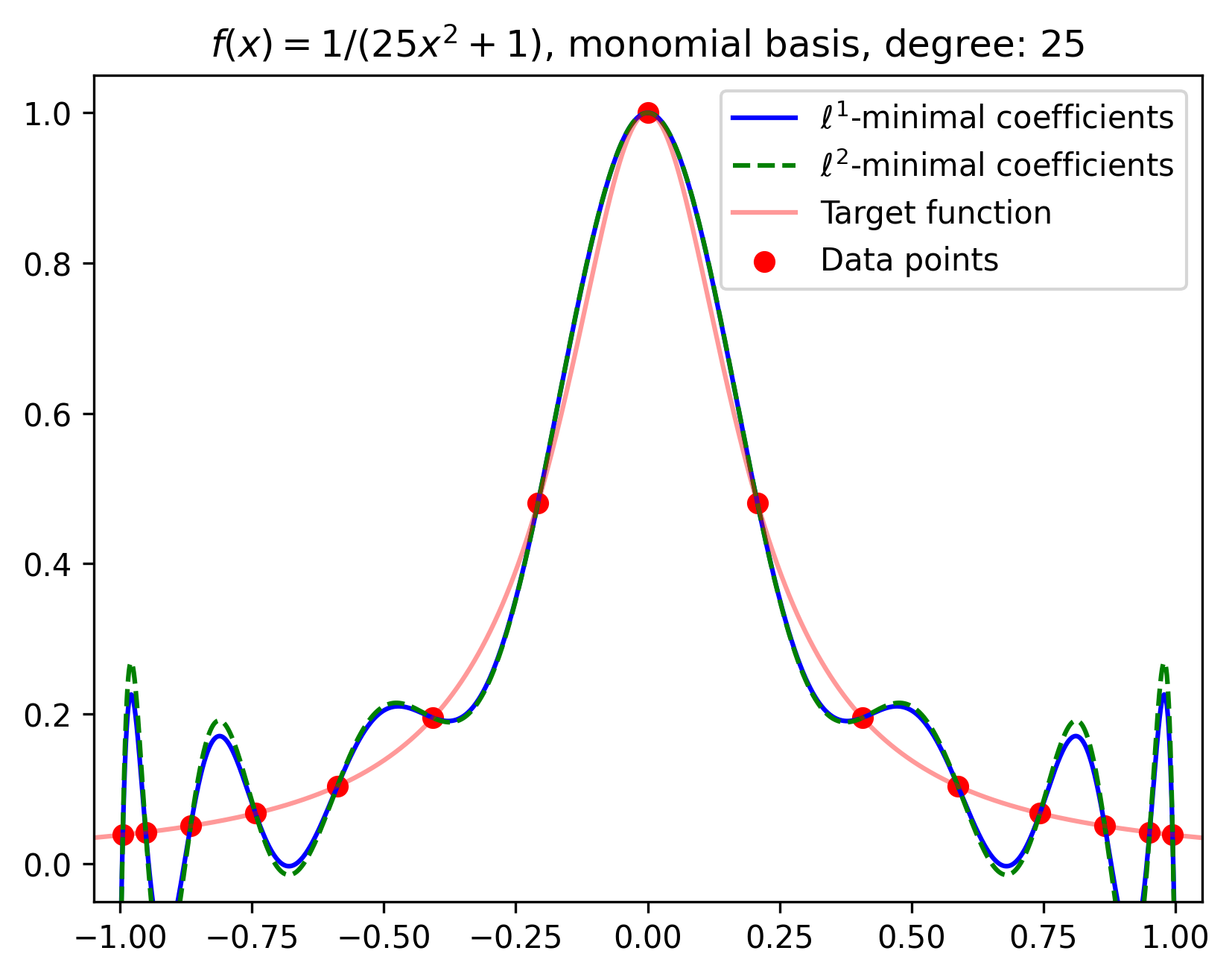}
    \includegraphics[width=0.24\linewidth]{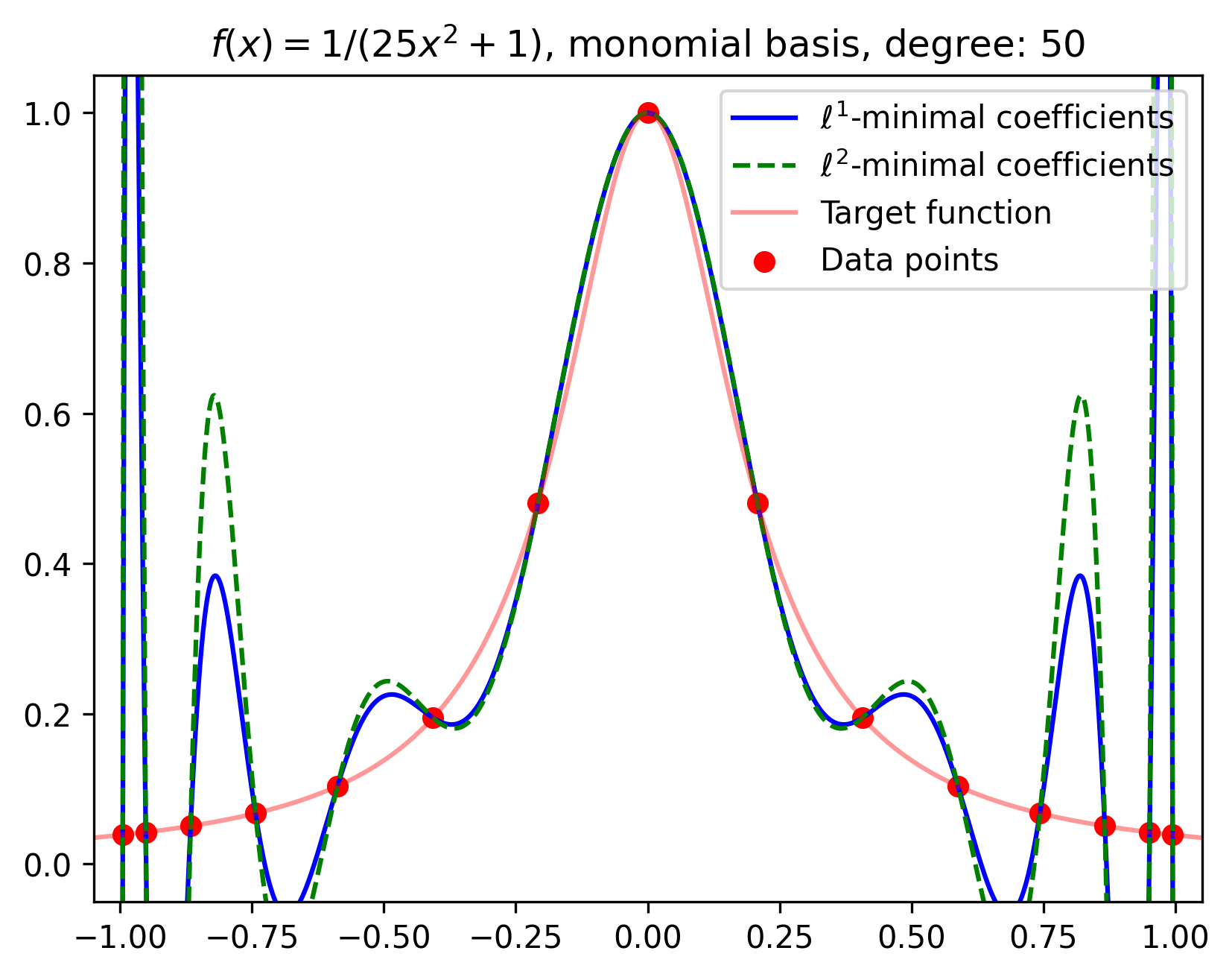}
    \includegraphics[width=0.24\linewidth]{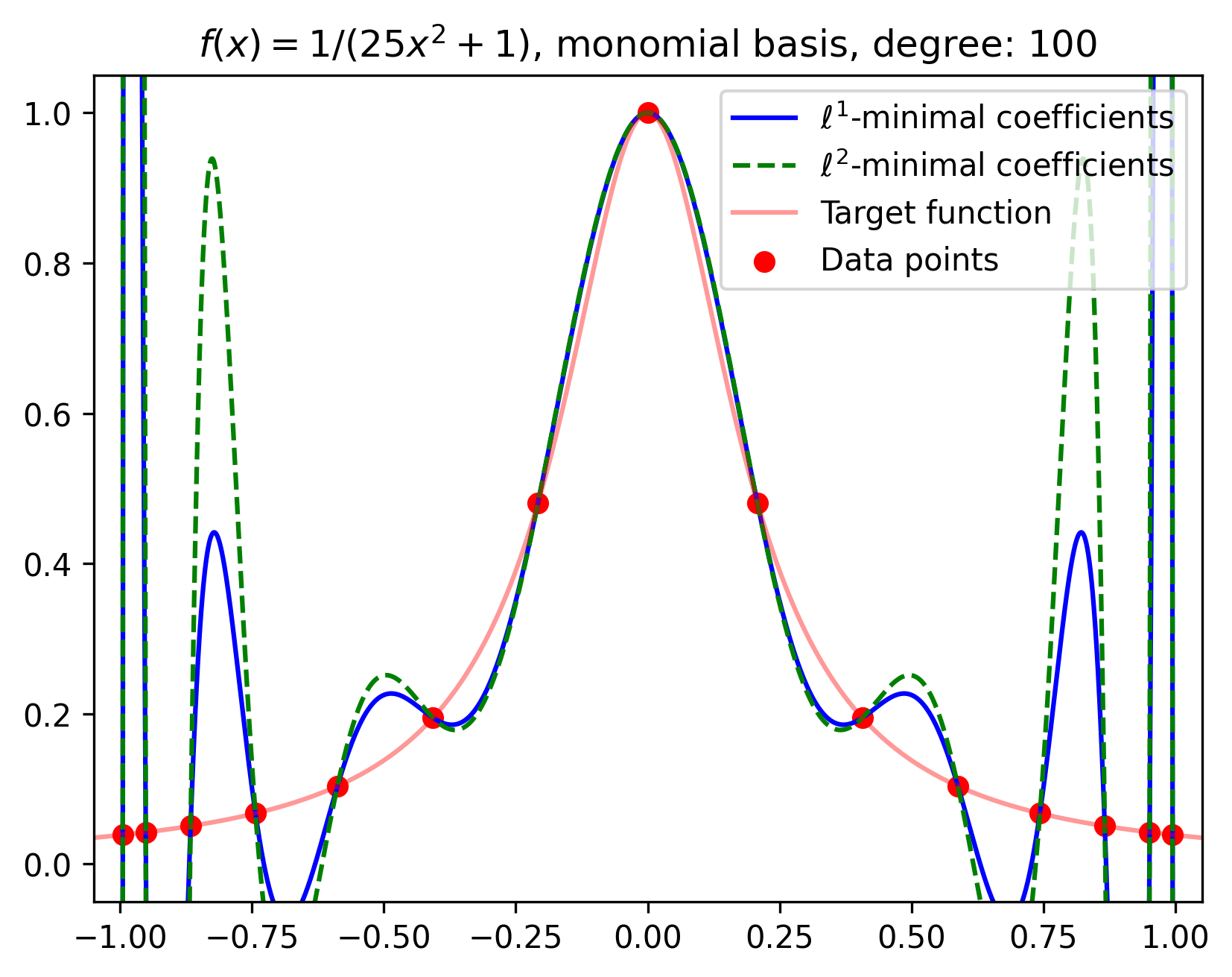}

    \includegraphics[width=0.24\linewidth]{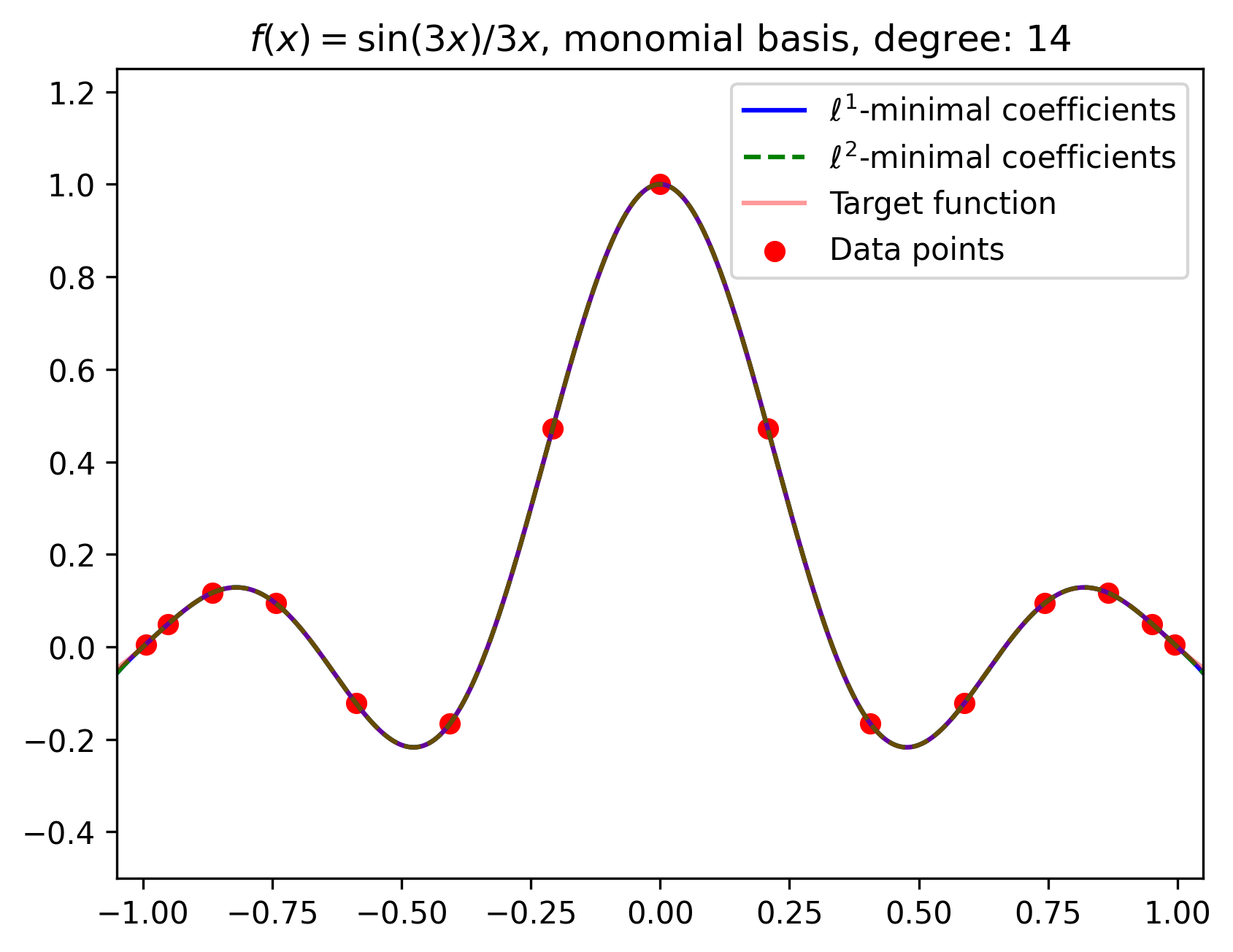}
    \includegraphics[width=0.24\linewidth]{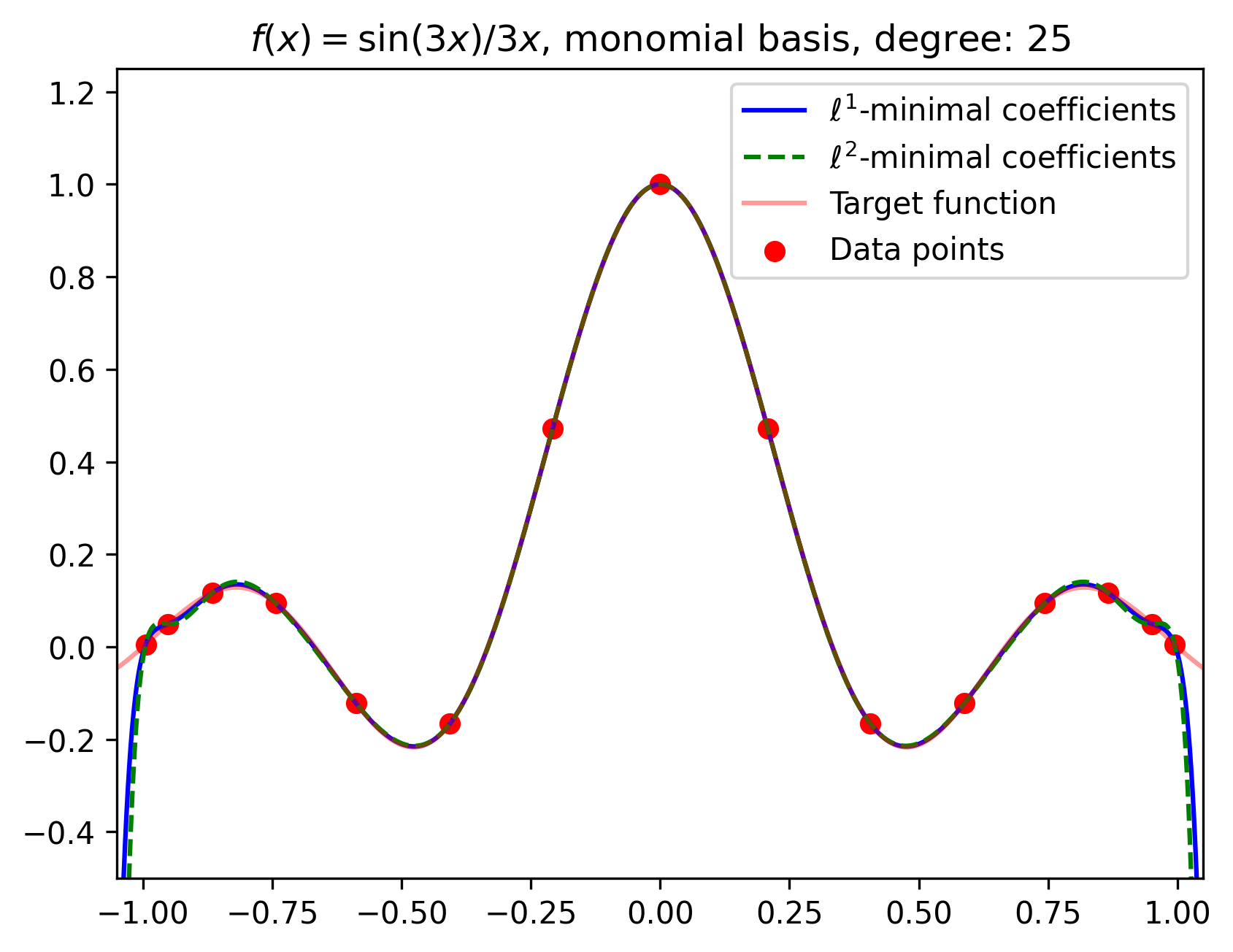}
    \includegraphics[width=0.24\linewidth]{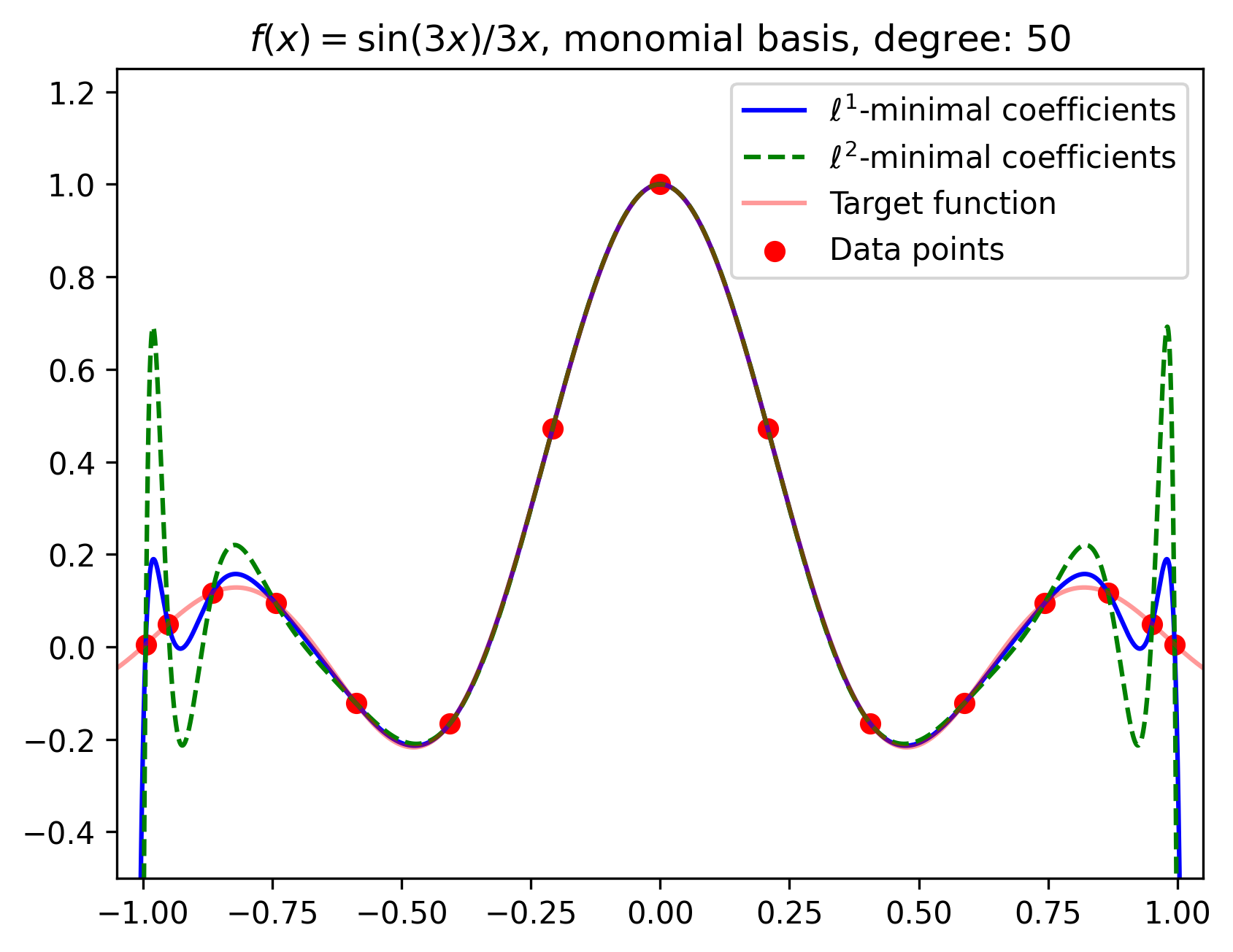}
    \includegraphics[width=0.24\linewidth]{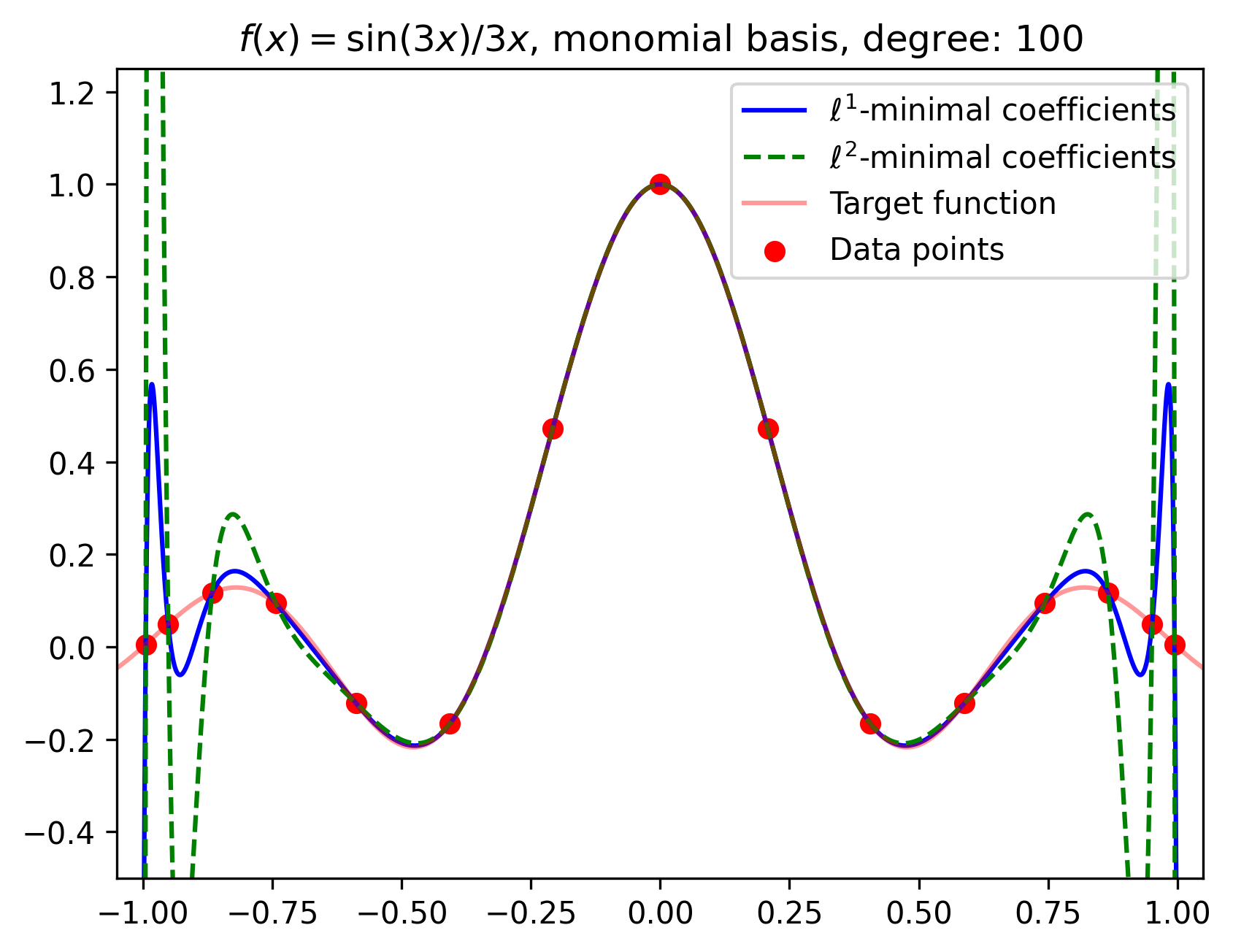}

    \caption{\label{figure overparametrized runge}
    We compare interpolating polynomials of degree $d$ with minimal monomial basis coefficients in the $\ell^1$- and $\ell^2$-sense for $n=15$ data pairs $(x_i, f(x_i))$ where $x_i$ are Chebyshev points in the interval $[-1,1]$. Left to right, the degree of interpolants increases from 14 (unique interpolating Lagrange polynomial) to 25, 50, and 100. Top to bottom, we consider the target functions $f_1(x) = |x|$, $f_2(x) =1/(25x^2+1)$ and $f_3(x) = \mathrm{sinc}(3x)$.\\ While the Lagrange solutions are guaranteed to converge as $n\to\infty$, we observe large boundary oscillations in the overparametrized solutions with minimal norm monomial basis coefficients of either type.
    }
\end{figure}

\section{Chebyshev Basis Regularization}\label{section chebyshev}

The Chebyshev basis $\{q_0, \dots, q_d\}$ is a collection of polynomials such that the first $d+1$ functions span the same space as the first $d+1$ monomials, i.e.\ the space $\mathcal P_d$ of polynomials of degree $d$. The Chebyshev polynomials (of the first kind) satisfy the orthogonality relation
\[
\langle q_d, q_r\rangle = \int_{-1}^1\frac{q_d(x)\,q_r(x)}{\sqrt{1-x^2}} \dx = \begin{cases} 0 &\text{if }d\neq r\\ \pi &\text{if }d=r=0\\ \pi/2 &\text{if }d=r\neq 0.\end{cases}
\]
with respect to the $L^2(\nu)$-inner product where $\d\nu = (1-x^2)^{-1/2}\dx$. Naturally, $\nu$ is a Radon measure and $\nu \geq \L^1$, so in particular $\|\cdot \|_{L^2(\nu)}\geq \|\cdot \|_{L^2(-1,1)}$ and convergence in $L^2(\nu)$ implies regular $L^2$-convergence.

\begin{theorem} \label{main chebyshev theorem}
Let $(x_i, y_i)_{i=0}^n$ be a collection of data pairs with $x_i \in [-1,1]$ for all $i$. Then for every $d\geq n$, there exists a unique coefficient vector $a = a^d\in \R^{d+1}$ such that
\[
a = \argmin \left\{ \|a\|_{\ell^2}^2 : \sum_{k=0}^d a_k \,q_k(x_i) = y_i \:\:\forall\ i=0, \dots, n\right\}.
\]
Furthermore, denoting $P_d(x) = \sum_{k=0}a^d_k\,q_k(x)$ we have
\[
\lim_{d\to \infty}\|a^d\|_{\ell^2}^2 = 0, \qquad \lim_{d\to\infty} \|P_d\|_{L^2(-1,1)} = \lim_{d\to\infty} \|P_d\|_{L^2(\nu)} = 0.
\]
\end{theorem}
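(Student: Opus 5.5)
Existence and uniqueness come first, and they are standard. We assume the points $x_i$ are distinct, as the paper does implicitly; otherwise the conditions are either redundant or inconsistent. For $d\geq n$ the evaluation map $A_d:\R^{d+1}\to\R^{n+1}$, $a\mapsto \big(\sum_k a_k q_k(x_i)\big)_i$, is surjective. The reason is that $\{q_0,\dots,q_d\}$ spans $\mathcal P_d\supseteq\mathcal P_n$, and the Lagrange polynomial of degree $n$ interpolates any data. The feasible set is therefore a nonempty affine subspace, and the strictly convex functional $\|a\|_2^2$ has a unique minimizer on it, namely the orthogonal projection of $0$ onto that subspace (the Moore--Penrose solution $a^d = A_d^\top(A_dA_d^\top)^{-1}y$).

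For the decay I will not analyze the minimizer directly. Instead I will construct explicit competitors $\tilde a^d\in\F_d$ with $\|\tilde a^d\|_2\to0$; then $\|a^d\|_2\leq\|\tilde a^d\|_2\to0$. The construction uses the trigonometric structure $q_k(\cos\theta)=\cos(k\theta)$. Write $x_i=\cos\theta_i$ with $\theta_i\in[0,\pi]$. For a parameter $m\to\infty$ I take the block of indices $k\in[m:2m]$ or, more simply, $k\in[d/2:d]$, and look for coefficients supported there and spread out, of size $O(1/\sqrt{d})$ each. A clean route is a Fejér/Dirichlet-type kernel. Let $\ell_i$ be the Lagrange basis polynomials of degree $n$ at the nodes. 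Approximate each $y_i$-weighted $\ell_i$ by a polynomial of the form $\frac1N\sum_{k=1}^N c\,q_{\ldots}$. Alternatively, and more robustly, use $N$ different "copies" of the interpolant. If $\Pi_j\in\mathcal P_{d}$ interpolates the data for $j=1,\dots,N$ and the coefficient vectors of the $\Pi_j$ have disjoint (or nearly orthogonal) supports with $\ell^2$-norms bounded by $C$, then the average $\frac1N\sum_j\Pi_j$ interpolates the data and has coefficient norm at most $C/\sqrt N$.

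The key step is producing such copies. I will take $\Pi_j = L + R_j$, where $L$ is the Lagrange interpolant, and try to choose $\Pi_j$ with coefficients supported in $[jM:(j+1)M)$ for some fixed $M$ depending only on the nodes. Concretely, I seek a polynomial of the form $\sum_{k=jM}^{(j+1)M-1}c_k\cos(k\theta)$ that matches $y_i$ at the $\theta_i$. By the product formula, $\cos(k\theta)$ with $k=jM+r$ equals $\cos(jM\theta)\cos(r\theta)-\sin(jM\theta)\sin(r\theta)$. Solvability then reduces to an $(n+1)$-dimensional linear system whose matrix depends on $j$ through the phases $jM\theta_i$. I need this system to be uniformly invertible along infinitely many $j$; by a Dirichlet/Kronecker simultaneous approximation argument, one can choose infinitely many $j$ with $jM\theta_i$ close to $0$ mod $2\pi$ for all $i$ simultaneously. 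For those $j$ the system is a small perturbation of the $j=0$ one, which is invertible (a Vandermonde-type Chebyshev system of size $M\geq n+1$). This yields infinitely many disjointly supported interpolants with uniformly bounded coefficient norms, and averaging $N$ of them inside degree $d$ gives $\|a^d\|_2\lesssim N(d)^{-1/2}\to0$, using monotonicity of $\|a^d\|_2$ in $d$. This simultaneous-recurrence step is where I expect the main obstacle: it requires care when some $\theta_i\in\{0,\pi\}$, i.e. $x_i=\pm1$. There $\cos(k\theta_i)=(\pm1)^k$ does not oscillate, which is consistent with the needed behavior but must be handled in the uniform invertibility.

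Finally, the function norms follow from orthogonality: $\|P_d\|_{L^2(\nu)}^2=\pi(a_0^d)^2+\frac\pi2\sum_{k\geq1}(a_k^d)^2\leq\pi\|a^d\|_2^2\to0$. Since $\nu\geq\L^1$ on $(-1,1)$, we also get $\|P_d\|_{L^2(-1,1)}\leq\|P_d\|_{L^2(\nu)}\to0$.
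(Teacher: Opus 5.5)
Your proof is correct, but it reaches the decay by a genuinely different route from the paper's.

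\textbf{The paper's route.} It never constructs coefficients directly. It uses $\|P\|_{L^2(\nu)}^2=\frac\pi2\|a\|_{\ell^2}^2+\frac\pi2 a_0^2$ to reduce the claim to showing that point values can be prescribed at vanishing $L^2(\nu)$-cost. It does this in three steps:
\begin{enumerate}
\item narrow hat functions $h_\eps$, whose $L^2(\nu)$-norm is $O(\eps^{1/4})$ because $\nu$ gives mass $O(\sqrt\eps)$ to intervals of length $2\eps$;
\item a uniform polynomial approximation $Q$ from Stone--Weierstrass;
\item a Lagrange correction of the residuals $y_i-Q(x_i)$.
\end{enumerate}

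\textbf{Your route.} You stay in coefficient space and exploit $T_k(\cos\theta)=\cos(k\theta)$. Take $M=n+1$ and a block $[jM:(j+1)M)$ whose phases $jM\theta_i$ are all close to $0$ modulo $2\pi$. The interpolation matrix on that block is then a small perturbation of the invertible matrix $(T_r(x_i))_{i,r=0}^{n}$. So the block carries an interpolant with uniformly bounded coefficients. Averaging $N$ such disjointly supported interpolants divides the norm by $\sqrt N$.

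The step you flag as the main obstacle is routine. Pigeonholing the orbit $\bigl(jM\theta_i/(2\pi)\bigr)_i$ in the torus $\mathbb T^{n+1}$ gives infinitely many good $j$. In fact the good $j$ form a Bohr set, so they have bounded gaps. The endpoints cause no difficulty either: for $\theta_i=0$ the row is unchanged, and for $\theta_i=\pi$ it is at worst multiplied by $-1$. The exploratory remarks about Fej\'er kernels and $\Pi_j=L+R_j$ can simply be dropped.

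\textbf{What each route buys.} The paper's argument only uses orthogonality of the basis with respect to an integrable weight, plus a lower bound on $\|q_k\|$. It therefore transfers verbatim to, e.g., renormalized Legendre bases. However, the Stone--Weierstrass step gives no control on the degree, and hence no rate.

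Your argument is tied to the trigonometric structure of the Chebyshev basis, but it is explicit and quantitative. Bounded gaps give $N(d)\gtrsim d$, hence $\|a^d\|_{\ell^2}=O(d^{-1/2})$. This rate is sharp, since Cauchy--Schwarz gives $|y_i|\leq\|a^d\|_{\ell^2}\sqrt{d+1}$. For nodes with $\theta_i\in\pi\mathbb Q$, such as Chebyshev points, a suitable choice of $M$ makes every block work exactly.
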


\begin{figure}
    \centering
    \includegraphics[width=0.24\linewidth]{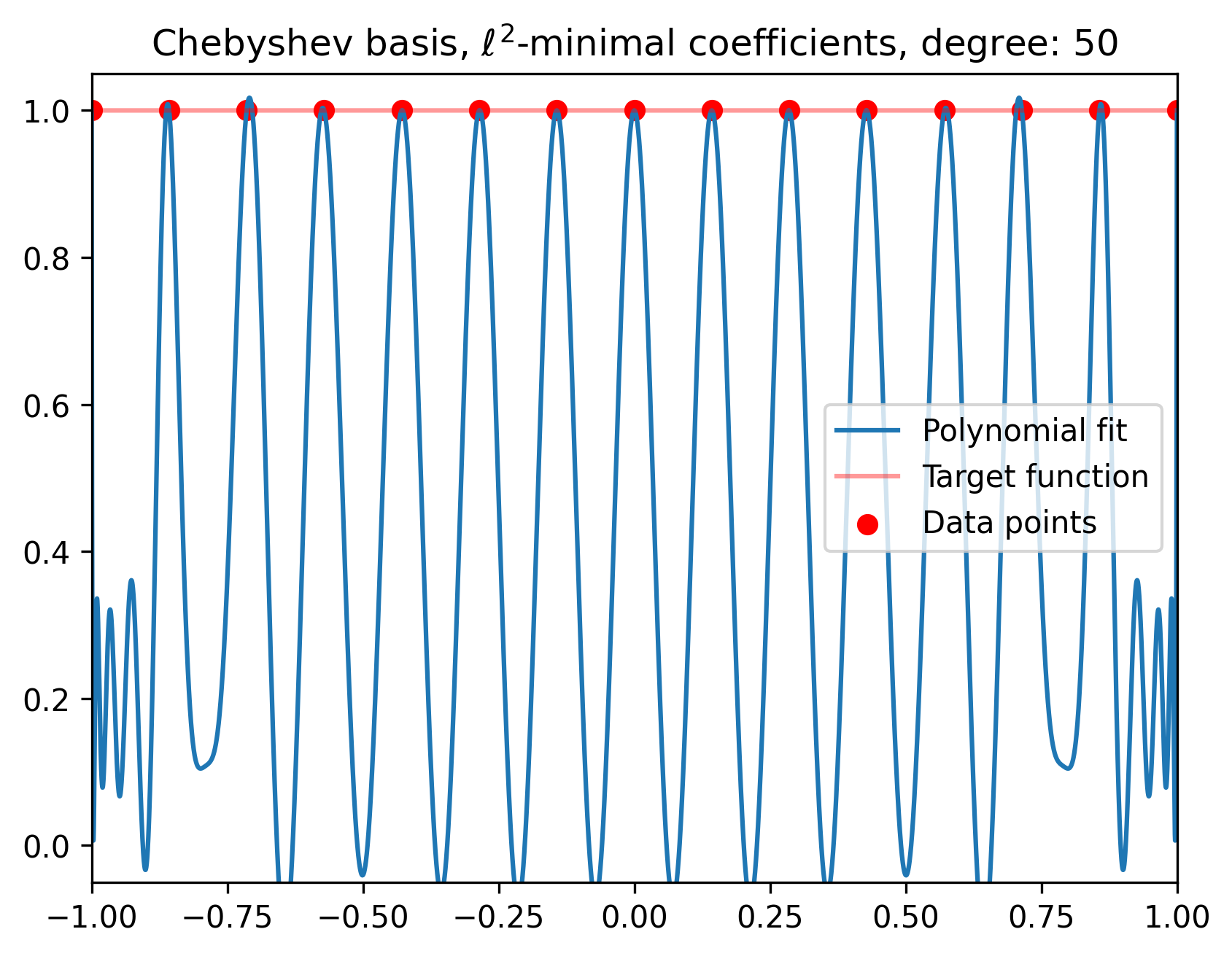}
    \includegraphics[width=0.24\linewidth]{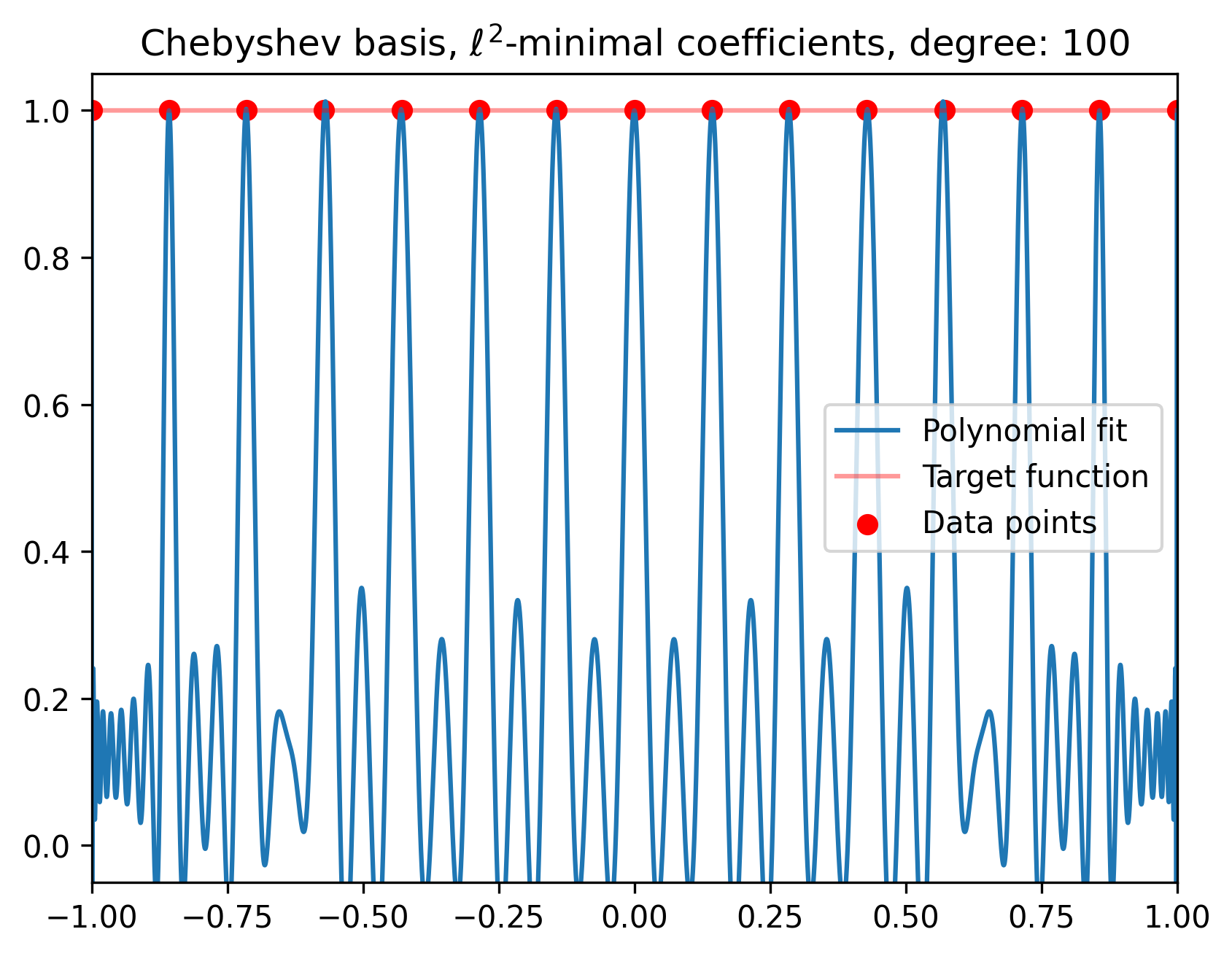}
    \includegraphics[width=0.24\linewidth]{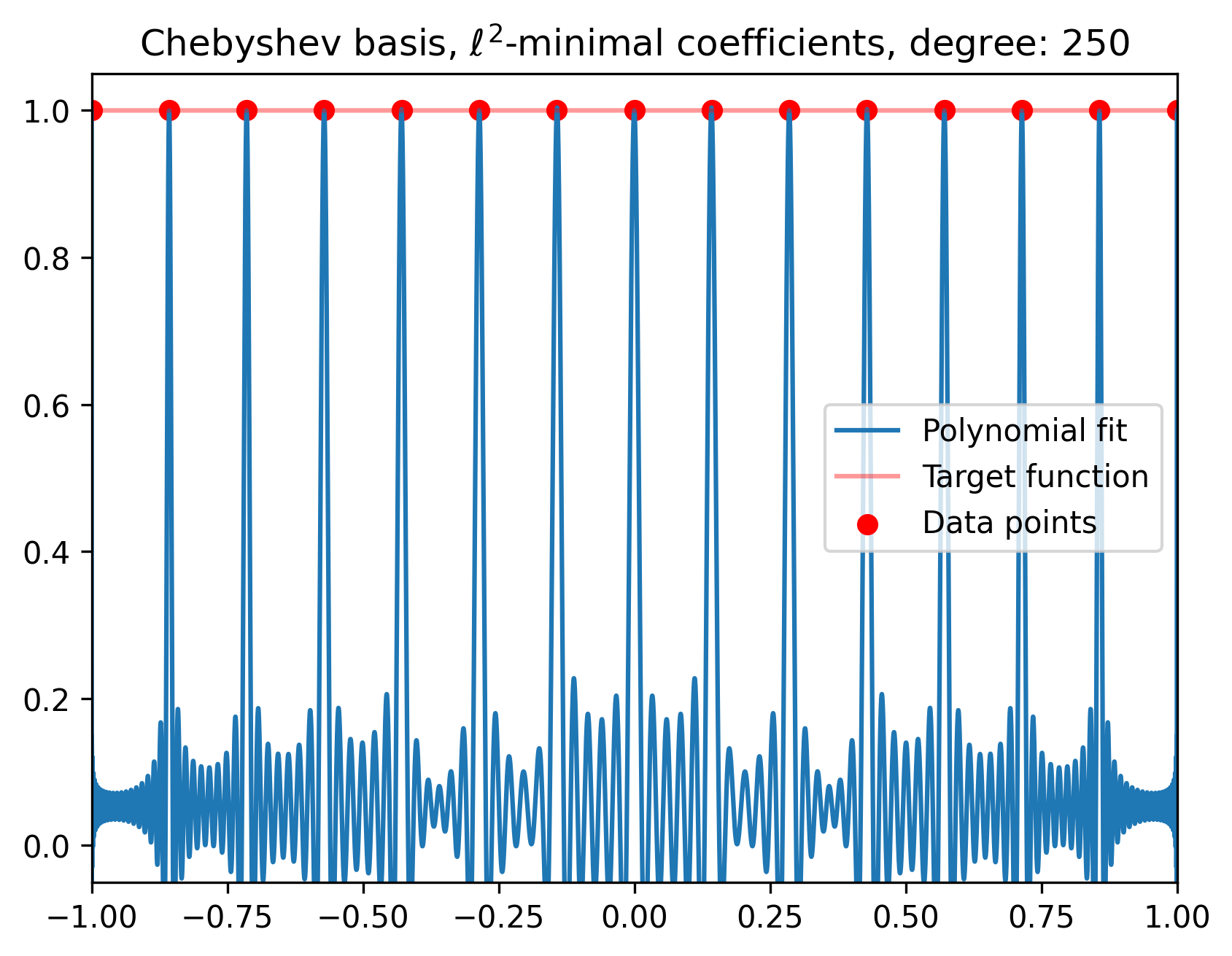}
    \includegraphics[width=0.24\linewidth]{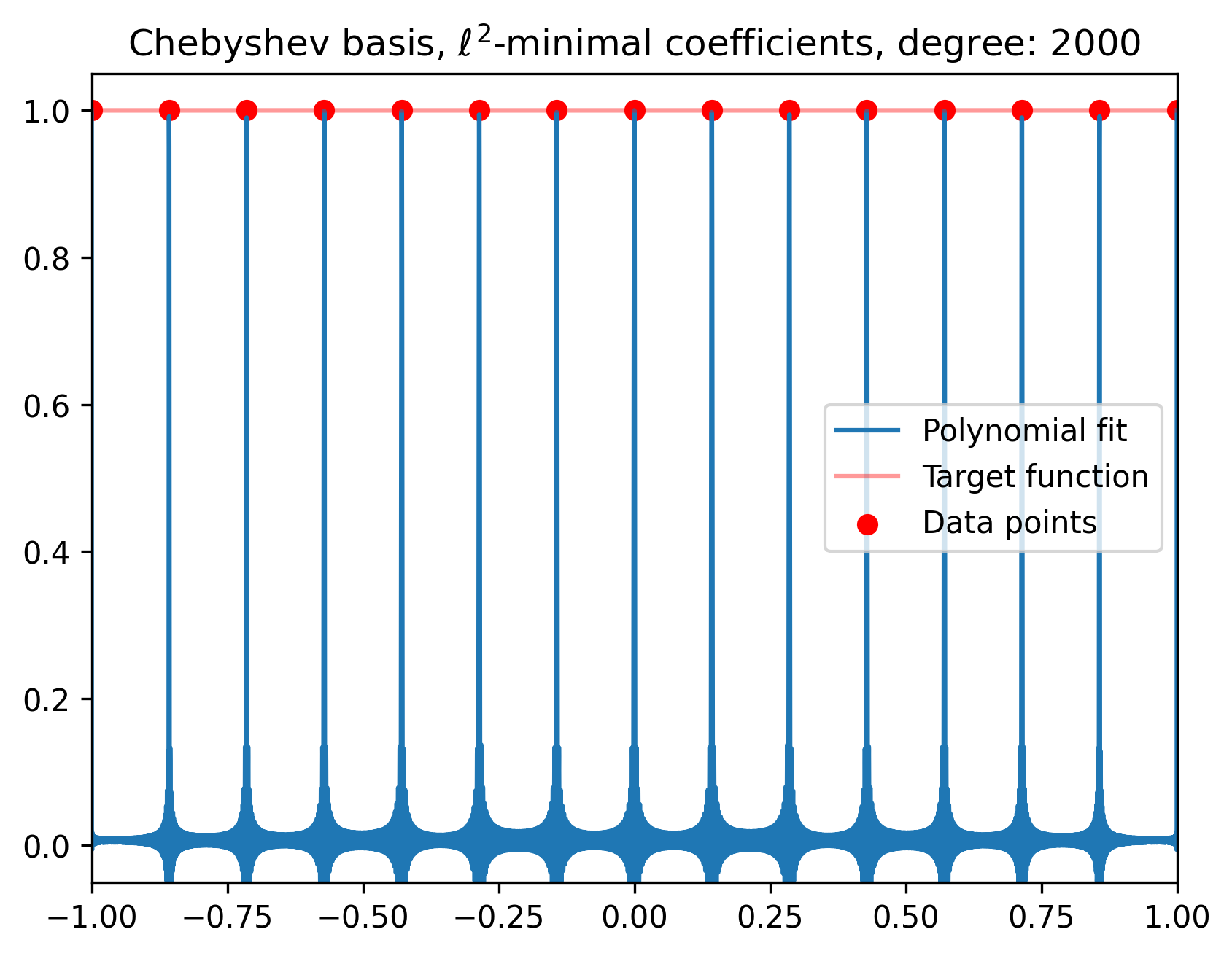}

\vspace{3mm}

    \includegraphics[width=0.24\linewidth]{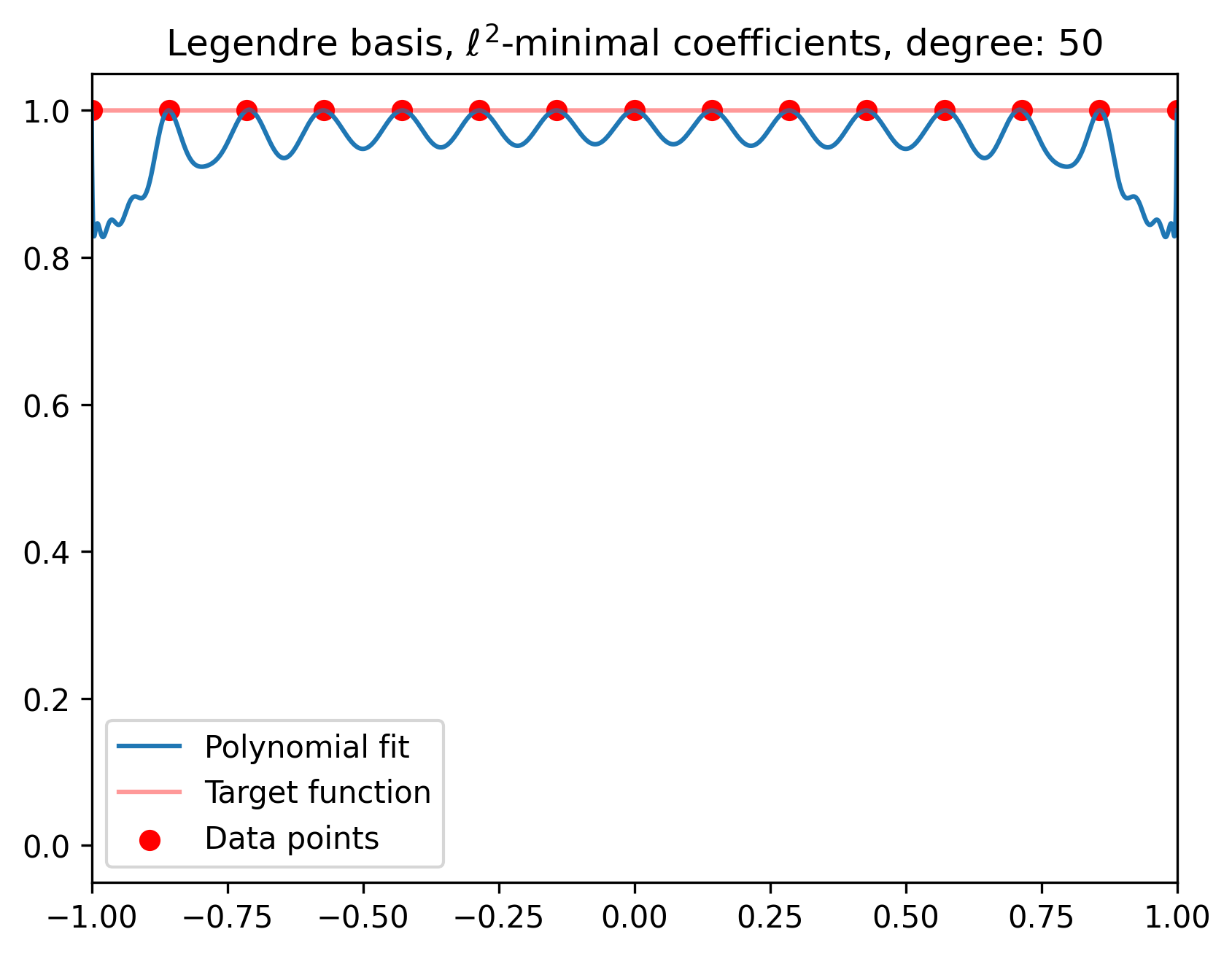}
    \includegraphics[width=0.24\linewidth]{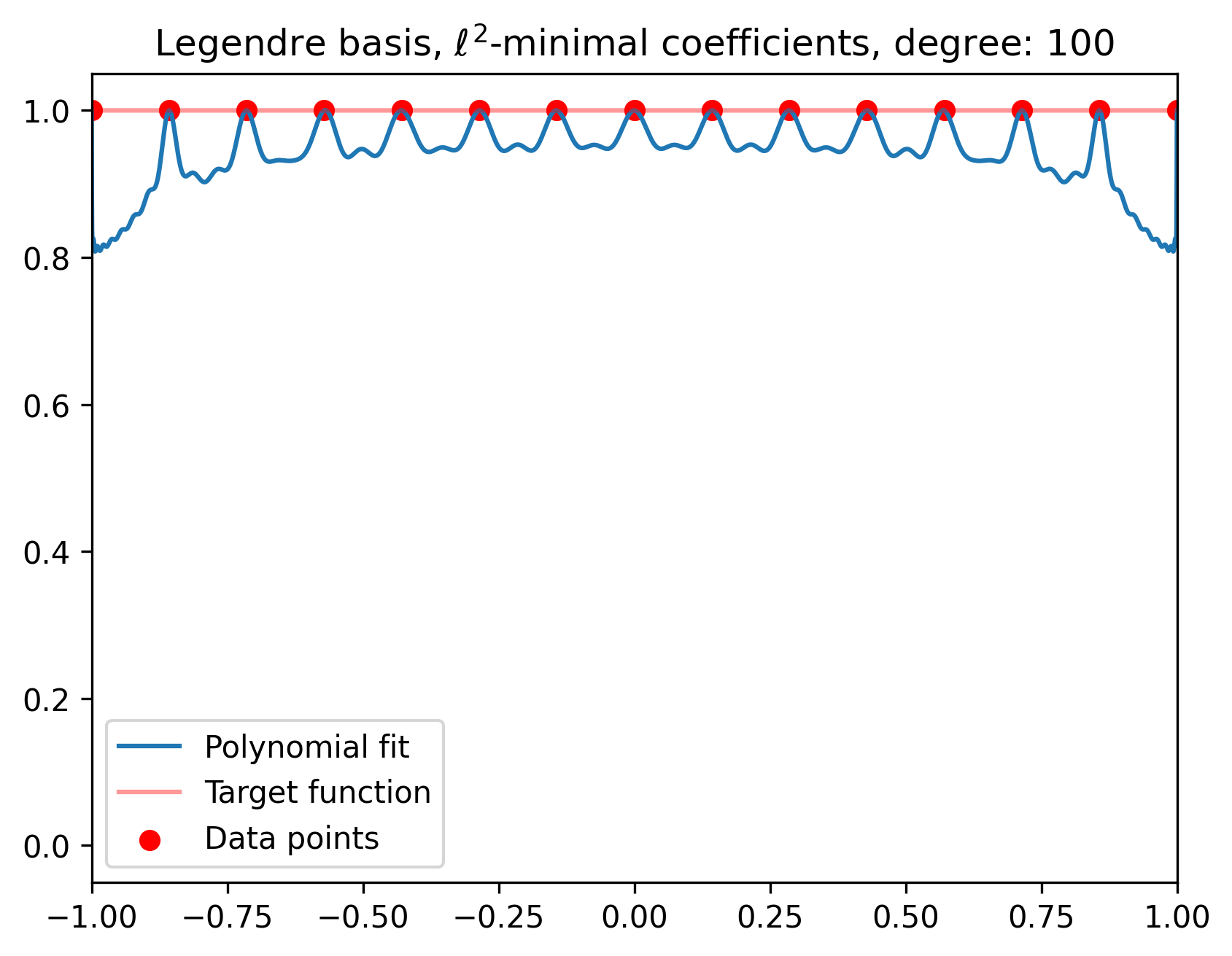}
    \includegraphics[width=0.24\linewidth]{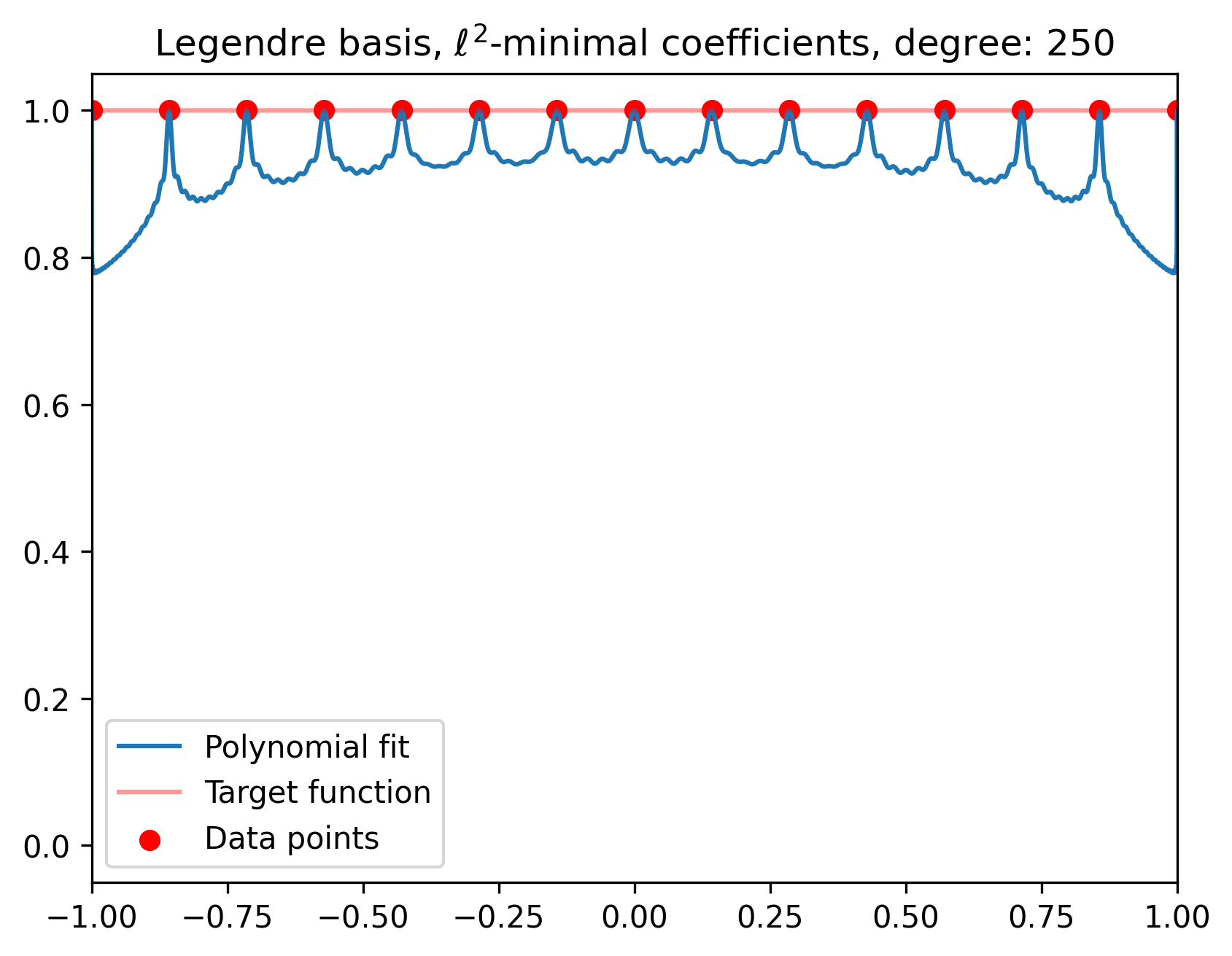}
    \includegraphics[width=0.24\linewidth]{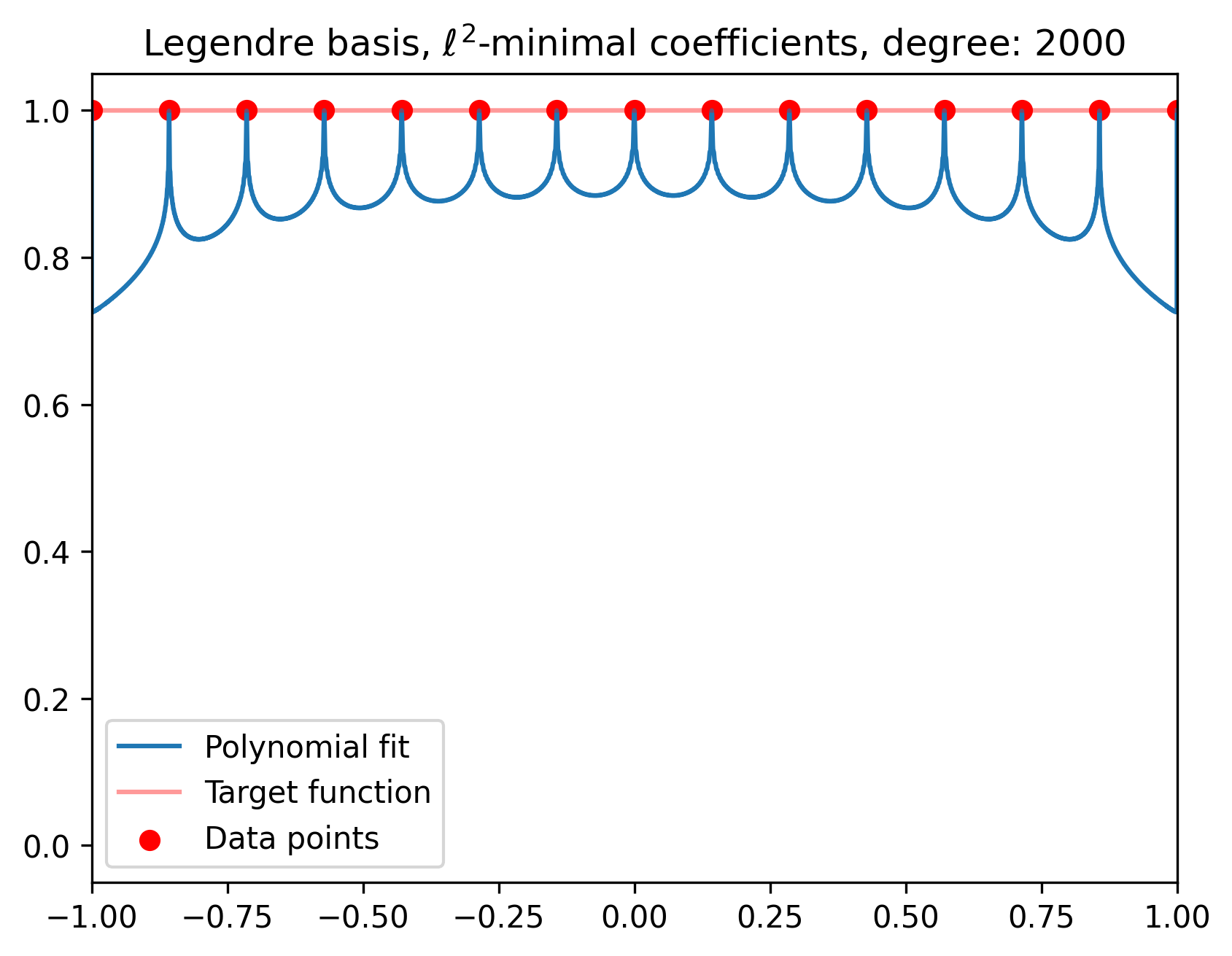}

\caption{\label{figure orthogonal bases}
Polynomial interpolants of the function $f(x) = 1$ with $\ell^2$-minimal coefficients with respect to the Chebyshev basis (top line) and Legendre basis (bottom line), based on 15 equidistant data points. By Sections \ref{section chebyshev} and \ref{section legendre}, both interpolants are expected to tend to zero in $L^2(-1,1)$ as the degree increases (left to right: $d=50, 100, 250, 2000$), but the Chebyshev basis interpolants converge  provably and significantly faster.
}
\end{figure}

The proof does not use specific properties of the Chebyshev polynomials beyond the fact that the first $d+1$ functions span $\mathcal P_d$ and that there exists a weight function $w$ and constant $c$ such that
\[
\inf_{x\in(-1,1)}w(x)>0, \qquad \int_{-1}^1 q_k(x)q_l(x)\,w(x) \dx = 0 \quad\forall\ k\neq l\qquad\text{and}\quad 
\int_{-1}^1 |q_k(x)|^2w(x)\dx \geq c.
\]
It thus holds also e.g.\ for a renormalized Legendre basis $\{c_0p_0, \dots, c_dp_d\}$ where $p_k$ denotes the $k$-th Legendre polynomial and $c_k \sim \sqrt{k}$. For a heuristic consideration of the Legendre basis in standard normalization, see Section \ref{section others} and compare Figure \ref{figure orthogonal bases}.

\begin{proof}
Since $\|a\|^2_{\ell^2}$ is strongly convex on $\R^{d+1}$, there exists a unique coefficient vector $a = a^d$ which minimizes $\|a\|_2$ in the affine subspace specified by the interpolation conditions. For $P = \sum_{k=0}^d a_kp_k$, we compute
\begin{align*}
\|P\|_{L^2(\nu)}^2
    &= \sum_{k, l=0}^d a_ka_l \,\langle p_k, p_l\rangle_{L^2(\nu)} = \pi \,a_0^2 + \frac\pi 2 \sum_{k=1}^d a_k^2  =\frac{\pi}2\|a^d\|^2_{\ell^2} +\frac{\pi}{2}a_0^2.
\end{align*}
We hence aim to show that it is possible to interpolate the data exactly by polynomials with small $L^2(\nu)$-norm if the degree $d$ is high enough. As an auxiliary construct, we define
\[
\eta(x)= \max \{1-|x|, 0\}, \qquad
h_\varepsilon(x)=\sum_{i=0}^n y_i \eta \left ( \frac{x-x_i}{\varepsilon}\right ).
\]
If $\eps < \min_{i\neq j}|x_i-x_j|/2$, the support of the summands in $h_\eps$ does not overlap and
\begin{align*}
\|h_\varepsilon\|_{L^2(\nu)}^2 &= \int_{-1}^1 \frac{|h_\varepsilon(x)|^2}{\sqrt{1-x^2}}dx 
    = \sum_{i=0}^n |y_i|^2 \int_{-1}^1 \frac{\chi_{(x_i-\eps, x_i+\eps)}}{\sqrt{1-x^2}}\dx \leq \sum_{i=0}^n |y_i|^2 \int_{-1}^{-1+2\eps}\frac{1}{\sqrt{(1-x)(1+x)}}\dx\\
    &\leq \left(\sum_{i=0}^n |y_i|^2 \right) \int_0^{2\eps}\frac1{\sqrt{t}}\dt = \left(\sum_{i=0}^n |y_i|^2 \right) 2\,\sqrt{2\eps} \leq 4\left(\sum_{i=0}^n |y_i|^2 \right)\sqrt\eps.
\end{align*}
By the Stone-Weierstrass Theorem, there exists polynomial $Q$ such that $\|h_\varepsilon - Q\|_{L^\infty(-1,1)} <\eps$
and in particular
\begin{enumerate}
\item $|h_\varepsilon(x_i)-Q(x_i)| < \eps$ for $i=0, \ldots, n.$
\item $\|h_\eps - Q\|_{L^2(\nu)} \leq \|h_\eps -Q\|_{L^\infty(-1,1)}\|\nu\|_{TV} \leq \pi\eps$.
\end{enumerate}
However, $Q$ may not interpolate the data pairs $(x_i, y_i)$ exactly. Define the Lagrange polynomials
\[
L_i(x)=\prod_{j\neq i}\frac{x-x_j}{x_i-x_j},\qquad L_i(x_k)=\delta_{ik}.
\]
Then the modified polynomial $P$ of degree $\min\{\deg Q, n\}$ given by
\[
P(x) := Q(x) + \sum_{i=0}^n \big(y_i - Q(x_i)\big)\,L_i(x)
\]
fits the data exactly and
\begin{align*}
\|Q\|_{L^2(\nu)} &\leq \|Q-P\|_{L^2(\nu)} + \|P-h_\eps\|_{L^2(\nu)} + \|h_\eps\|_{L^2(\nu)}\\
    &\leq \sum_{i=0}^n \big|Q(x_i) - y_i\big|\,\|L_i\|_{L^2(\nu)} + \pi \eps + \sqrt{4\sum_{i=0}^n |y_i|^2}\,\eps^{1/4}\\
    &\leq n\eps \,\max_{i=0,\dots, n} \|L_i\|_{L^2(\nu)} + \pi \eps + \sqrt{4\sum_{i=0}^n |y_i|^2}\,\eps^{1/4}.
\end{align*}
The first term is finite since all Lagrange polynomials are bounded on the compact set $[-1,1]$ and $C^0([-1,1])\embeds L^2(\nu)$. The bound depends on $n$ and the dataset, but not $d$ or $\eps$. Thus, for any given $\delta>0$ we may choose $\eps$ small enough to ensure that $\|Q\|_{L^2(\nu)} < \delta$ if $d$ is large enough.
\end{proof}

Thus, for large $d$, $P_d$ approximates $f$ about as well as the constant zero function. If $\int_{-1}^1 f(x)\dx \neq 0$, other constant functions are better approximators of $f$ and can be found in the {\em underparametrized} setting. For comparison, if $d\leq n$ we consider the unique least squares polynomial $P_d \in \mathcal P_d$ which minimizes the mean squared error
\[
P\mapsto \frac1{n+1}\sum_{i=0}^n \big|P(x_i) - y_i\big|^2 
\]
over the dataset.

\begin{corollary}[Suboptimal Asymptotic Descent]\label{corollary suboptimal descent}
Let $-1 \leq x_i \leq  1$ for $i= 0, \dots, n$ and $y_i = f(x_i)$ for a target function $f:[-1,1]\to\R$. If
\[
\left|\frac1{n+1}\sum_{i=0}^n f(x_i) - \frac 12\int_{-1}^1f(x)\dx\right| < \left|\frac12\int_{-1}^1 f(x)\dx\right|,
\]
then $\|f-P_0\|_{L^2(-1,1)} < \lim_{d\to\infty} \|f-P_d\|_{L^2(-1,1)} = \|f\|_{L^2(-1,1)}$.
\end{corollary}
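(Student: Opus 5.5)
The plan is to compute both sides of the claimed inequality explicitly: the right-hand side is a direct consequence of Theorem \ref{main chebyshev theorem}, and the comparison reduces to an elementary quadratic identity.

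\emph{Step 1 (the limit).} By Theorem \ref{main chebyshev theorem}, $\|P_d\|_{L^2(-1,1)}\to 0$ as $d\to\infty$, applied with data $y_i=f(x_i)$. The reverse triangle inequality gives
\[
\big|\|f-P_d\|_{L^2(-1,1)}-\|f\|_{L^2(-1,1)}\big|\leq \|P_d\|_{L^2(-1,1)},
\]
so the limit exists and equals $\|f\|_{L^2(-1,1)}$. Here we implicitly assume $f\in L^2(-1,1)$; otherwise both sides are infinite and the statement is vacuous.

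\emph{Step 2 (identify $P_0$).} For $d=0$ the least squares problem is the minimization of $c\mapsto \frac1{n+1}\sum_i |c-f(x_i)|^2$ over constants $c$. Its unique minimizer is the empirical mean
\[
c:=\frac1{n+1}\sum_{i=0}^n f(x_i),
\]
so $P_0\equiv c$.

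\emph{Step 3 (expand the square).} Write $m:=\frac12\int_{-1}^1 f\dx$. Since the interval has length $2$,
\[
\|f-c\|_{L^2(-1,1)}^2=\|f\|_{L^2(-1,1)}^2-2c\int_{-1}^1 f\dx+2c^2 .
\]
Completing the square in $c$, the right-hand side equals
\[
\|f\|^2_{L^2(-1,1)}-4cm+2c^2=\|f\|_{L^2(-1,1)}^2+2\big((c-m)^2-m^2\big).
\]
The hypothesis of the corollary is exactly $|c-m|<|m|$. Hence $(c-m)^2-m^2<0$, which gives $\|f-P_0\|_{L^2(-1,1)}<\|f\|_{L^2(-1,1)}$. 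Combined with Step 1, this is the claim.

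There is no real obstacle here. The only points needing care are the normalization, namely the factor $2$ coming from the length of $[-1,1]$, which is why the hypothesis involves $\frac12\int f$, and the existence of the limit, which is settled in Step 1.
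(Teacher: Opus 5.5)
Your proof is correct and follows the paper's argument: you identify $P_0$ as the empirical mean $c$ and compare the quadratic $c\mapsto\|f-c\|_{L^2(-1,1)}^2$ with its value at $c=0$. The paper factors the difference as $c\big(c-\int_{-1}^1 f\dx\big)$ where you complete the square, and your Step 1 spells out the existence of the limit, which the paper takes for granted.

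One small correction to your side remark: if $f\notin L^2(-1,1)$, the conclusion reads $\infty<\infty$, which is false rather than vacuous. So $f\in L^2(-1,1)$ is a genuine implicit hypothesis of the statement.
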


In other words, if $\frac1{n+1} \sum_{i=0}^n f(x_i)$ has the same sign as the average of $f$ over the interval $(-1,1)$ and is not larger in magnitude than the true average by a factor of more than 2, then $P_0$ is a better approximation of $f$ than $P_d$ for high degree $d$. Similar results hold for weighted $L^2$-norms.

If $f$ is for instance a Runge type function, this indicates a true double descent phenomenon: $\|f-P_0\|_{L^2}$ and $\lim_{d\to\infty} \|f-P_d\|_{L^2(-1,1)}$ are both small, but $\|f-P_n\|_{L^2(-1,1)}$ is large for $d=n$ at the interpolation threshold. On the other hand, the descent at infinity is to a suboptimal value and {\em worse} than the approximation performance even of constant functions.

\begin{proof}
We have that $P_0=\frac{1}{n+1}\sum_{i=0}^n f(x_i)$ is the minimizer of the empirical mean squared error
\[
\widehat{MSE}_n(c) = \frac1{n+1}\sum_{i=0}^n \big(f(x_i) - c\big)^2.
\]
The true mean squared error 
\[
MSE(c) = \frac12 \int_{-1}^1 \big(f(x) - c\big)^2\dx = c^2 - c\int_{-1}^1 f(x)\dx + \frac12\int_{-1}^1 f^2(x)\dx
\]
is a parabola in $c$ and $MSE(c) < MSE(0)$ if and only if
\[
c\left(c- \int_{-1}^1f(x)\dx\right) = c^2 - c\int_{-1}^1f(x)\dx = MSE(c) - MSE(0) < 0.
\]
If the average of $f$ over the interval is positive, this reduces to $0<c < \frac22 \int_{-1}^1 f(x)\dx$ and similarly if the average is negative. In either case, the criterion can be written as
\[
\left|c - \frac12 \int_{-1}^1f(x)\dx\right| < \left|\frac12 \int_{-1}^1f(x)\dx\right|.
\]
Thus $P_0$ strictly outperforms $0 = \lim_{d\to\infty} P_d$ under the conditions of Corollary \ref{corollary suboptimal descent}.
\end{proof}

\section{Other Regularizations}\label{section others}

In this section, we explore two alternative regularizations more briefly and somewhat heuristically. Specifically, we focus on $\ell^2$-regularization for the basis coefficients in monomial basis (Section \ref{section monomial ell2} and Legendre basis (Sections \ref{section legendre} and \ref{section legendre sobolev}). For completeness, we also present numerical examples of interpolants with $\ell^1$-minimal coefficients with respect to all bases considered in Figure \ref{figure basis comparison ell1}.

All regularizations explored in this note are naturally adapted to data in the interval $[-1,1]$. We leave polynomial classes like Laguerre polynomials (for exponentially distributed data) or Hermite polynomials (for normally distributed data) for future work.

\begin{figure}
    \centering
\includegraphics[width=.32\textwidth]{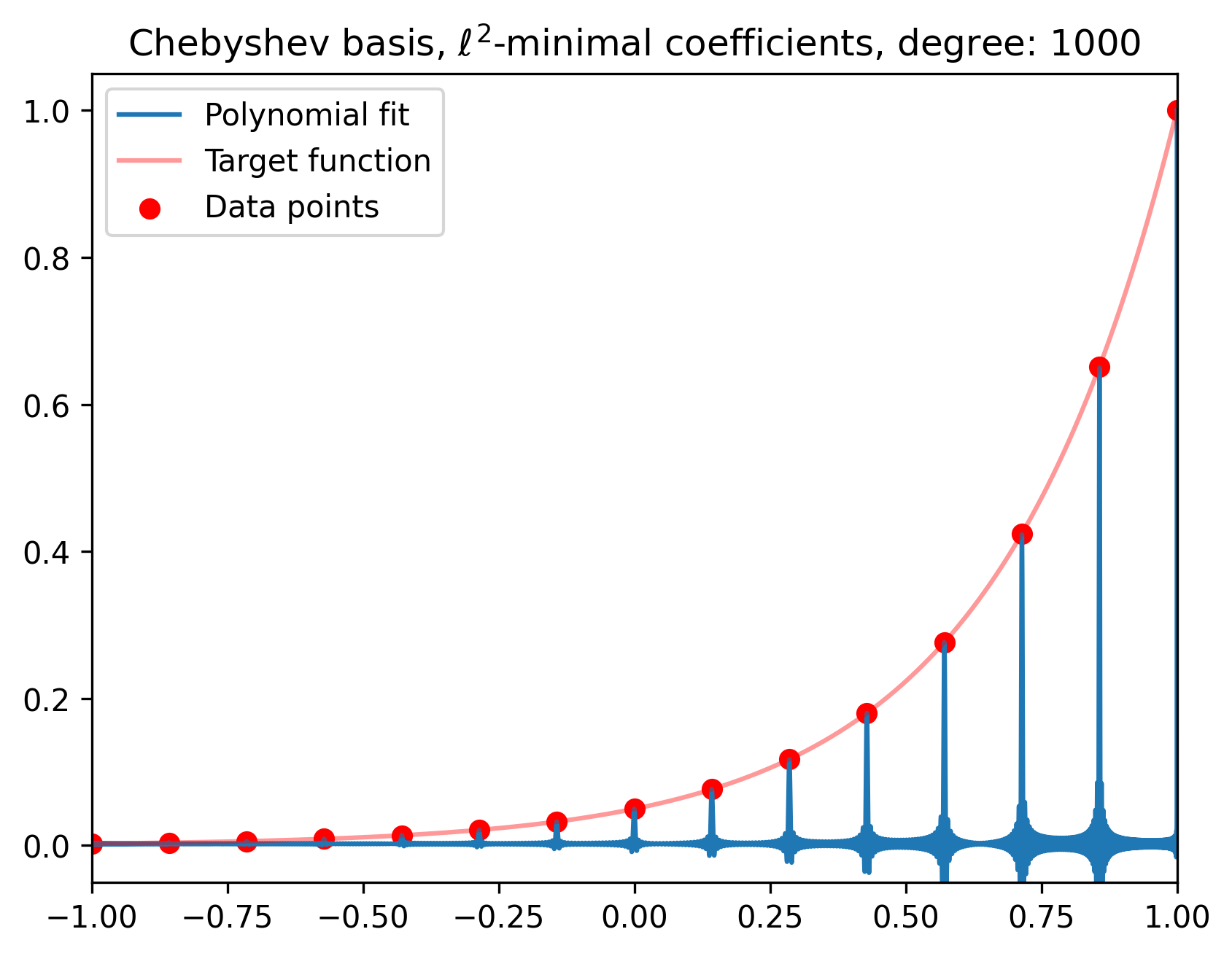}
\includegraphics[width=.32\textwidth]{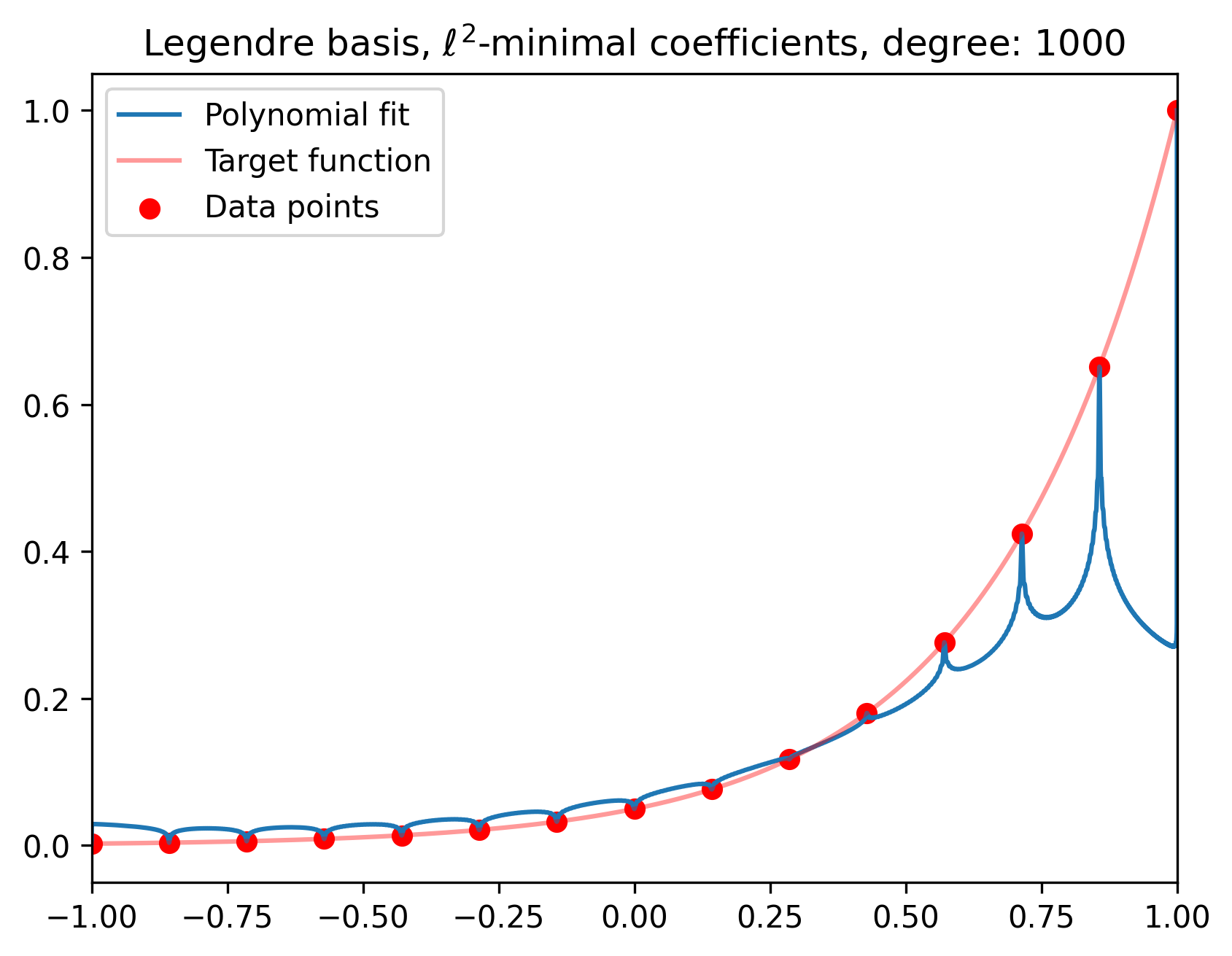}
\includegraphics[width=.32\textwidth]{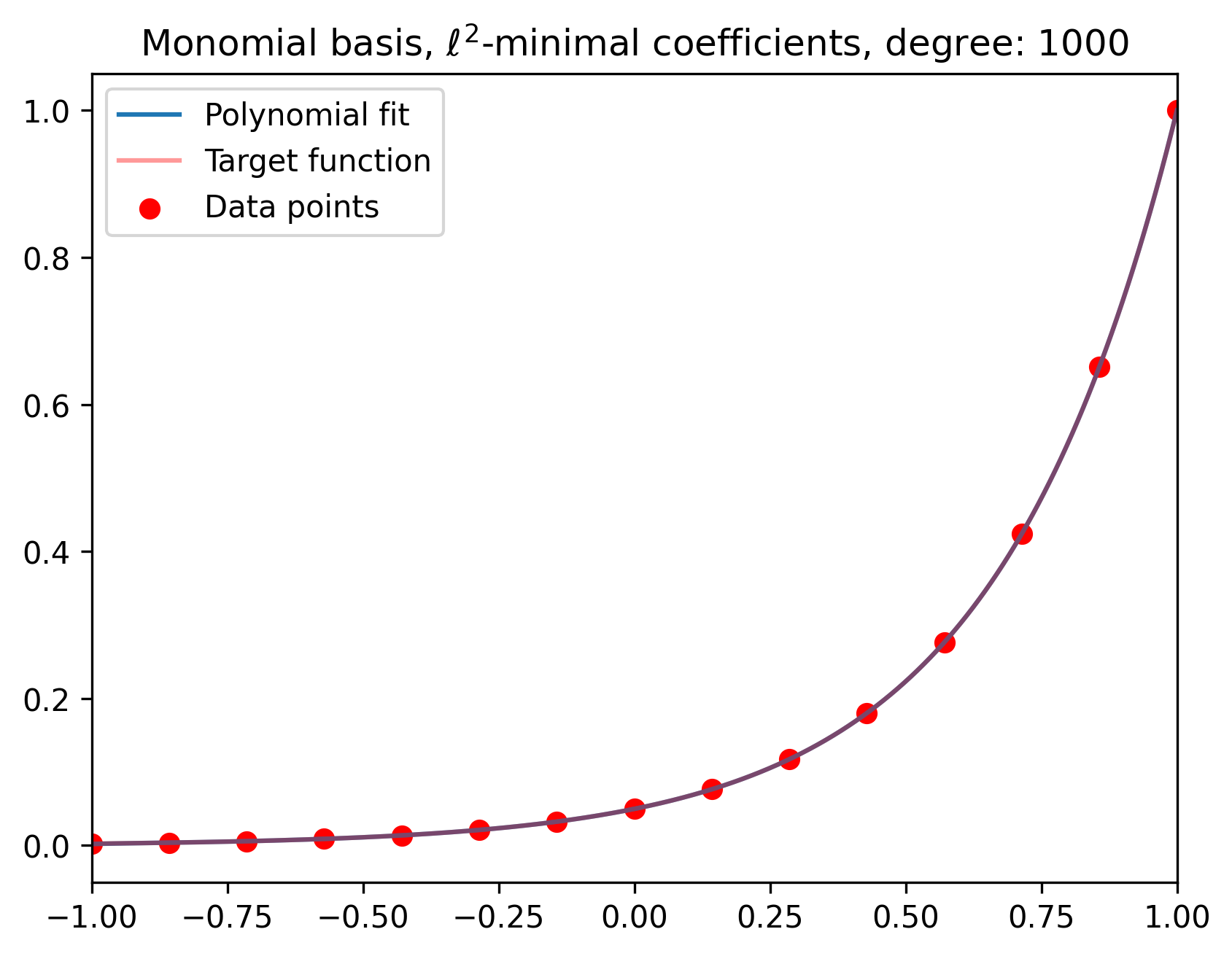}
\vspace{3mm}

\includegraphics[width=.32\textwidth]{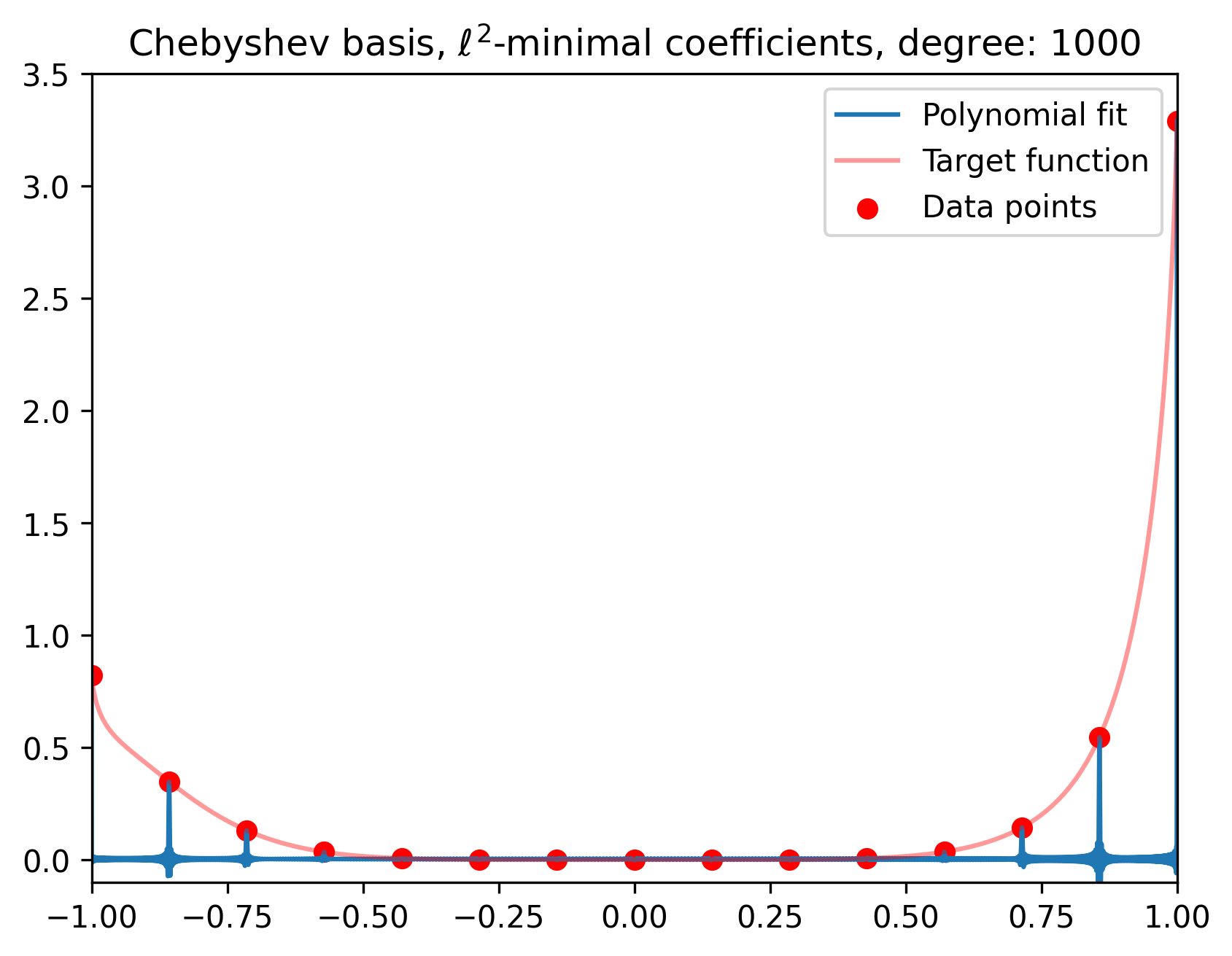}
\includegraphics[width=.32\textwidth]{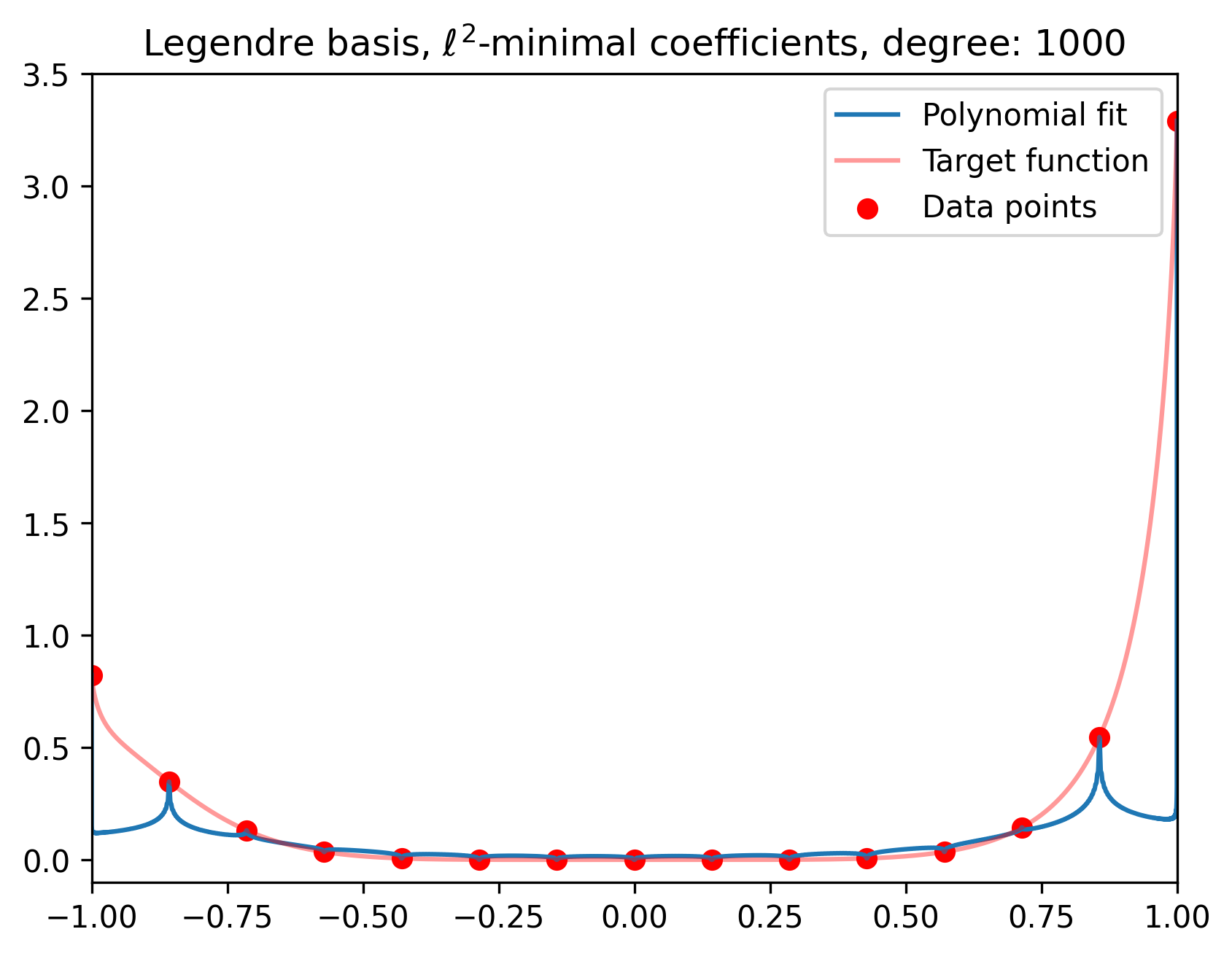}
\includegraphics[width=.32\textwidth]{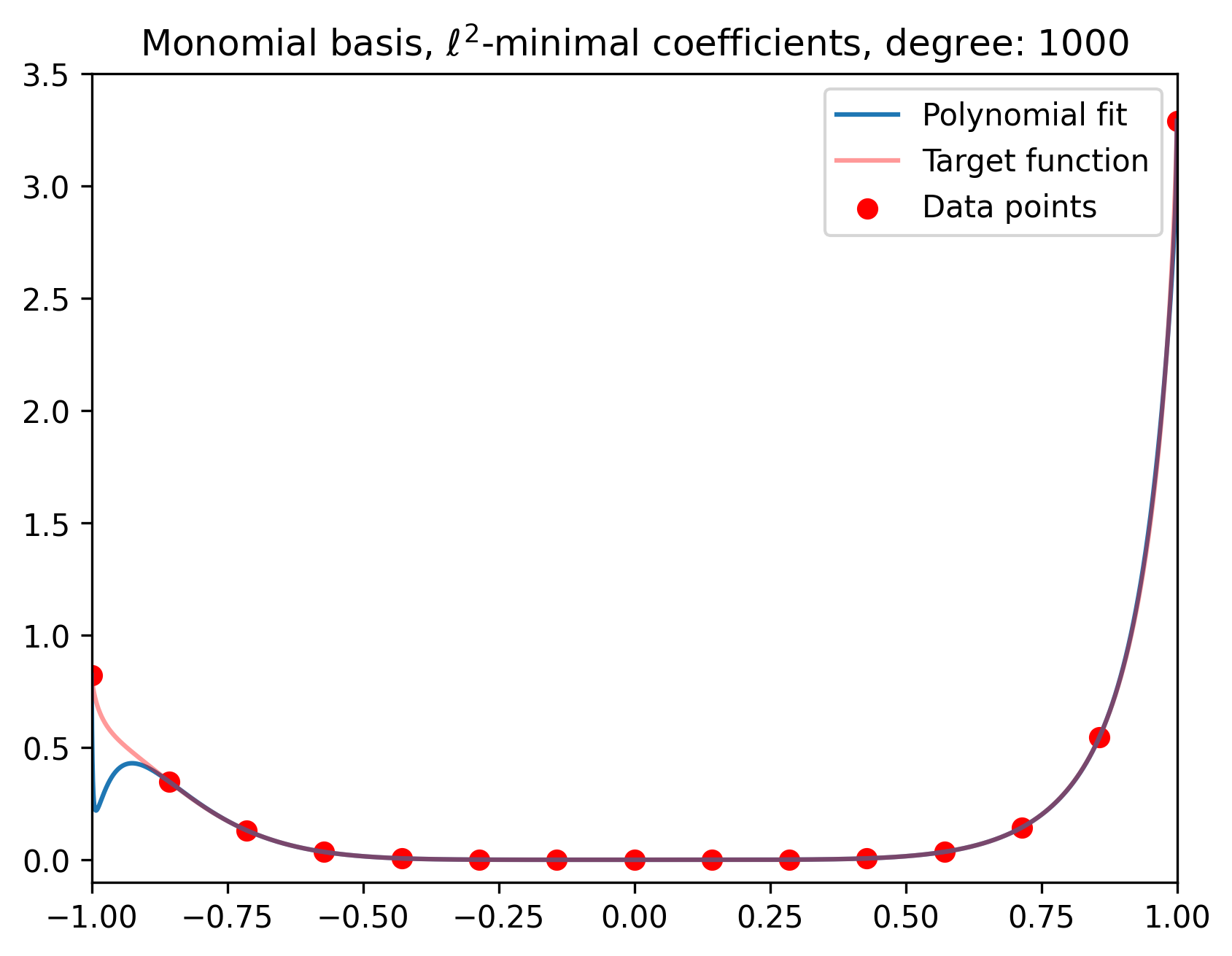}

\vspace{3mm}

\includegraphics[width=.32\textwidth]{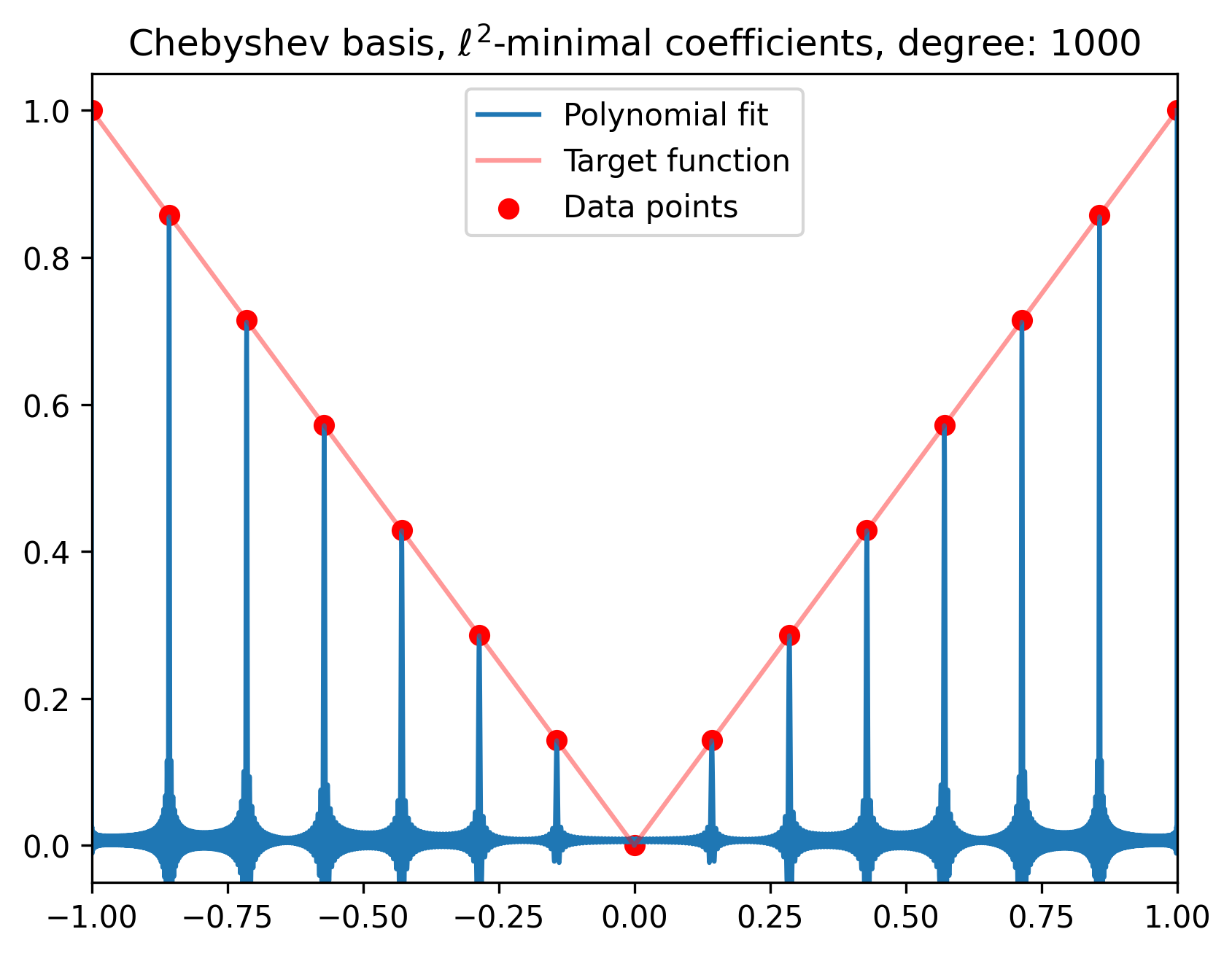}
\includegraphics[width=.32\textwidth]{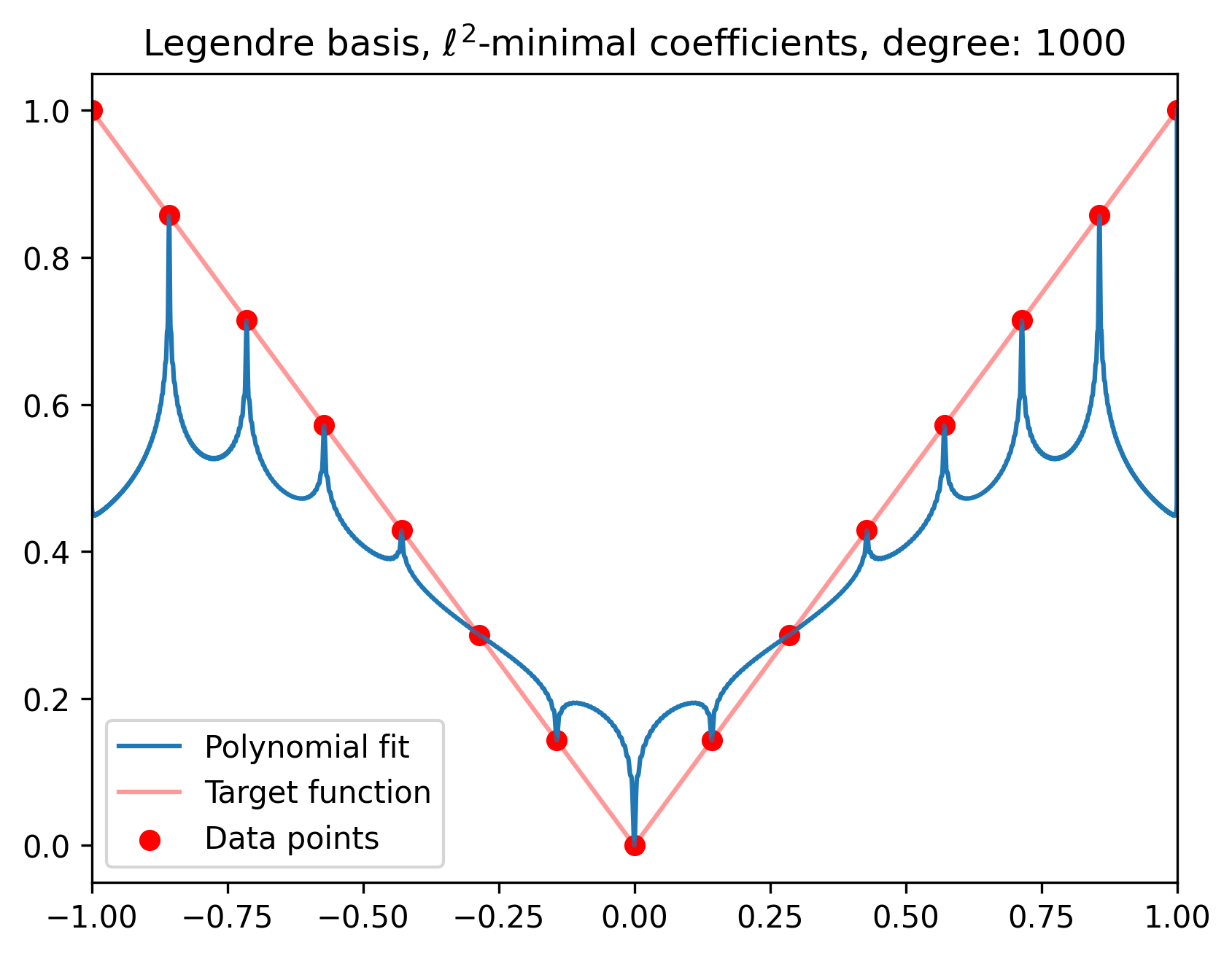}
\includegraphics[width=.32\textwidth]{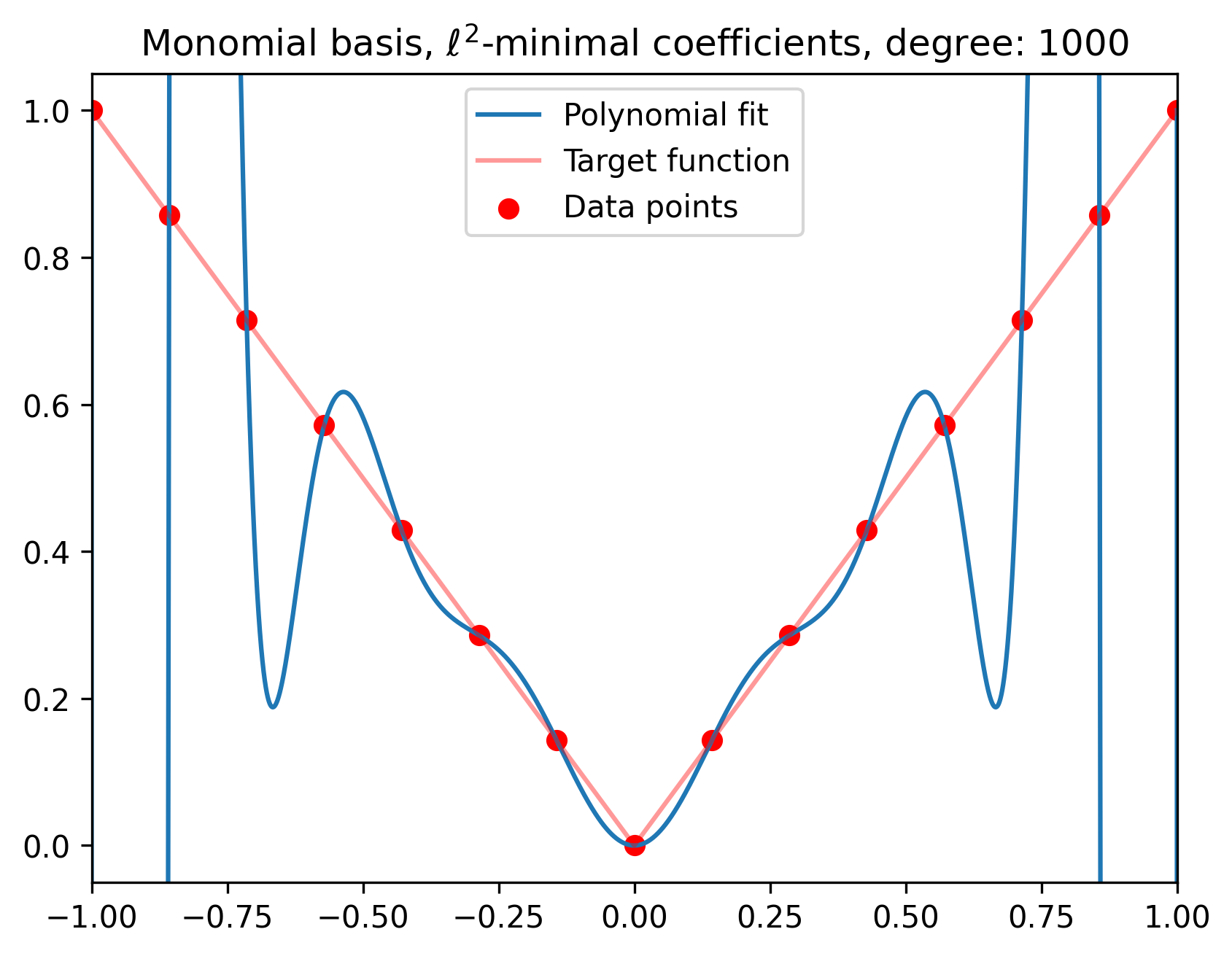}
\vspace{3mm}

\includegraphics[width=.32\textwidth]{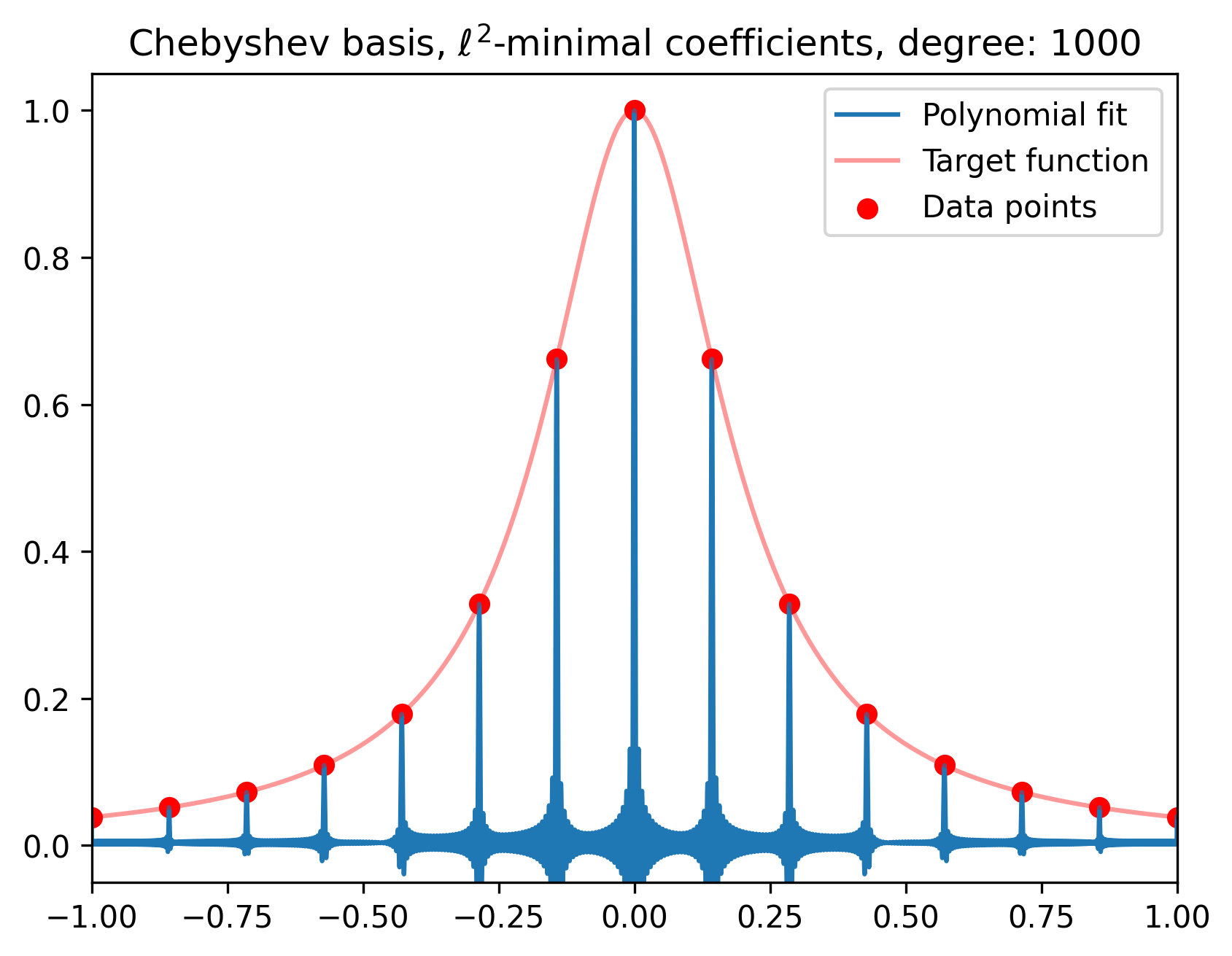}
\includegraphics[width=.32\textwidth]{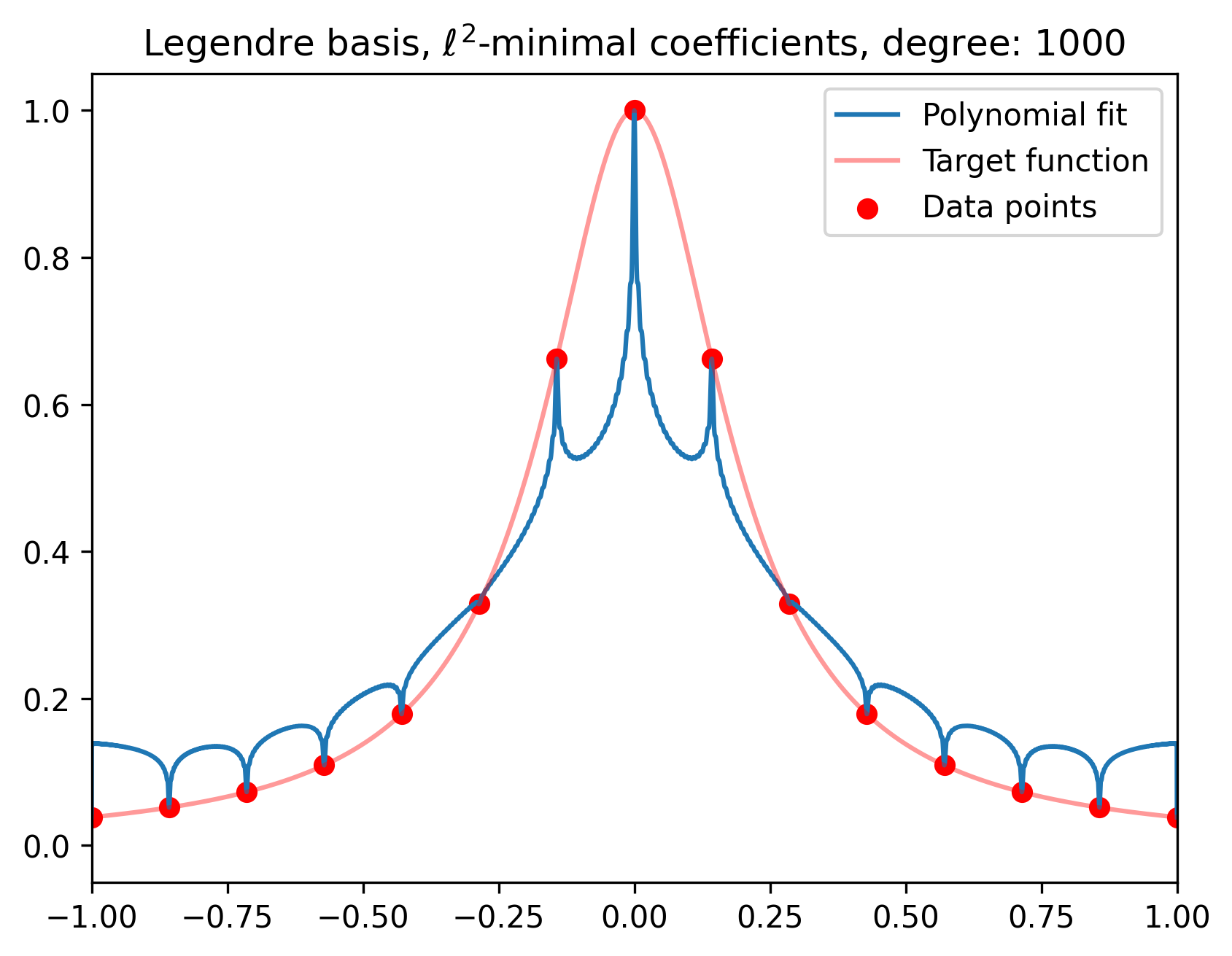}
\includegraphics[width=.32\textwidth]{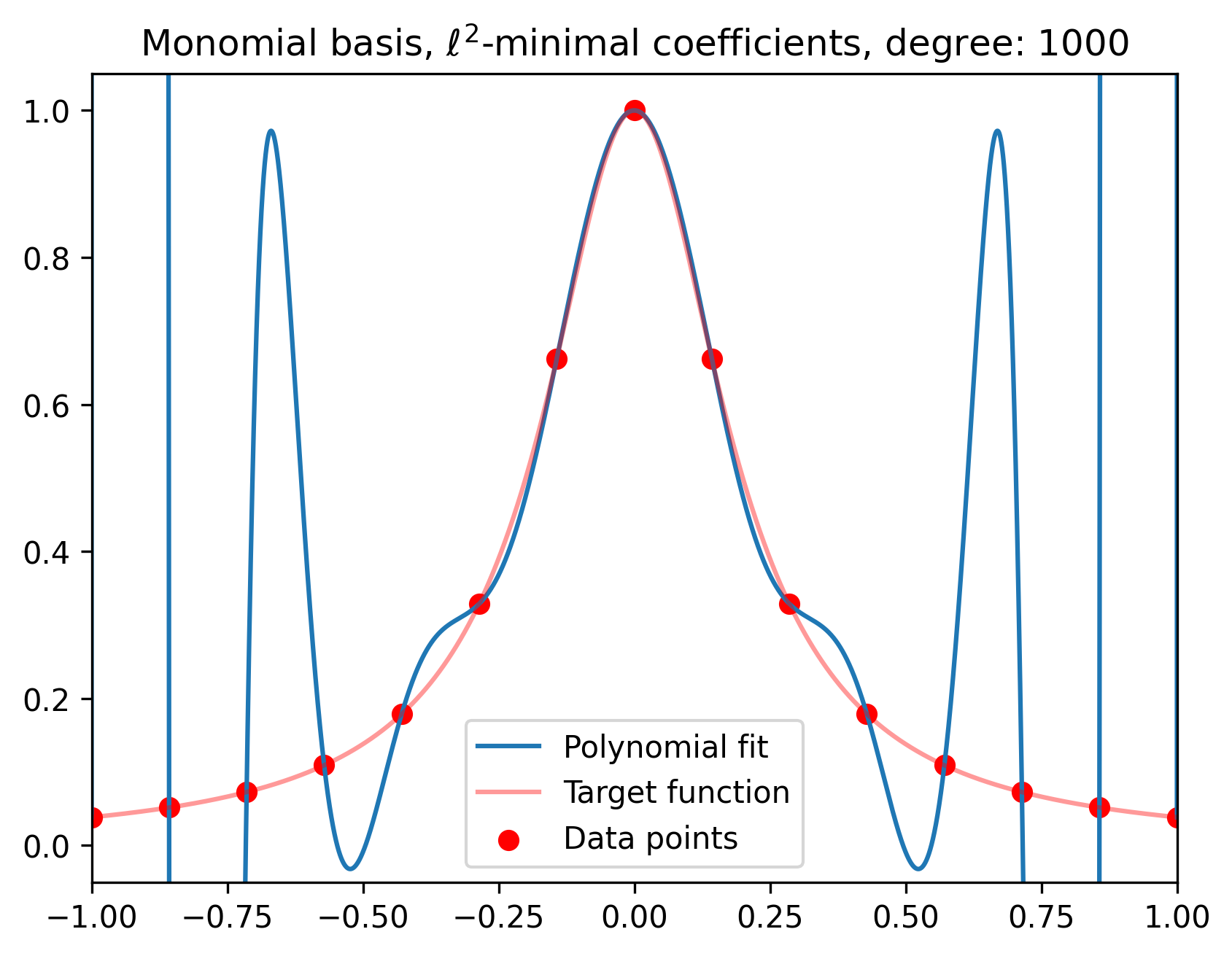}
\vspace{3mm}
    \caption{\label{figure basis comparison}
    Interpolating input-output pairs $x_i,\ y_i = f(x_i)$ at 15 equidistant points in $[-1,1]$ by polynomials of degree $d = 1,000$ with $\ell^2$-minimal coefficients for various functions and bases. First row: $f(x) = \exp(3x-3)$, second row: $f(x) = Li_2(x^{15}) + Li_2(x^6)$, third row: $f(x) = |x|$, fourth row: $f(x) = \frac1{1+25x^2}$. First column: Chebyshev basis, second column: Legendre basis, third column: Monomial basis.\\ 
    Interpolants in Chebyshev basis are always far from $f$, but do not blow up. Interpolants in Legendre basis are not close, but always roughly follow the function for degree $d=1,000$. Interpolants in monomial basis are spectacularly good approximants for the exponential series, even if the coefficient vector is $\ell^2$-minimal, not $\ell^1$-minimal. For the sum of dilogarithms, on the other hand, we find a small `boundary oscillation' where the $\ell^1$-optimal interpolant would have none.}
\end{figure}

\subsection{\texorpdfstring{Monomial Basis: $\ell^2$-coefficient norm}{Monomial Basis: l2-coefficient norm}}
\label{section monomial ell2}

We pursue the complex analytic perspective we took in the introduction for power series with summable coefficients.
As with $\ell^1$-regularization, if $a\in \ell^2(\mathbb C)$ is a sequence with square summable complex entries, the power series $f(z) = \sum_{k=0}^\infty a_k z^k$ has radius of convergence $r\geq 1$ since
\[
\sum_{k=0}^\infty |a_kz^k| = \sum_{k=0}^\infty |a_k|\,|z|^k \leq \left(\sum_{k=0}^\infty |a_k|^2\right)^{1/2}\left(\sum_{k=0}^\infty |z|^{2k}\right)^{1/2} = \frac{1}{\sqrt{1- |z|^2}} \,\|a\|_{\ell^2},
\]
i.e.\ the power series converges absolutely (and hence, converges) for $|z|<1$.
We consider this a natural class for monomial basis coefficient $\ell^2$-regularization.

Denote by $\mathbb D$ be the unit disk in the complex plane $\mathbb C$. Given a collection of data pairs $(z_i, w_i) \in \mathbb D^\circ \times \mathbb C$ with $i = 0,\dots, n$, we consider the problem
\begin{equation}\label{eq hardy space}
a = \argmin \left\{ \sum_{k=0}^\infty |a_k|^2  : a\in \ell^2(\mathbb C) \text{ s.t.\ } g_a(z_i) = w_i \text{ for all } i = 1,\dots, n\right\} \quad \text{where}\quad g_a(z) = \sum_{k=0}^\infty a_k z^k.
\end{equation}
The function $g_a$ associated to the parameter vector $a$ is the interpolating function for the given data which has minimal norm in the Hardy space $H^2$. The minimization problem is well-posed if the data points $z_i$ are inside the open unit disk where Hardy functions are analytic and a fortiori continuous. We consider the approximation of the Hardy space problem by polynomials
\begin{equation}\label{eq hardy polynomial}
a_d = \argmin \left\{ \sum_{k=0}^d |a_k|^2  : a\in \mathbb C^{d+1} \text{ s.t.\ } g_a(z_i) = w_i \text{ for all } i = 1,\dots, n\right\} \qquad \text{where}\quad g_a(z) = \sum_{k=0}^d a_k z^k.
\end{equation}
We establish well-posedness for a fixed dataset as the polynomial degree becomes unbounded.

\begin{theorem}\label{theorem hardy}
Assume that $|z_i|< 1$ for all $i= 1,\dots, n$ and $d\in \mathbb N$. Then there exist a unique solutions $a^*, a_d$ to the minimization problems \eqref{eq hardy space} and \eqref{eq hardy polynomial} respectively. 
\begin{itemize}
\item With the embedding $I_d: \mathbb C^{d+1}\to \ell^2(\mathbb C)$, $I_da = (a_0, a_1, \dots, a_d, 0, 0, \dots)$, we have that $I_da_d \to a^*$ strongly in $\ell^2(\mathbb C)$ and $g_{a_d}\to g_{a^*}$ uniformly on the closed disk $\overline{B_r(0)}\subset \mathbb C$ for all $r\in (0,1)$.

\item The minimum norm interpolants have the Szeg\H{o} type kernel representation
\[
g_{a_d}(z) = \sum_{i=0}^n b_{d,i}\,\frac{1 - (z\bar z_i)^{d+1}}{1- z\bar z_i}, \qquad g_{a^*}(z) = \sum_{i=0}^n b_{i}^*\,\frac{1}{1- z\bar z_i}
\]
for some $b_d, b^*\in \R^{n+1}$.

\item If $z_i, w_i \in \R$ for all $i=1,\dots, n$, then so are $a^*, a_d, b^*$ and $b_d$.
\end{itemize}
\end{theorem}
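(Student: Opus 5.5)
The plan is to treat both problems as minimum-norm problems over a closed affine subspace of a Hilbert space, using reproducing kernels. For $|z|<1$ define the vectors $e_z = (\bar z^k)_{k\ge0}\in\ell^2(\mathbb C)$ and their truncations $e_z^{d} = (\bar z^k)_{k=0}^d\in\mathbb C^{d+1}$, so that $g_a(z) = \langle a, e_z\rangle$ (inner product linear in the first slot). Note $\|e_z\|^2 = (1-|z|^2)^{-1}<\infty$ because $|z|<1$, which is exactly where the assumption $|z_i|<1$ enters. The interpolation constraints then read $\langle a, e_{z_i}\rangle = w_i$. Assuming the $z_i$ are distinct, the vectors $e_{z_i}$ are linearly independent: for the truncations with $d\ge n$ this is Vandermonde invertibility, and it passes to $\ell^2$ since truncations of a dependent family stay dependent. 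Hence the constraint set is a nonempty closed affine subspace, and strict convexity of the squared norm gives a unique minimizer. This is the projection of $0$ onto the affine subspace, which lies in $\mathrm{span}\{e_{z_i}\}$.

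Writing $a^* = \sum_i b_i^* e_{z_i}$ and $a_d = \sum_i b_{d,i} e^d_{z_i}$ gives the kernel formulas directly. We have $g_{a^*}(z) = \sum_i b^*_i \langle e_{z_i}, e_z\rangle = \sum_i b_i^*\sum_k (z\bar z_i)^k = \sum_i b_i^*/(1-z\bar z_i)$, and the finite geometric sum yields $(1-(z\bar z_i)^{d+1})/(1-z\bar z_i)$. The coefficients are determined by the Gram systems $G b^* = w$ and $G_d b_d = w$, where $G_{ji} = 1/(1-z_j\bar z_i)$ and $(G_d)_{ji} = (1-(z_j\bar z_i)^{d+1})/(1-z_j\bar z_i)$ (up to conjugation conventions). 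Both Gram matrices are positive definite by the linear independence above. For real data, $G$, $G_d$ and $w$ are real, so $b^*$ and $b_d$ are real, and therefore $a^* = \sum b_i^* (z_i^k)_k$ and $a_d$ are real. One can also argue by uniqueness: the conjugate coefficient vector is again a minimizer.

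For convergence, $G_d\to G$ entrywise, geometrically fast since $|z_j\bar z_i|\le \max|z_i|^2<1$. Because $G$ is invertible, $G_d^{-1}\to G^{-1}$ and so $b_d\to b^*$. Then
\[
\|I_d a_d - a^*\|_{\ell^2} \le \sum_i |b_{d,i}-b_i^*|\,\|e_{z_i}\| + \sum_i |b_i^*|\,\|e_{z_i}-I_de^d_{z_i}\|,
\]
and both terms tend to zero, the second because the tails $\|e_{z_i}-I_de^d_{z_i}\|^2 = \sum_{k>d}|z_i|^{2k}$ vanish. Uniform convergence on $\overline{B_r(0)}$ follows from the Cauchy–Schwarz estimate in the text: $|g_{a_d}(z)-g_{a^*}(z)|\le (1-r^2)^{-1/2}\|I_da_d-a^*\|_{\ell^2}$.

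The main obstacle is modest: establishing linear independence of $\{e_{z_i}\}$ and the invertibility of $G_d$ uniformly in $d$. The invertibility of $G_d$ for $d\ge n$ also requires $d\ge n$ for existence, a condition I will note even though the statement says $d\in\mathbb N$. Continuity of matrix inversion at the invertible limit $G$ handles the uniformity once $G$ itself is positive definite.
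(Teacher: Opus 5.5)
Your proof is correct, and it takes a genuinely different route from the paper's.

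\textbf{The paper's route.} The paper argues variationally:
\begin{itemize}
\item $a^*$ is obtained by the direct method, using weak compactness in $\ell^2(\mathbb{C})$, weak lower semicontinuity of the norm, and weak continuity of $a\mapsto\langle a,z_i^{\mathbb N}\rangle$.
\item $\|I_da_d\|_2$ is nonincreasing and bounded below by $\|a^*\|_2$. The reverse inequality $\lim_d\|a_d\|_2\le\|a^*\|_2$ comes from truncating $a^*$ and repairing the interpolation conditions as in Lemma~\ref{existence lemma}.
\item Every weak subsequential limit of $I_da_d$ is identified with $a^*$ by uniqueness, and weak convergence plus convergence of norms gives strong convergence.
\item The kernel formula is quoted afterwards from the representer theorem. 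Realness follows because $\mathrm{Re}(a)$ is admissible with strictly smaller norm.
\end{itemize}

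\textbf{Your route.} You put the representer structure first: the minimizer is the projection of $0$ onto a closed affine subspace, so it lies in $\mathrm{span}\{e_{z_i}\}$. This collapses both problems to the finite Gram systems $G_db_d=w$ and $Gb^*=w$. Convergence then follows from $G_d\to G$ and continuity of matrix inversion, and realness is read off from the real Gram matrices.

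\textbf{What each buys.} Your argument is more elementary and avoids the truncation-and-repair step. It is also quantitative. With $\rho=\max_i|z_i|<1$ you have $\|G_d-G\|=O(\rho^{2d})$ and $\|e_{z_i}-I_de^d_{z_i}\|=O(\rho^{d})$. Hence $\|I_da_d-a^*\|_{\ell^2}=O(\rho^{d})$, and the convergence is geometric on every $\overline{B_r(0)}$.

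The paper's compactness argument, in contrast, never needs a finite-dimensional Gram system. That is why it transfers to two settings where your method would not:
\begin{itemize}
\item the non-reflexive $\ell^1$ problem of Theorem~\ref{theorem no sign condition}, using weak-$*$ compactness, where there is neither a projection nor uniqueness;
\item the varying node sets of Theorem~\ref{theorem hardy 2}, where the Gram matrices grow with $n$ and their conditioning is not uniformly controlled.
\end{itemize}

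\textbf{Caveats.} Your assumptions that the nodes be distinct and that $d\ge n$ are genuinely needed; for $d<n$ the constraint set is generically empty. The statement glosses over both.

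It is also worth saying explicitly that your Gram systems give $b^*,b_d\in\mathbb{C}^{n+1}$ in general, and that this is the correct conclusion. The ``$\mathbb{R}^{n+1}$'' in the second bullet cannot hold without the real-data hypothesis of the third bullet. For example, a single node $z_0=0$ with $w_0=i$ forces $b^*_0=b_{d,0}=i$.
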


The proof relies on the weak topology of reflexive and separable Hilbert spaces, see e.g.\ \cite[Chapter 3]{brezis2011functional}. As in the proof of Theorem \ref{theorem no sign condition}, we denot $z^\N = (1,z,z^2,\dots) \in \ell^1(\C)$.

\begin{proof}
The existence of a solution $a^*\in \ell^2(\C)$ follows from the direct method of the calculus of variations: We use that
\begin{enumerate}
\item the squared norm is lower semi-continuous under weak convergence,
\item bounded subsets of $\ell^2(\mathbb C)$ are weakly sequentially compact, and
\item the interpolation condition $\langle a, z_i^{\mathbb N}\rangle = y_i$ is preserved under weak convergence.
\end{enumerate}
A minimizing sequence hence converges to a limit point (in the weak topology) up to subsequence, and the limit point is a minimizer. A variation of the proof is given with more detail for Theorem \ref{theorem no sign condition}. The uniqueness of $a^*$ follows from the strict convexity of the function $a\mapsto \|a\|_2^2$.

By construction, we see that the sequence of norms $\|I_da_d\|_2^2$ is monotone decreasing in $d$ and $\|I_da_d\|_2^2 \geq \|a^*\|_2^2$ for all $d\in \mathbb N$. On the other hand, we find that $\lim_{d\to\infty} \|a_d\|_2^2 \leq \|a^*\|^2_2$ by the same arguments as in Lemma \ref{existence lemma}.

From the weak compactness of the unit ball in $\ell^2(\mathbb C)$, we find that any subsequence of $I_da_d$ has a further subsequence which converges to a limit point $\tilde a$ such that
\begin{itemize}
    \item $g_{\tilde a}(z_i) =\lim_{d\to\infty} g_d(z_i) = \lim_{d\to\infty} \langle I_da_d, z_i^\N\rangle = w_i$ for all $i$ and
    \item $\|\tilde a\|_2^2 \leq \|a^*\|_2^2$
\end{itemize}
since the norm is weakly lower semi-continuous and the linear functionals $a\mapsto \langle a, z^{\mathbb N}\rangle$ are continuous on $\ell^2(\mathbb C)$ for $|z|<1$. Since $a$ is the unique such point, we conclude that every subsequence of $I_da_d$ has a further subsequence which converges weakly to $a^*$. Since weak convergence is convergence in the weak topology, we conclude that the whole series converges.

Since additionally $\|a^*\|_2 = \lim_{d\to\infty} \|I_da_d\|_2$, the sequence in fact converges strongly. The uniform convergence on compact disks follows as in Theorem \ref{theorem no sign condition} with a minor modification.

The representation formula can be derived from the representer theorem or `kernel trick' \cite[Theorem 16.1]{shalev2014understanding} with feature map
\[
\phi(x) = \begin{pmatrix}1\\x\\\vdots\\ x^d\end{pmatrix} \qquad \text{ and kernel } K(x,x') = \phi(x)^T \phi(x') = \sum_{k=0}^d (xx')^d = \frac{ 1- (xx')^d}{1-xx'}.
\]
The fact that the weights $a,b$ can be chosen to be real follows from the fact that $Re(a),\ Re(b)$ also satisfy the interpolation conditions if $w$ is real, but have strictly lower norm if $a, b$ have a non-zero imaginary part.
\end{proof}

In this note, we do not explore rigorously whether a Runge type phenomenon arises in this setting, but numerical evidence in Figure \ref{figure basis comparison} suggests that it does at times when the target function is not in $H^2$. This is potentially unsurprising as $-1, 1$ are among the data points here and the limiting Szeg\H{o} kernel becomes unbounded at the boundary (even as $K_d$ remains finite for $d\in\mathbb N$). On the positive side, we note the following.

\begin{theorem}\label{theorem hardy 2}
Assume that $w_i = f(z_i)$ and the sets $\mathcal X_n = \{z_{0,n}, \dots, z_{n,n}\}$ converge to a compact limit $K\subseteq \mathbb D$ in Hausdorff distance which has an accumulation point in the open disk $\mathbb D^\circ$. Let $a_n$ the $\ell^2$-minimal element of $\ell^2(\mathbb C)$ which interpolates $f$ on $\mathcal X_n$. Then $a_n$ remains bounded in $\ell^2(\mathbb C)$ if and only if there exists $f\in H^2$ such that $f^*\equiv f$ on $K$. If $f^*\in H^2$, then $f_{a_n} \to f^*$ strongly in $H^2$.
\end{theorem}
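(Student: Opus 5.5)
The argument runs in the Hilbert space $H^2\cong\ell^2(\mathbb C)$, where point evaluation at $z\in\mathbb D^\circ$ is the continuous functional $a\mapsto\langle a, \bar z^{\mathbb N}\rangle$. By the notation of the statement, I read the condition as: there exists $f^*\in H^2$ with $f^*\equiv f$ on $K$ (or, if $f$ is defined only on the data, $f^*(z_{i,n})=f(z_{i,n})$ for all data points). Let $a_n$ be the minimal-norm interpolant of Theorem \ref{theorem hardy} for $\mathcal X_n$, and write $g_n = g_{a_n}$.

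\emph{Boundedness implies existence of $f^*$.} Suppose $\|a_n\|_2\le M$. Weak compactness of bounded sets in $\ell^2(\mathbb C)$ gives a subsequence $a_n\wto a^*$. As in Theorem \ref{theorem no sign condition}, the functions $g_n$ then converge to $g_{a^*}$ uniformly on every $\overline{\mathbb D_r}$ with $r<1$. The reason is that they are uniformly Lipschitz there, since $|g_n'(z)|\le \|a_n\|_2\big(\sum_k k^2r^{2k-2}\big)^{1/2}$, and they converge pointwise. Now take $z\in K\cap\mathbb D^\circ$ and points $z_n\in\mathcal X_n$ with $z_n\to z$. Repeating the chain of equalities from Step 2 of Theorem \ref{theorem no sign condition} gives $g_{a^*}(z)=\lim_n g_n(z_n)=\lim_n f(z_n)=f(z)$, which uses continuity of $f$ near $z$ (true if $f$ is analytic in $\mathbb D^\circ$). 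So $f^*:=g_{a^*}\in H^2$ agrees with $f$ on $K\cap\mathbb D^\circ$.

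\emph{Existence of $f^*$ implies boundedness.} Assume $f^*\in H^2$ with coefficient vector $c$. I need $f^*$ to interpolate the data on every $\mathcal X_n$, and I will assume $f$ is analytic on $\mathbb D^\circ$. Then $f^*=f$ on $K\cap\mathbb D^\circ$, which has an accumulation point in $\mathbb D^\circ$, so the identity theorem gives $f\equiv f^*$ on $\mathbb D^\circ$. Hence $c$ is admissible for every $n$, and minimality yields $\|a_n\|_2\le\|c\|_2$.

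\emph{Strong convergence.} Keep the assumption $f^*\in H^2$, so that $\|a_n\|_2\le\|c\|_2$. Every subsequence has a further weakly convergent subsequence. By the first step its limit $\tilde a$ satisfies $g_{\tilde a}=f$ on $K\cap\mathbb D^\circ$, so $g_{\tilde a}=f^*$ by the identity theorem, i.e.\ $\tilde a=c$. The whole sequence therefore converges weakly to $c$. Weak lower semicontinuity of the norm together with $\|a_n\|_2\le\|c\|_2$ gives $\|a_n\|_2\to\|c\|_2$. In a Hilbert space, weak convergence plus convergence of norms is strong convergence, so $g_n\to f^*$ in $H^2$.

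The main obstacle is making the statement precise rather than any estimate. The points $z_{i,n}$ may approach $\partial\mathbb D$, where $f$ may not be defined or continuous, and one must ensure that $f^*$ genuinely interpolates on all of $\mathcal X_n$ and not only on $K$. I would handle this by taking $f$ analytic in $\mathbb D^\circ$ and $\mathcal X_n\subset\mathbb D^\circ$, and by using only the points of $K$ in the open disk, which is exactly what the accumulation-point hypothesis supplies.
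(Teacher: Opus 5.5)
Your proof is correct and follows the same route as the paper's much terser proof. Both use weak compactness in $\ell^2(\mathbb C)$, identify every weak subsequential limit through the Hausdorff limit $K$ and the identity theorem as in Step 2 of Theorem \ref{theorem no sign condition}, bound $\|a_n\|_2\le\|c\|_2$ by minimality against the competitor $c$, and conclude strong convergence from weak convergence plus convergence of norms. Your assumption that $f$ is analytic on $\mathbb D^\circ$ with $\mathcal X_n\subset\mathbb D^\circ$ is the standing hypothesis the paper silently carries over from Theorem \ref{theorem no sign condition}, and it is genuinely needed for the ``if'' direction, so stating it explicitly correctly repairs the imprecise statement rather than departing from it.
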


However, to the best of our knowledge it remains open for which functions $f_{a_d}\to f^*$ in $L^2$ or uniformly despite the fact that $a_d$ may diverge in $\ell^2$.

\begin{proof}
The proof follows that of Theorem \ref{theorem no sign condition} with minor modifications. As previously, strong convergence follows from the fact that $a_n \wto a^*$ weekly and $\|a_n\| \to \|a\|$. The main difference is that $f$ no longer satisfies an $L^\infty$-bound on $\partial \mathbb D$, but only satisfies an $L^2$-bound due to the orthonormality of $\{\phi\mapsto e^{id\phi}\}_{d\in\mathbb N_0}$.
\end{proof}

\subsection{Legendre Basis: An RKHS approach} \label{section legendre}
The Legendre basis $\{p_0, \dots, p_d\}$ is a class of polynomials such that the first $d+1$ basis functions span the same space as the first $d+1$ monomials, i.e.\ the space $\mathcal P_d$ of polynomials of degree $d$ and such that
\[
\langle p_d, p_r\rangle = \int_{-1}^1p_d(x)\,p_r(x)\dx = \frac{2\,\delta_{dr}}{2d+1}
\]
with respect to the $L^2(-1,1)$-inner product. We pursue the same reproducing kernel Hilbert space (RKHS) strategy as for the monomial basis with square summable coefficients, but entirely on the real line.
Specifically, considering feature maps $\phi_d:(-1,1)\to \R^{d+1},\ \phi_d(x) = (p_0(x),\dots,p_d(x))$, the approximation is characterized by the kernel
\[
K_d(x, x') =\phi(x)^T \phi(x') = \sum_{k=0}^d p_k(x)\,p_k(x').
\]
In $L^2((-1,1)^2)$, we observe that 
\begin{align*}
\|K_d\|_{L^2((-1,1)^2)}^2 &= \sum_{k=0}^d \sum_{l=0}^d\int_{-1}^1\int_{-1}^1 p_k(x)p_k(x')p_l(x)p_l(x')\dx\dx'\\
    &= \sum_{k, l=0}^d \left(\int_{-1}^1 p_k(x)p_l(x)\dx\right) \left(\int_{-1}^1p_k(x')p_l(x')\dx'\right)\\
    &= \sum_{k,l=0}^d \left(\frac {2\delta_{kl}}{2k+1}\right)^2 
    = \sum_{k=0}^d\frac4{(2k+1)^2} \leq 4 + \sum_{k=1}^\infty \frac1{k^2} = 4 + \frac{\pi^2}6<+\infty.
\end{align*}
By the same computation, we could have shown that 
\[
\|K_D - K_d\|_{L^2} \leq \sqrt{\sum_{k=d+1}^D \frac4{(2k+1)^2}} \to 0
\]
as $d,D\to +\infty$, i.e.\ the kernels $K_d$ are Cauchy and thus converge to a limit $K^*$ strongly in $L^2((-1,1)^2)$. On the other hand, the kernel {\em diverges} along the diagonal:
\[
\|K_d(x,x)\|_{L^1(-1,1)}^2 = \int_{-1}^1 \sum_{k=0}^d p_k(x)^2\dx = \sum_{k=0}^d \|p_k\|_{L^2(-1,1)}^2 = \sum_{k=0}^d\frac2{2d+1} \geq 1 + \sum_{k=1}^d \frac 1k\sim \log(d)
\]
by the orthogonality of the Legendre polynomials. The Laplace-Heine formula \cite[Theorem 8.21.2]{szeg1939orthogonal} for the asymptotic behavior of Legendre polynomials
\begin{equation}\label{eq laplace heine} 
p_k(\cos \theta) = \sqrt{\frac2{\pi k\,\sin \theta}} \,\cos\big((k+1/2)\theta - \pi/4\big) + O(k^{-3/2})\qquad \text{for }\theta \in (0,\pi)
\end{equation}
indicates that the kernels $K_d(x,x)$ along the diagonal even diverge in $L^2(a,b)$ for any sub-interval $(a,b)$ as the kernels (roughly) correspond to oscillations of increasing frequency $\sim k$ and slowly decreasing amplitude $\sim k^{-1/2}$. Thus, for a `generic' choice of points $x_1,\dots, x_n$ (for instance, independent uniform random sample points), we expect that 
\begin{itemize}
\item $K_d(x, x_i)$ converges in $L^2(-1,1)$ for all $i$,
\item $K_d(x_i, x_j)$ converges for all $i\neq j$ (i.e.\ at a generic pair of points), and
\item $K_d(x_i, x_i) \to +\infty$.
\end{itemize}

If these assumptions apply for the set $\{x_1,\dots, x_n\}$ and $f_d(x) = \sum_{i=1}^n a_{d,i} \,K_d(x,x_i)$ is the RKHS interpolant for data $\{(x_i, y_i) : i=1,\dots, n\}$, then
\[
\begin{pmatrix}
K_d(x_1,x_1) &\cdots& K_d(x_n,x_1)\\ \vdots &\ddots &\vdots\\
K_d(x_1,x_n) &\cdots & K_d(x_n,x_n)
\end{pmatrix}\begin{pmatrix} a_{d,1}\\ \vdots\\ a_{d,n}\end{pmatrix} = \begin{pmatrix} y_1\\ \vdots \\ y_n\end{pmatrix} \qquad\Ra\quad a_{d,i} \approx \frac{y_i}{K_d(x_i,x_i)} \approx \frac{y_i}{\log d}
\]
since the matrix is heavily dominated by the diagonal for sufficiently large $d$. If $K_d(x,x_i)$ remains bounded in $L^2(-1,1)$ and $a_{d,i}$ converges to zero for all $i$, we conclude that $f_d\to 0$ in $L^2(-1,1)$. Thus, we expect Legendre polynomials to asymptotically produce the same behavior as Chebyshev polynomials.

\begin{conjecture}
For $d\in\mathbb N_0$, let $p_0,\dots, p_d$ the Legendre basis for the space $\mathcal P_d$ of polynomials of degree at most $d$. Let $x_1, \dots, x_n \in [-1,1]$ and $y_1, \dots, y_n \in \R$. For $d\geq \mathbb N$, let
\[
a_d = \argmin \left\{\sum_{k=0}^d a_k^2 : \sum_{k=0}^d a_kp_k(x_i) = y_i \text{ for all } i = 1,\dots, n\right\}.
\]
Then the sequence of polynomials $P_d(x) = \sum_{k=0}^d a_{k,d}\,p_k(x)$ converges to zero strongly in $L^2(-1,1)$.
\end{conjecture}

However, as the divergence in $K_d(x,x)$ is logarithmically slow, we expect this deterioration to happen very slowly -- see Figure \ref{figure orthogonal bases} for a comparison between the Legendre and Chebyshev basis in interpolating constant functions. 

A similar `slow deterioration' phenomenon in the context of double descent was explored in \cite{ma2020slow}, where the authors show that the phenomenon which affects absolute minimizers of the least squares energy does not affect solutions found by gradient descent optimization over a time scale spanning several orders of magnitude. Such partial regularization is observed in Figure \ref{figure basis comparison} for heavy overparametrization of degree $d=1,000$ with $n= 15$ data points.

\begin{figure}
    \centering
\includegraphics[width=.32\textwidth]{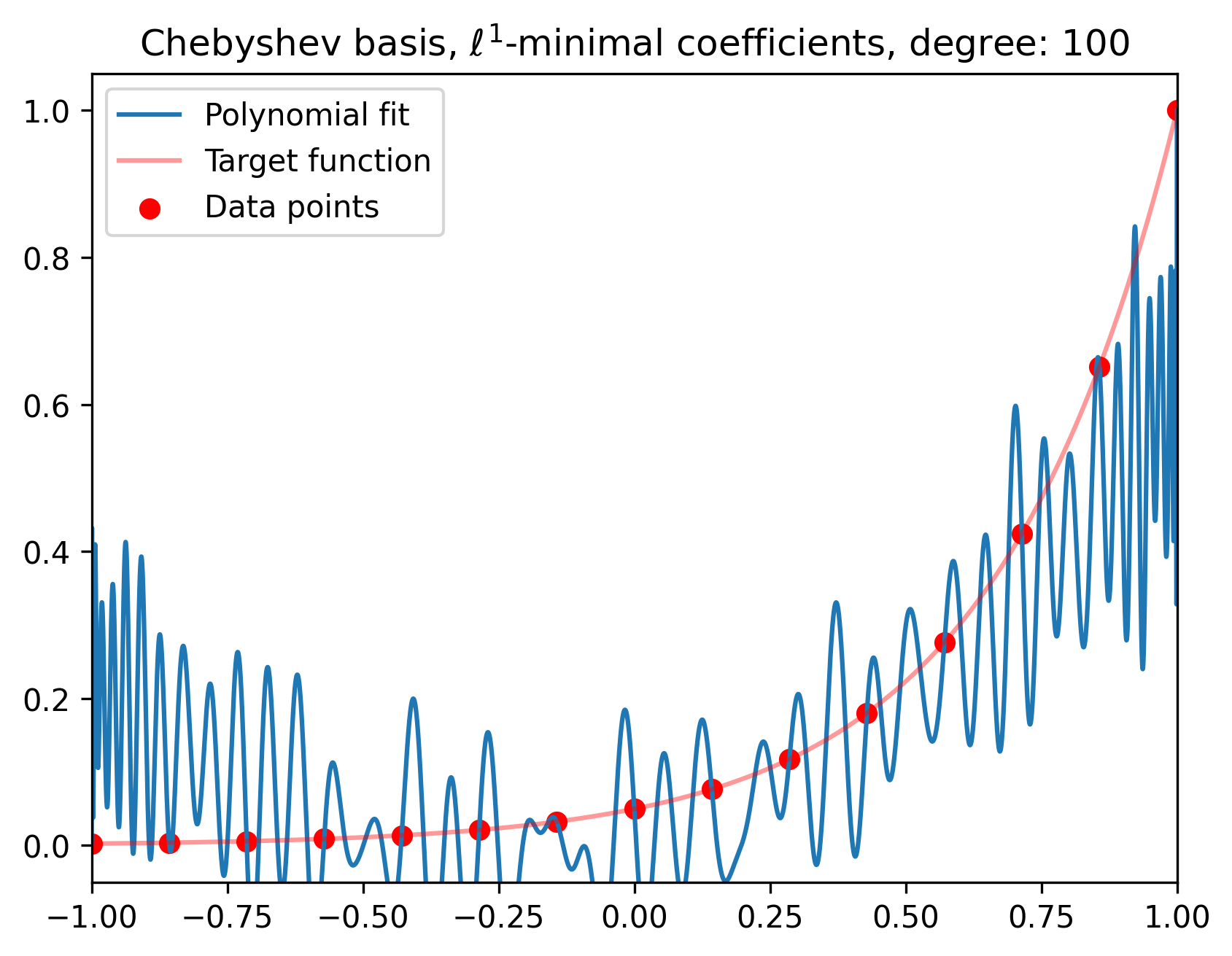}
\includegraphics[width=.32\textwidth]{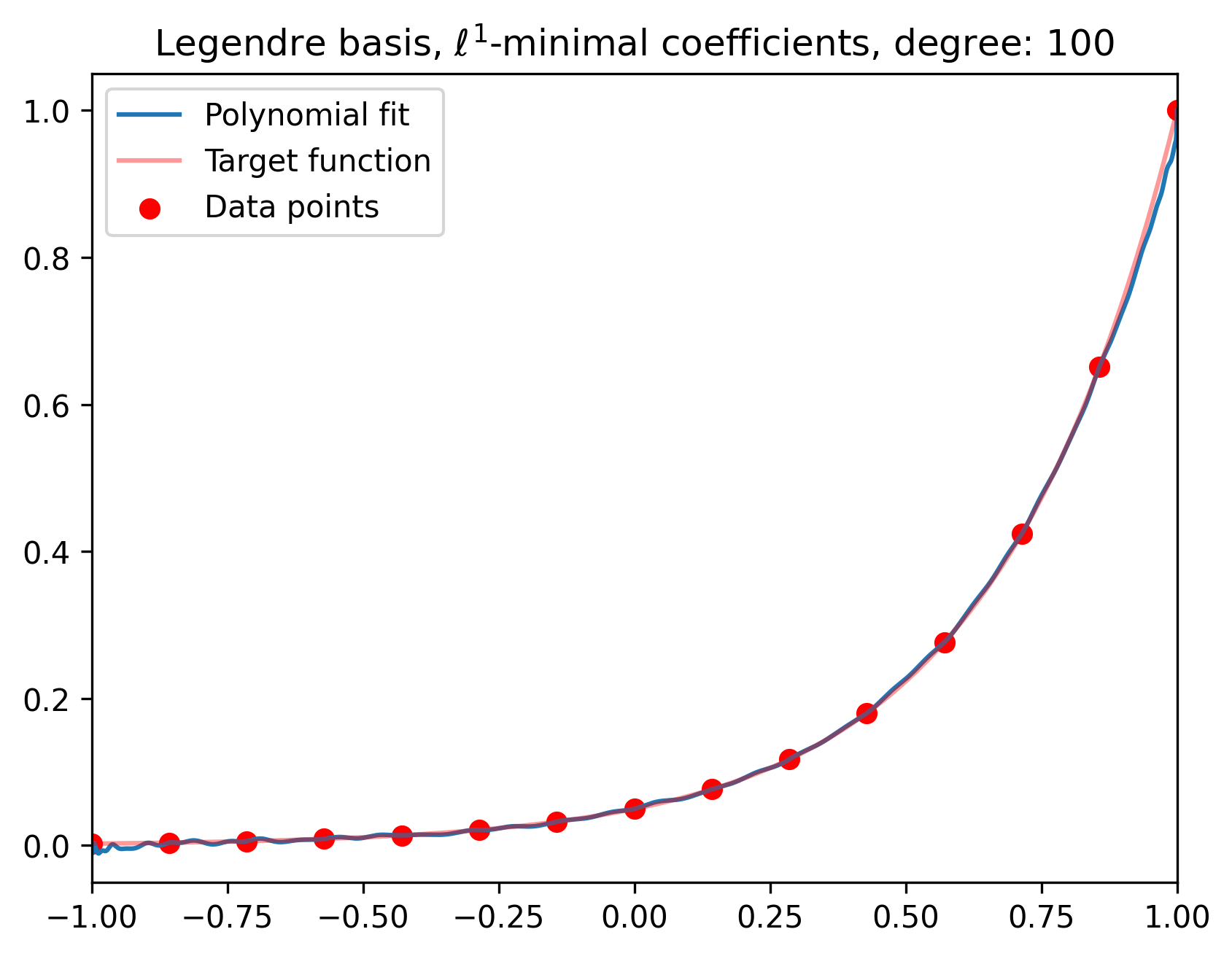}
\includegraphics[width=.32\textwidth]{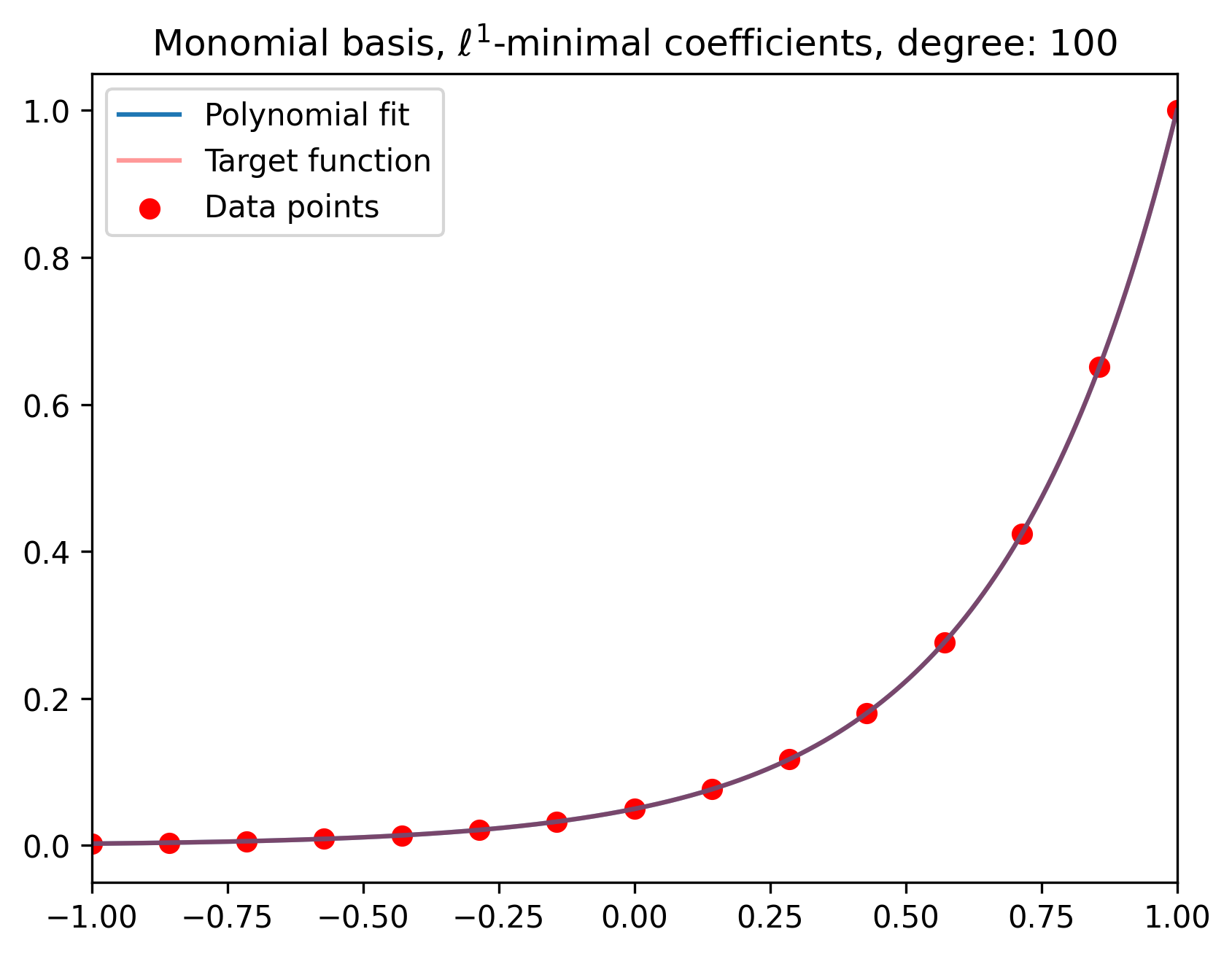}
\vspace{3mm}

\includegraphics[width=.32\textwidth]{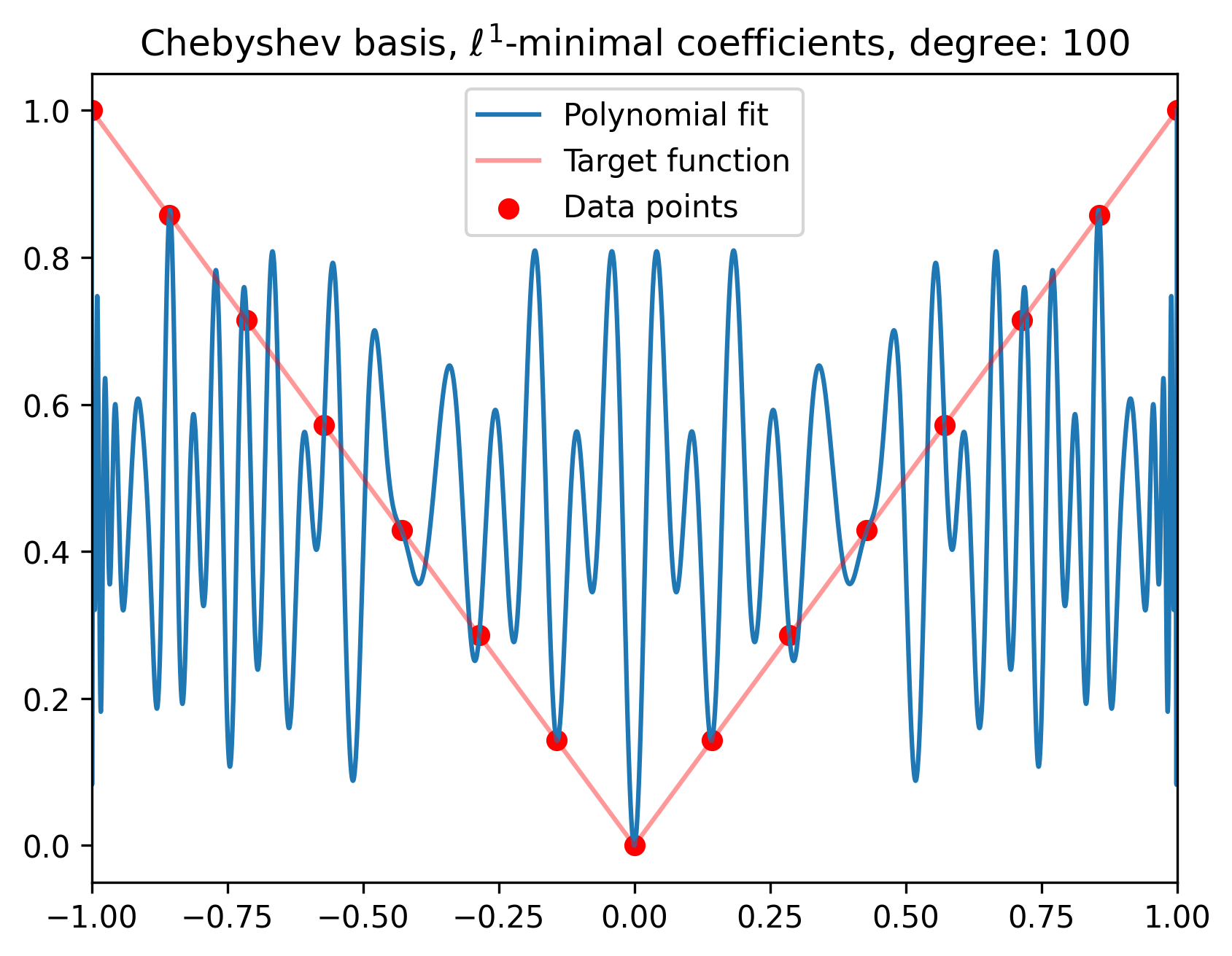}
\includegraphics[width=.32\textwidth]{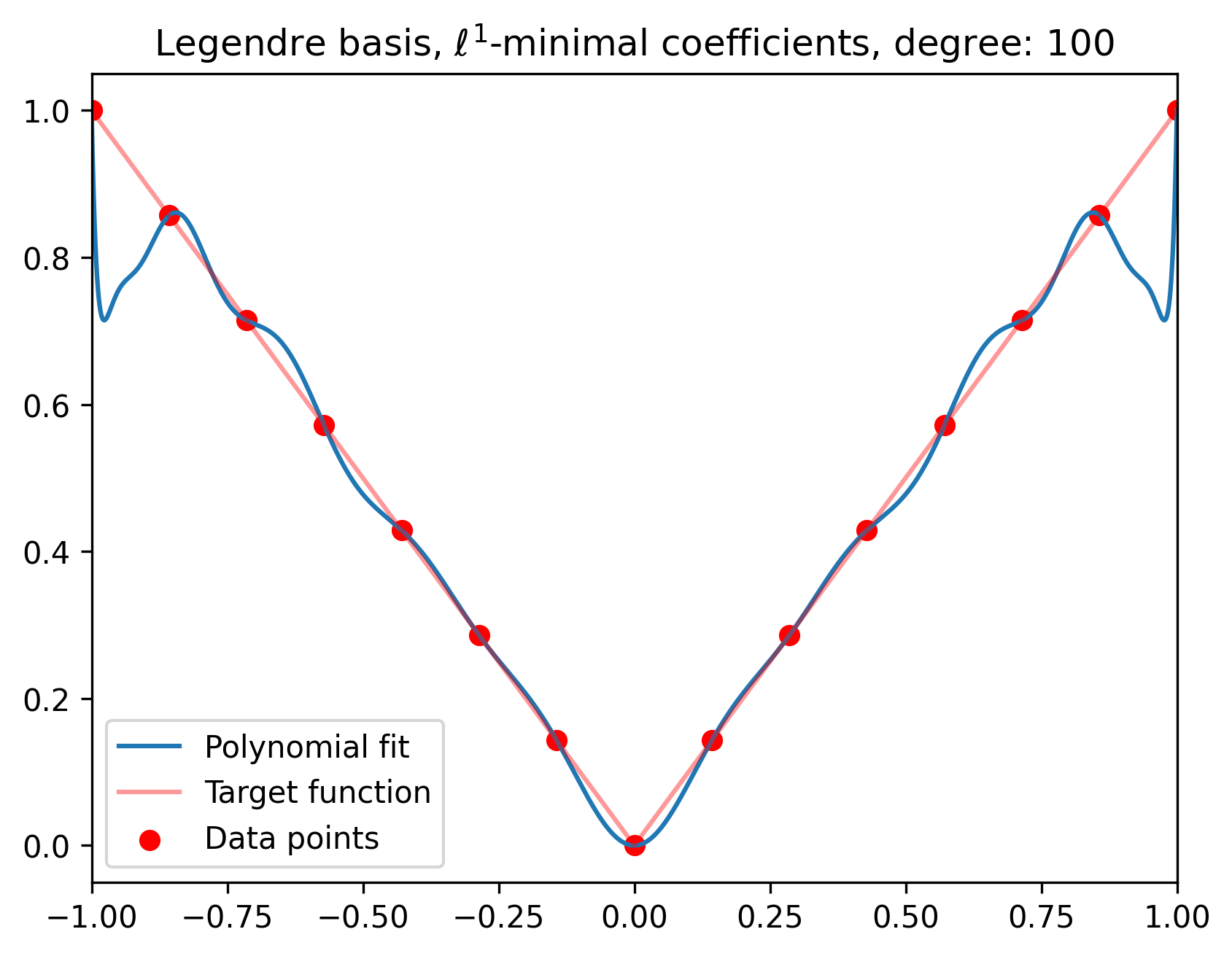}
\includegraphics[width=.32\textwidth]{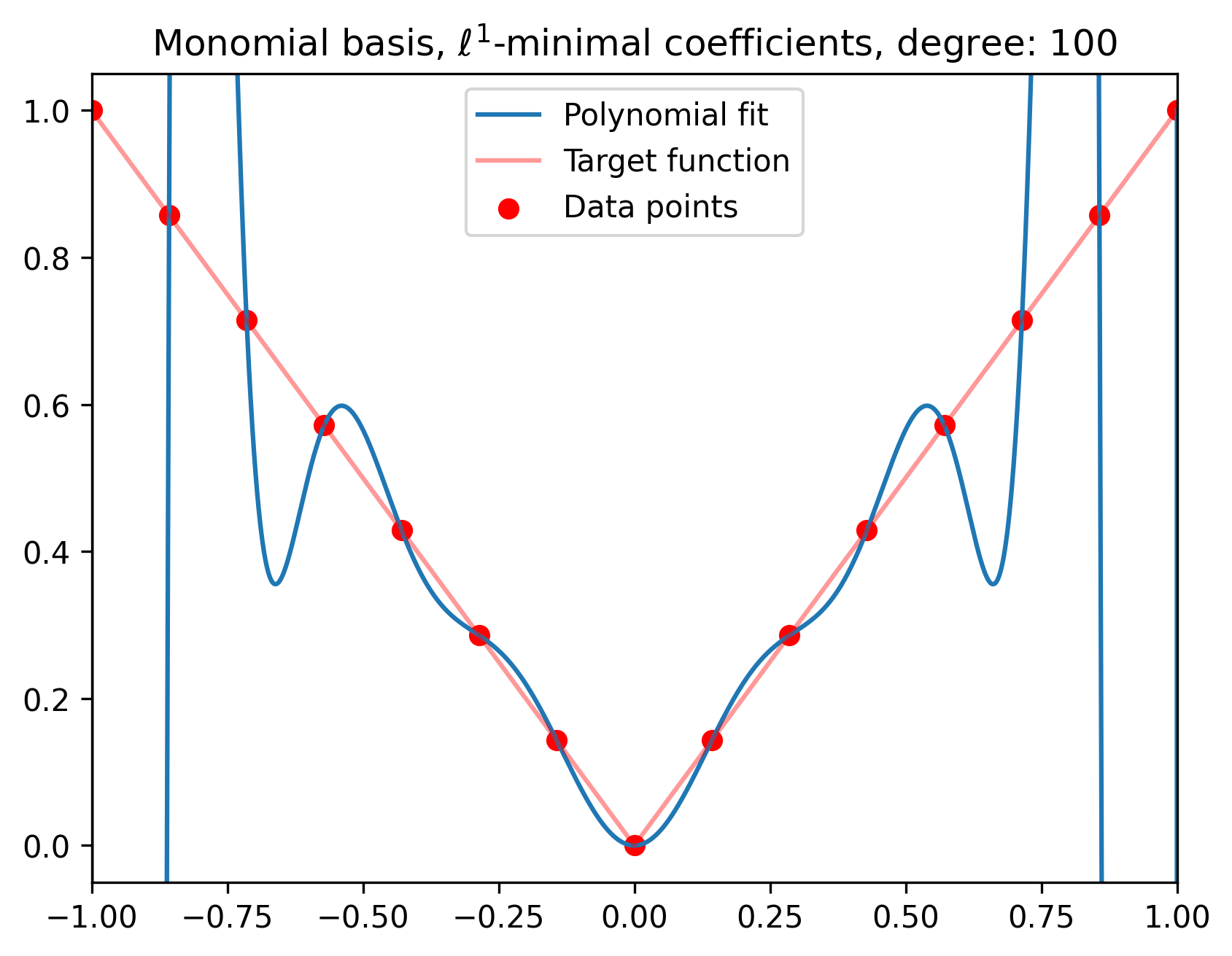}
\vspace{3mm}

\includegraphics[width=.32\textwidth]{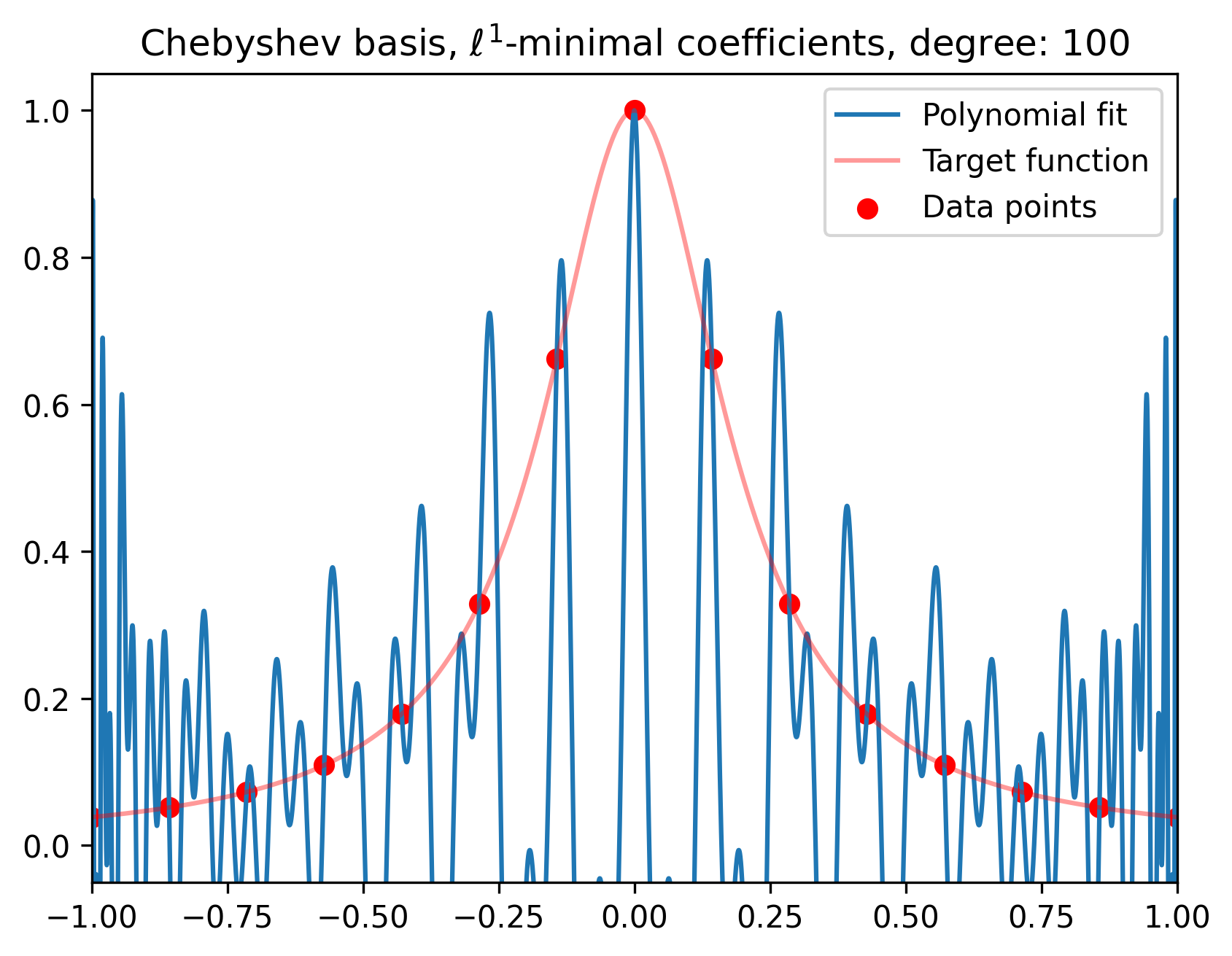}
\includegraphics[width=.32\textwidth]{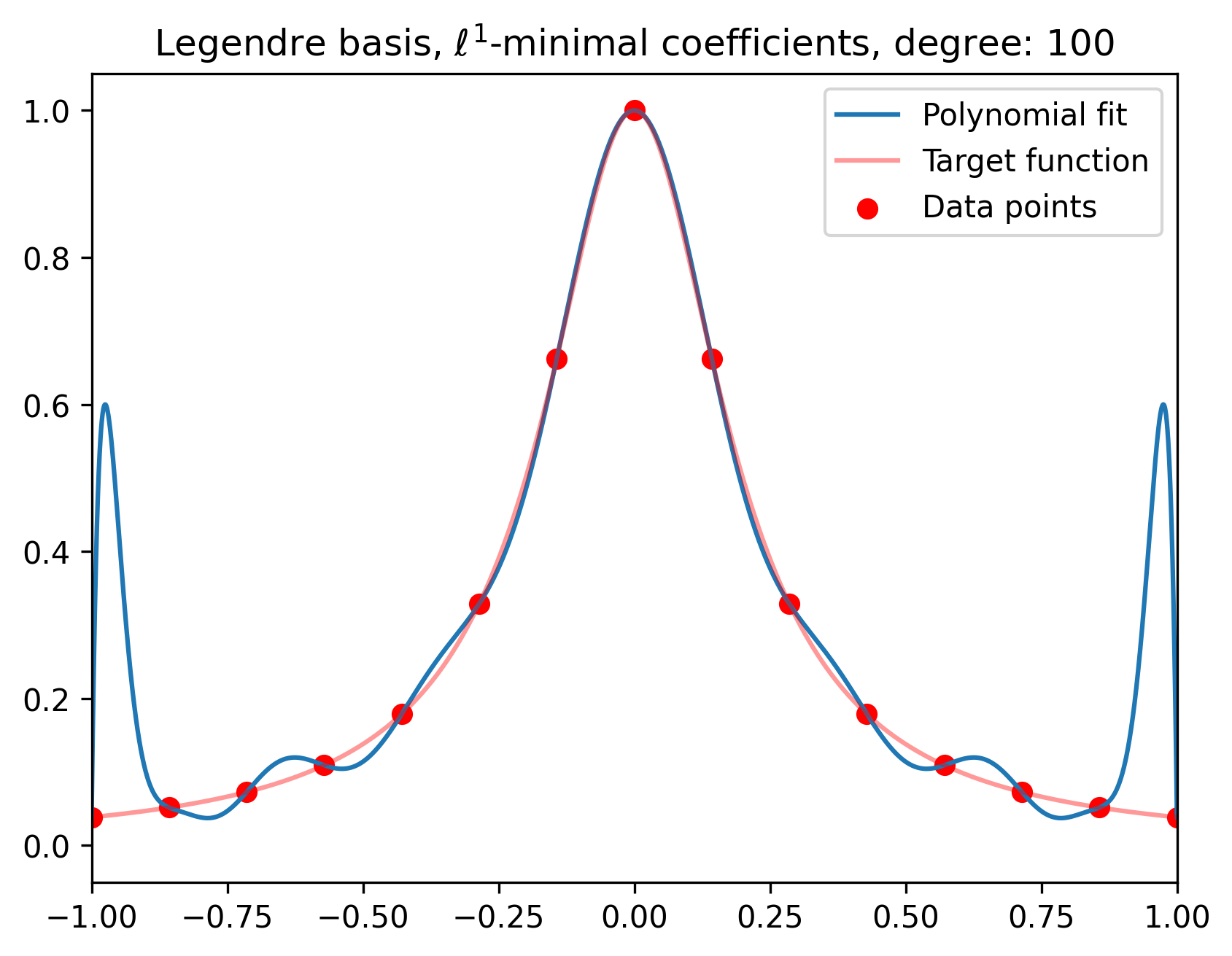}
\includegraphics[width=.32\textwidth]{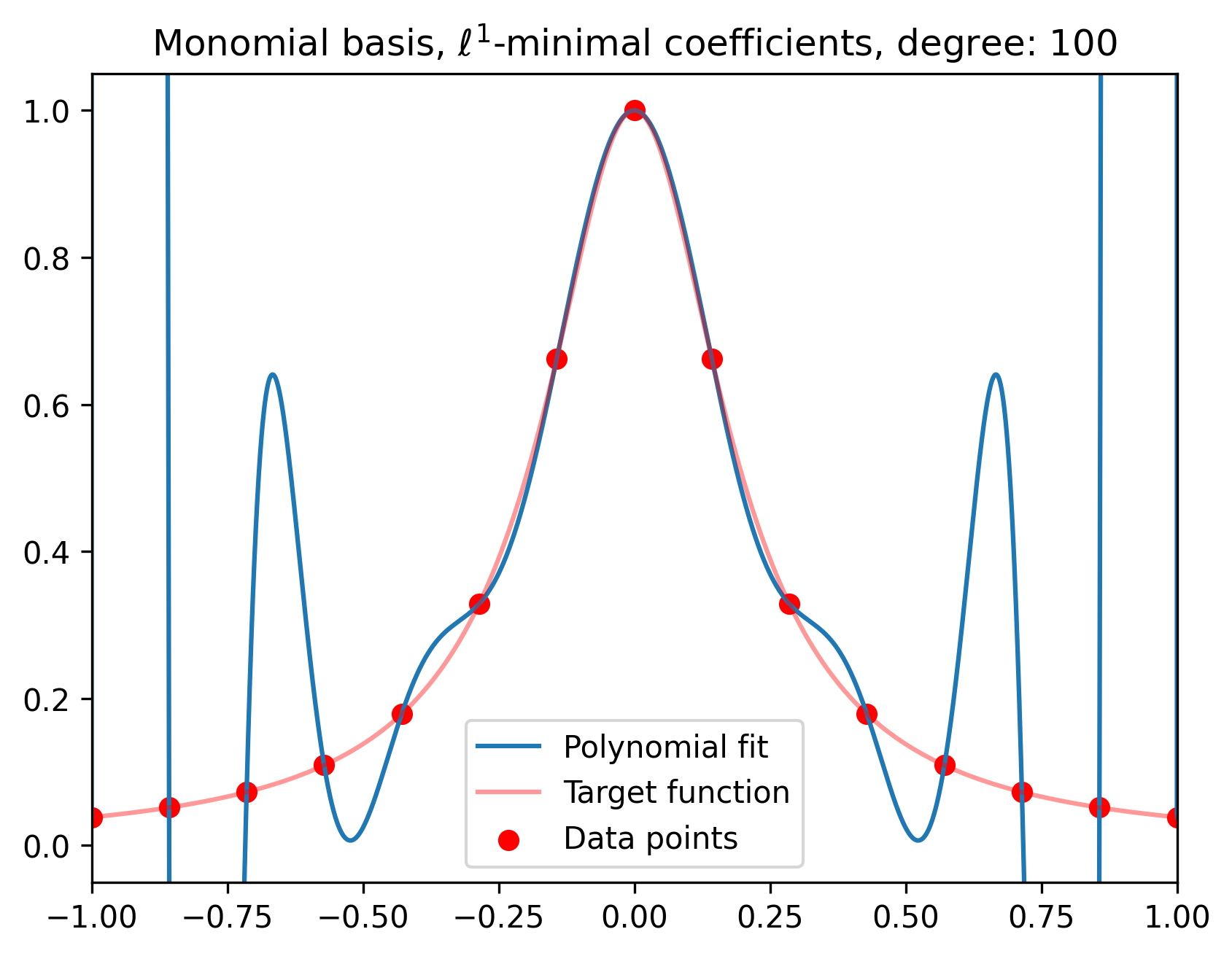}
\vspace{3mm}
    \caption{\label{figure basis comparison ell1}
    Interpolating input-output pairs $x_i, f(x_i)$ at 15 equidistant points in $[-1,1]$ by polynomials of degree $d = 100$ with $\ell^1$-minimal coefficients for various functions and bases. For higher degree as in Figure \ref{figure basis comparison}, the Chebyshev basis interpolant becomes so oscillatory as to obscure the entire picture. First row: $f(x) = \exp(3x-3)$, second row: $f(x) = |x|$, third row: $f(x) = \frac1{1+25x^2}$. First column: Chebyshev basis, second column: Legendre basis, third column: Monomial basis.\\ 
    Interpolants in Chebyshev basis are highly oscillatory. Interpolants in Legendre basis follow $f$ surprisingly closely. We do not provide an analytic justification. Interpolants in monomial basis are spectacularly good for power series with positive coefficients, but exhibit a Runge type phenomenon for the non-smooth absolute value target function and the Runge function.}
\end{figure}

\subsection{Legendre Basis: Analogy to Fractional Order Sobolev Spaces} \label{section legendre sobolev}

The previous argument was only applicable for a fixed dataset $\{x_1,\dots, x_n\}$ of `generic' points and thus could not easily be generalized to focus on equidistant points or Chebyshev nodes, or to exploring a joint regime taking $n, d_n\to \infty$ simultaneously and exploring whether e.g.\ a polynomial scaling $d_n \sim n^p$ produces different results for some $p>1$. In this section, we will explore this in more detail, but remaining on a heuristic level. Specifically, we exploit the similarity between Legendre polynomials and trigonometric functions due to \eqref{eq laplace heine} and utilize existing work on fractional Sobolev spaces to draw an analogy. For further context and background on fractional Sobolev spaces, we recommend \cite{di2012hitchhiker, leoni2023first}.

\begin{example}[Fractional Sobolev spaces]
Functions in $L^2(0,2\pi)$ can be represented uniquely as a Fourier sum $f_a(x) = \sum_{k\in \mathbb Z} a_k e^{i\, kx}$ and
\[
\|f_a\|_{L^2(0,2\pi)}^2 = 2\pi\sum_{k\in \mathbb Z} |a_k|^2, \qquad \|f_a'\|_{L^2} = 2\pi \sum_{k\in\mathbb Z} k^2|a_k|^2, \qquad \big\|f_a^{(m)}\big\|_{L^2(0,2\pi)}^2 = 2\pi \sum_{k\in\mathbb Z} |k|^{2m} a_k^2
\]
if the series converge. If the weighted series of square coefficients converges for $m\geq 1$, the functions $f$ are periodic and continuous, including at $x\in\{0,2\pi\}$. The expression
\[
\left\|\sum_{k\in\Z}a_ke^{ikx}\right\|^2_{H^m(S^1)} = \sum_{k\in\mathbb Z} \big(1+ |k|^{2m}\big)|a_k|^2
\]
defines a norm on the Sobolev space $H^m(S^1)$ of $2\pi$-periodic Sobolev functions on $\R$ with $m$ weak derivatives in $L^2$. 

These notions are generalized to Sobolev spaces of fractional order as
\[
H^s(S^1) = \left\{ f(x) = \sum_{k\in\mathbb Z} a_k\,e^{kix} : \sum_{k\in\mathbb Z} \big(1 + |k|^{2s}\big)a_k^2 < +\infty\right\}, \qquad \|f\|_{H^s(S^1)}^2 = \sum_{k\in\mathbb Z} \big(1 + |k|^{2s}\big)a_k^2 .
\]
In this scale of spaces with continuous index $s$, the regularity of functions is well-understood:
\begin{enumerate}
\item If $s>1/2$, all functions in $H^s(S^1)$ are continuous.
\item If $s< 1/2$, functions in $H^s(S^1)$ may have discontinuities. For instance, if $I\subseteq (0,2\pi)$ is an interval, the indicator function $1_I$ is an element of $H^s(S^1)$ with jump discontinuities.
\end{enumerate}
The most subtle case is $s=1/2$. Elements of $H^{1/2}(S^1)$ cannot have jump discontinuities, but they need not be bounded, let alone continuous. More precisely, a function $f:S^1\to \R$ is in $H^{1/2}(S^1)$ if and only if there exists $u\in H^1(\mathbb D)$ such that $u = f$ on $S^1 = \partial\mathbb D$ in the sense of traces. The $H^{1/2}$-norm above is equivalent to
\[
\|f\|_{H^{1/2}(S^1)}' = \inf\left\{\|u\|_{H^1(\mathbb D)} : \mathrm{tr}\,u = f\right\}.
\]
Thus, to understand the possible singularities of $f\in H^{1/2}(S^1)$, it suffices to understand the singularities of Sobolev functions in the integer order space $H^1(\mathbb D)$. The sequence
\[
u_m(x) = \begin{cases} 1 & \|x\|\leq e^{-m}\\ \frac{\log \|x\|}{m} & e^{-m} \leq \|x\|\leq 1\\ 0 &\|x\|\geq 1\end{cases} ,\qquad
 \int \|\nabla u_m\|^2\dx = \frac{2\pi} {m^2} \int_{e^{-m}}^1 \frac1{r^2}\,r\d r = \frac{2\pi}m
\]
satisfies $u_m(0) = 1$ for all $m\in\mathbb N$ and $u_m \to 0$ in $H^1(\R^2)$. As a corollary, using $U_{n,m}(x) := \sum_{i=1}^n y_i\,u_m(x-x_i)$ we see that it is possible to prescribe any values $y_i$ at $x_i \in S^1$ with arbitrarily small $H^{1/2}(S^1)$-norm. The interpolation problem
\[
\min\left\{\|u\|_{H^s(S^1)} : u(x_i) = y_i\text{ for all }i =0, \dots, n\right\}
\]
is therefore well-defined if and only if $s>1/2$.
\end{example}

The Laplace-Heine formula \eqref{eq laplace heine} indicates that
\[
\sum_{k=0}^\infty a_k \,p_k(\cos \theta) \approx \sum_{k=0}^K a_k\,p_k(\cos \theta) + \sqrt{\frac2{\pi\sin\theta}}\sum_{k=K+1}^\infty \frac{a_k}{\sqrt k}\,\cos\left(\left(k+\frac12\right)\theta - \frac\pi 4\right).
\]
The cosine series behaves similarly to a complex Fourier series with basis functions oscillating at analogous frequency $\sim k$. 
For $\theta\approx \pi/2$, the factor $1/\sin\theta$ is a smooth function and the change of variables $\theta\mapsto\cos \theta$ is diffeomorphic, so a reasonable hypothesis is that the smoothness $s$ of $\sum_k a_k p_k(x)$ in $[-r,r]$ for $r<1$ is governed by the summability of
\[
\sum_{k=1}^\infty (1+k^{2s}) \left(\frac {a_k}{\sqrt k}\right)^2 \leq 2 \sum_{k=0}^\infty k^{2s-1}\,a_k^2.
\]
In our setting, this weighted sum of squares is only guaranteed to be finite for $s\leq 1/2$, i.e.\ in the regime where continuity is not guaranteed. We conjecture that 
\[
\left\|\sum_{k=0}^\infty a_k p_k\right\| := \left(\sum_{k=0}^\infty a_k^2\right)^{1/2}
\]
defines a norm which {\em does not} control the $C^0$-norm. In particular, we that the interpolation problem is {\em not} well-defined in the setting of Legendre series and that the polynomial interpolation problem with basis coefficient $\ell^2$-norm regularization degenerates as the degree approaches infinity and converges to the zero function, i.e.\ the unique function with norm zero.

In the companion article \cite{second_article}, we explore a fractional Sobolev space perspective for a rescaled version of the Chebyshev basis, which is somewhat easier to analyze.

\section*{Acknowledgements}

JW's work was partially supported by a Brackenridge Summer Fellowship of the David C.\ Frederick Honors College at the University of Pittsburgh.
Neil Slavishak contributed to early numerical experiments in the context of this work as an undergraduate researcher.
SW is grateful to Rishi Sonthalia and Guido Mont\'ufar for insightful conversations on polynomial interpolation and double descent.

\bibliographystyle{alpha}
\bibliography{bibliography.bib}

\end{document}